\documentclass[11pt, twoside]{article}

\usepackage[english]{babel}

\usepackage{amssymb}
\usepackage{mathrsfs}
\usepackage{amsmath}
\usepackage{amsthm}
\usepackage{amsfonts}
\usepackage{latexsym}
\usepackage{indentfirst}
\usepackage{color}
\usepackage{txfonts}
\usepackage{enumerate}

\usepackage[colorlinks=true,
linkcolor=blue,
citecolor=red,
urlcolor=magenta,
backref=page
]{hyperref}

\usepackage{txfonts}

\usepackage{anysize}

\allowdisplaybreaks

\newtheorem{theorem}{Theorem}[section]
\newtheorem{lemma}[theorem]{Lemma}

\newtheorem{proposition}[theorem]{Proposition}

\theoremstyle{definition}
\newtheorem{remark}[theorem]{Remark}

\newtheorem{definition}[theorem]{Definition}
\newcounter{assum}

\renewcommand{\appendix}{\par
\setcounter{section}{0}%
\setcounter{subsection}{0}%
\setcounter{subsubsection}{0}%
\gdef\thesection{\@Alph\c@section}%
\gdef\thesubsection{\@Alph\c@section.\@arabic\c@subsection}%
\gdef\theHsection{\@Alph\c@section.}%
\gdef\theHsubsection{\@Alph\c@section.\@arabic\c@subsection}%
\csname appendixmore\endcsname
}

\numberwithin{equation}{section}

\begin{document}
\title{\bf\Large
Compactness and Its Applications of Sobolev Spaces Associated 
with Ball Banach Function Spaces\footnotetext{\hspace{-0.35cm} 2020 {\it
Mathematics Subject Classification}.
Primary 46E35; Secondary 26D10, 42B25, 42B35, 35R11.
\endgraf {\it Key words and phrases.}
ball Banach function space, Sobolev space, compactness,
Poincar\'e inequality, well-posedness.}}
\author{Xiaosheng Lin, Dachun Yang\footnote{Corresponding
author, E-mail: \texttt{dcyang@bnu.edu.cn}/{\color{red}\today}/Final version.},\  \ Wen Yuan
and Yangyang Zhang}
\date{}
\maketitle

\vspace{-0.7cm}

\begin{center}
\begin{minipage}{13cm}
{
\small {\bf Abstract.}\quad
Let $N\in\mathbb{N}\cap[2,\infty),$
$\Omega$ be a bounded Lipschitz domain in $\mathbb{R}^N$,
and $X(\Omega)$ be a ball Banach function space on $\Omega.$
In this article, under some mild assumptions, we establish
a compactness theorem for Sobolev spaces associated with
$X(\Omega)$. Different from the fractional Sobolev space,
our proof is based on
an elaborate
 decomposition of bounded Lipschitz
domains and its corresponding weighted fractional Poincar\'e
inequality on each piece.
As applications, we obtain the fractional
Poincar\'e inequality in $X(\Omega)$
that, for any $s\in (s_0,1)$ and $f\in X(\Omega)$,
\begin{align*}
\|f-f_\Omega\|_{X(\Omega)}
\lesssim(1-s)^{\frac 1q}
\left\|\left[\int_\Omega
\frac{|f(\cdot)-f(y)|^q}{|\cdot-y|^{N+sq}}\,dy
\right]^{\frac{1}{q}}
\right\|_{X(\Omega)},
\end{align*}
where $s_0$ is a given positive constant and the implicit positive
constant is independent of $s$ and $f$.
Using this, we further establish the well-posedness of a weighted
Triebel--Lizorkin type nonlocal variational problem.
These results are of wide generality and, even when they
are applied to Morrey spaces,
weighted Lebesgue spaces,
mixed-norm Lebesgue spaces, variable Lebesgue spaces, Orlicz spaces, and
Orlicz-slice spaces, the obtained results are also new.}
\end{minipage}
\end{center}

\vspace{0.1cm}

\tableofcontents

\section{Introduction}
Let $N\in\mathbb{N}\cap [2,\infty)$ and
$\Omega\subset\mathbb{R}^{N}$ be a bounded Lipschitz domain.
Recall that, for any given $p\in[1,\infty)$,
the \emph{Sobolev space} $W^{1,p}(\Omega)$
is defined by setting
\begin{align*}
W^{1,p}(\Omega):=\left\{f\in L^p(\Omega):\ |\nabla f|\in L^p(\Omega)\right\},
\end{align*}
where $\nabla f$ denotes the \emph{weak gradient} of $f$.
Let $\{\rho_n\}_{n\in\mathbb{N}}$ be a sequence of radial functions on $\mathbb{R}^{N}$
such  that, for any $n\in\mathbb{N}$ and $\delta\in (0,\infty)$,
$\rho_n\geq 0$,
\begin{align}\label{wuhu}
\int_{\mathbb{R}^{N}}\rho_{n}(|h|)\,dh=1,
\ \ \text{and}\ \ \lim_{n\to\infty}\int_{|h|>\delta}\rho_n(|h|)\,dh=0.
\end{align}
Let $p\in[1,\infty)$. In the seminal articles \cite{bbm01, b02}, Bourgain, Brezis, and Mironescu
showed that,
for any $f\in W^{1,p}$,
\begin{align*}
\lim_{n\to\infty}\int_{\Omega}\int_{\Omega}\frac{|f(x)-f(y)|^p}{|x-y|^p}\rho_n(x-y)\,dx\,dy
=K(p,N)\|\,|\nabla f|\,\|_{L^p(\Omega)}^p,
\end{align*}
where
\begin{align}\label{kchangshu}
K(p,N)
:=
\int_{S^{N-1}}|\omega\cdot e|^p\,d\sigma(\omega).
\end{align}
Since then, this result, now called the
Bourgain--Brezis--Mironescu (BBM) formula,
has attracted a lot of attention; see, for example,
\cite{bn16,dlyyz22,dm22,dm23,dssvy}.
In spirit of this formula,
Ponce \cite{p04} established a powerful
compactness result on Sobolev spaces. To be precise,
let $\{f_n\}_{n\in\mathbb{N}}\subset L^p(\Omega)$ be a bounded sequence such that
\begin{align}\label{haqi}
\sup_{n\in\mathbb{N}}\int_{\Omega}\int_{\Omega}
\frac{|f_n(x)-f_n(y)|^p}{|x-y|^p}\rho_n(x-y)\,dx\,dy<\infty.
\end{align}
Ponce \cite[Theorem 1.2]{p04} showed  that $\{f_n\}_{n\in\mathbb{N}}$
is relatively compact in $L^p(\Omega).$
Using this Ponce \cite[Theorem 1.1]{p04} obtained an improved
Poincar\'e inequality.

The key point in the proof of \cite[Theorem 1.2]{p04}
is the handling of the boundary. After
obtaining local compactness in the whole space $\mathbb{R}^{N}$, Ponce
established an $L^p$-estimate near the boundary of $\Omega.$
Roughly speaking,
this estimate shows that the
$L^p$ norm over the entire domain $\Omega$ can be controlled by both the $L^p$ norm on a
slightly smaller interior subdomain and the BBM norm
\eqref{haqi}. This estimate
then  allows one to extend
interior compactness to  compactness
on the whole bounded Lipschitz domain.
However, its proof  relies heavily on specific properties
of $L^p$ spaces, in particular polar decomposition and translation
invariance, which are not available in general function spaces.

Recall that the ball quasi-Banach function space
was introduced by Sawano et al. \cite{shyy} in order to
handle, in a unified way, many important function spaces arising in harmonic
analysis and partial differential equations, such as Morrey spaces,
Orlicz spaces, and variable Lebesgue spaces. It is worth pointing
out that this framework is particularly flexible, since it also contains several
spaces which are not covered by the classical Banach function spaces of
Bennett and Sharpley \cite{bs88}.

Let $q\in[1,\infty)$,
$R\in (\mathrm{diam}\,(\Omega),\infty)$, and $\{s_n\}_{n\in\mathbb{N}}\subset (0,1)$
be such that $\lim_{n\to\infty}s_n=1.$ For any $n\in\mathbb{N}$ and $h\in\mathbb{R}^{N},$ let
\begin{align}\label{tesuhe}
\rho_n(h):= \frac{q(1-s_n)}{\sigma(S^{N-1}) R^{q(1-s_n)}} |h|^{q(1-s_n)-N} \mathbf{1}_{\{|h|<R\}}.
\end{align}
It is easy to verify that $\{\rho_n\}_{n\in\mathbb{N}}$ satisfy \eqref{wuhu}.
The main purpose of this article is to establish the following compactness
theorem of ball Banach function spaces for the  kernels $\{\rho_n\}_{n\in\mathbb{N}}$
in \eqref{tesuhe}, whose proof is given in Subsection \ref{sec2.4}.
(In the remainder of this introduction, we need to use some related notation and concepts
from Subsection \ref{sec2.1}.)

\begin{theorem}\label{thm-domain}
Let $N\in\mathbb{N}\cap [2,\infty)$ and $\Omega\subset\mathbb R^N$
be a bounded Lipschitz domain, and let $q,r\in[1,\infty)$.
Let $X$ and $X^{\frac{1}{r}}$ be a Ball Banach function space.
Assume that $X'$ has an
absolutely continuous norm and the Hardy--Littlewood
maximal operator $\mathcal{M}$ is bounded on $(X^{\frac{1}{r}})'$.
Let $\{f_n\}_{n\in\mathbb N}\subset X(\Omega)$
and $\{s_n\}_{n\in\mathbb{N}}\subset (0,1)$
satisfy $\lim_{n\to\infty}s_n=1$,
$\sup_{n\in\mathbb N}\|f_n\|_{X(\Omega)}<\infty,$
and
\begin{equation}\label{youjie2}
\sup_{n\in\mathbb{N}}(1-s_{n})^{\frac{1}{q}}\left\|
\left[
\int_\Omega
\frac{|f_{n}(\cdot)-f_{n}(y)|^q}{|\cdot-y|^{N+s_{n}q}}\,dy
\right]^{\frac1q}
\right\|_{X(\Omega)}<\infty.
\end{equation}
Then $\{f_n\}_{n\in\mathbb{N}}$ is relatively compact in $X(\Omega)$. Moreover, if $f_{n_k}\to f$ in $X(\Omega)$ as $k\to\infty$, then $f\in W^{1,X}(\Omega)$ and, for any $j\in\{1,\ldots,N\},$
\begin{align*}
\sum_{j=1}^N\left\|\partial^j f\right\|_{X(\Omega)}
\leq
B_{q,N}\varliminf_{k\to\infty}
(1-s_{n_k})^{\frac{1}{q}}\left\|
\left[
\int_\Omega
\frac{|f_{n_k}(\cdot)-f_{n_k}(y)|^q}{|\cdot-y|^{N+s_{n_k}q}}\,dy
\right]^{\frac1q}
\right\|_{X(\Omega)},
\end{align*}
where
\begin{align}\label{e1.6}
B_{q,N}:=\frac{2N[\sigma(S^{N-1})]^{1-\frac{1}{q}}q^{\frac{1}{q}}}{K(1,N)}.
\end{align}
\end{theorem}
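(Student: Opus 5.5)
Let $A_n(f):=(1-s_n)^{1/q}\|[\int_\Omega|f(\cdot)-f(y)|^q|\cdot-y|^{-N-s_nq}\,dy]^{1/q}\|_{X(\Omega)}$. The plan follows Ponce's two-step scheme: interior compactness, then control near the boundary, with every $L^p$-specific step replaced by duality and the maximal operator. The standing hypotheses are used as follows. Because $X'$ has an absolutely continuous norm, $X(\Omega)$ is the dual of $X'(\Omega)$ (Lorentz--Luxemburg), so $\|g\|_{X(\Omega)}=\sup\{\int_\Omega|g\phi|:\|\phi\|_{X'(\Omega)}\le1\}$ with $\phi$ ranging over a dense set. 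Because $\mathcal M$ is bounded on $(X^{1/r})'$, a Rubio de Francia iteration
\[
R\phi:=\sum_{k\ge0}\mathcal M^k\phi/(2\|\mathcal M\|)^k
\]
produces, from any $\phi\in(X^{1/r})'$, an $A_1$ weight $w\ge|\phi|$ with $[w]_{A_1}$ bounded. This lets me reduce every estimate to weighted $L^r(w)$ (or $L^1(w)$) estimates with $A_1$ weights, uniformly in the weight constant.

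\textbf{Step 1 (interior equicontinuity).} Let $\Omega'\Subset\Omega$ and let $\eta_\varepsilon$ be a standard mollifier. I aim to show that
\[
\|f_n-f_n*\eta_\varepsilon\|_{X(\Omega')}\le C\,\omega(\varepsilon)\,A_n(f_n)+o_n(1),
\]
with $\omega(\varepsilon)\to0$. Pointwise, $|f-f*\eta_\varepsilon(x)|\le\fint_{B(x,\varepsilon)}|f(x)-f(y)|\,dy$. By Hölder, this is bounded by $\varepsilon^{s_n}$ times $[\int_{B(x,\varepsilon)}|f(x)-f(y)|^q|x-y|^{-N-s_nq}\,dy]^{1/q}$, up to a normalizing factor. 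That factor does not by itself produce the $(1-s_n)^{1/q}$ we need. To recover it I will follow the BBM/Ponce averaging trick: compare the difference at scale $\varepsilon$ with differences at all smaller scales via a telescoping or dyadic argument, where the weight $|h|^{q(1-s_n)-N}$ in \eqref{tesuhe} spreads its mass so that scale $\varepsilon$ captures a fraction comparable to $\varepsilon^{q(1-s_n)}$.

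Concretely, I would prove the following weighted inequality for $w\in A_1$ on balls (or cubes) $Q$:
\[
\int_Q|f-f_Q|^rw\le C[w]_{A_1}\,\ell(Q)^{r}\int_Q\left[(1-s)\int_Q\frac{|f(x)-f(y)|^q}{|x-y|^{N+sq}}dy\right]^{r/q}w(x)\,dx .
\]
This is the weighted fractional Poincaré inequality on pieces mentioned in the abstract. Its proof would go through a Bourgain--Brezis--Mironescu style chaining over dyadic subcubes, using $w\in A_1$ to move averages onto the weight. Combining it with duality and the $r$-convexification yields
\[
\|f_n-(f_n)_{Q}\|_{X(Q)}\lesssim \ell(Q)\,A_n(f_n)
\]
on each small cube. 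Together with $\sup_n\|f_n\|_X<\infty$, this gives total boundedness: approximate $f_n$ by its piecewise-constant averages on a fine cube grid, a finite-dimensional bounded family. Passing from smallness on each cube to smallness of the whole $X(\Omega')$ norm requires summing the pieces. Here I would dualize with $\phi=\sum_Q\phi\mathbf 1_Q$ and apply the weighted inequality cube by cube, which avoids any need for lattice additivity of $X$.

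\textbf{Step 2 (boundary).} This is the main obstacle, since Ponce's translation argument is unavailable. I would decompose the Lipschitz domain $\Omega$ into finitely many pieces: an interior part together with Lipschitz-graph cylinders near $\partial\Omega$. Each boundary cylinder is then decomposed Whitney-style into cubes $Q$ whose size is comparable to their distance to the boundary and which are stacked along the graph direction. Applying the weighted Poincaré inequality on unions of neighbouring cubes (chains reaching into the fixed interior cube) should show
\[
\|f_n\|_{X(\Omega\setminus\Omega_\delta)}\lesssim \|f_n\|_{X(\Omega_\delta)}\,\theta(\delta)+\delta A_n(f_n),
\]
where $\theta(\delta)\to0$. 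Combined with Step 1 on $\Omega_\delta$, this gives relative compactness in $X(\Omega)$.

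\textbf{Step 3 (limit and gradient bound).} Suppose $f_{n_k}\to f$ in $X(\Omega)$. For $\varphi\in C_c^\infty(\Omega)$ and a test function $g\in X'$ with $\|g\|_{X'}\le1$, I would rewrite $\int f\,\partial^j\varphi\, g$-type pairings through difference quotients. Using the BBM identity, with the constant $K(1,N)$ arising from averaging $|\omega\cdot e_j|$ over the sphere, Hölder in $y$ yields the factor $[\sigma(S^{N-1})]^{1-1/q}q^{1/q}$. Lower semicontinuity of the $X$-norm (Fatou property) then gives
\[
\|\partial^jf\|_X\le\tfrac{2[\sigma]^{1-1/q}q^{1/q}}{K(1,N)}\varliminf_k A_{n_k}(f_{n_k})
\]
for each $j$. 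Summing over $j$ produces the factor $N$ in $B_{q,N}$, and $f\in W^{1,X}(\Omega)$ follows.
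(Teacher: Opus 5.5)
\textbf{The gap is Step 2.} You correctly identify it as the main obstacle, but then you only assert it (``should show''). Nothing in your sketch produces $\theta(\delta)\to0$, and the Whitney mechanism does not fit your compactness argument. Whitney cubes accumulate at $\partial\Omega$, so no finite subfamily can be used in a finite-rank approximation. You are therefore forced to prove a separate uniform smallness of $\|f_n\mathbf 1_{\Omega\setminus\Omega_\delta}\|_X$.

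Ponce gets this in $L^p$ by absorbing and iterating across boundary layers. That uses additivity of $\int|f|^p$ over disjoint sets, and $\|\cdot\|_X$ has no such additivity. To replace it you would have to fix one weight, for instance a Rubio de Francia $A_1$ majorant $w$ of $g\in(X^{1/r})'$, and prove a layer inequality for the additive quantity $\int|f_n|^r w$. Smallness would then come either from iterating in that quantity or from the $A_\infty$ smallness of $w$ on thin layers. Note also that chains into a fixed interior cube pass through cubes of size comparable to that cube. Their oscillations are of order $A_n(f_n)$, not $\delta A_n(f_n)$, so your claimed error term is not what such a chain argument delivers. None of this is carried out, so the heart of the theorem is left unproved.

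\textbf{The missing idea: no separate boundary estimate is needed.} Lemma \ref{lem:decom} covers $\Omega$ by finitely many sets $U_{m,\alpha}\subset\Omega$ of size $\sim\ell_m$ with bounded overlap. These are axis-parallel interior cubes, together with boundary pieces $T^{-1}\Phi(Q'\times(0,\ell_m/2))$, $\Phi(x',t)=(x',t+\gamma(x')+c)$, stacked over the local Lipschitz graph; they reach $\partial\Omega$ but stay inside $\Omega$. On these pieces the weighted fractional Poincar\'e inequality holds with constants depending only on the Lipschitz constant. It is stated for arbitrary $w\ge0$ with $\mathcal M(w\mathbf 1_U)$ on the right, so no $A_1$ reduction is needed. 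Even reflection across the flat face turns the half-cube into a cube (Lemma \ref{poincare2}). The map $T\circ\Phi$ has Jacobian $1$ and is $(1+L)$-bi-Lipschitz, so it transports both $\mathcal D^{\delta}_{q}$ and $\mathcal M(w\mathbf 1_U)$ up to constants (Lemma \ref{poincare3}).

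With that, your Step 1 argument runs on all of $\Omega$ at once: average over the pieces, dualize against $g\in(X^{1/r})'$, sum piece by piece, and use boundedness of $\mathcal M$ on $(X^{1/r})'$. This gives $\|f_n-E_mf_n\|_{X(\Omega)}\lesssim(1-s_n)^{1/q}\ell_m^{1/2}\|\mathcal D^{s_n}_{q,\Omega}f_n\|_{X(\Omega)}$ with $E_m$ of finite rank.

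\textbf{Two smaller corrections.}
\begin{enumerate}
\item By scaling, your weighted inequality must carry $\ell(Q)^{sr}$, not $\ell(Q)^{r}$. This is harmless once $s_n>1/2$ and $\ell(Q)<1$.
\item In Step 3, the Fatou property alone does not produce $\partial^jf\in X$. You need $|\int_\Omega f\,\partial_j\varphi|\le c\,\|\varphi\|_{X'}$, with $\varphi$ itself as the $X'$ test function. Then apply Hahn--Banach and $(X')^*=X''=X$ (Lemmas \ref{xingxing} and \ref{xduiou}) to represent the functional by some $g_j\in X$ with $\|g_j\|_X\le c$.
\end{enumerate}
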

Such an extension is far from straightforward. Indeed, in general ball
Banach function spaces, neither rotation invariance nor translation
invariance is available, while both properties are essentially used in
the proof of \cite[Theorem 1.2]{p04}. To avoid these, we first carefully decompose
the domain $\Omega$ into finitely many
interior cubes and boundary pieces which, after a rigid motion, become
Lipschitz images of half-cubes (see Lemma \ref{lem:decom}). On these local pieces,
we establish a weighted fractional Poincar\'e inequality
by means of sparse domination. Applying these, via introducing
a finite-rank averaging operator
which consists of the averages over the pieces of
the aforementioned
decomposition, yields the desired compactness.

Theorem \ref{thm-domain} has the following two applications.

\textbf{Application (I): Improved fractional Poincar\'e
inequality on bounded Lipschitz domains.}
In recent years,
(fractional) Poincar\'e inequalities have attracted considerable attention,
especially in weighted Lebesgue spaces and
doubling metric measure spaces;
see, for example, \cite{fpw98,hmpv23,hk25,kklv19,klv21,klvz19,
lw26arxiv,mp98,mpw24}.
Applying Theorem \ref{thm-domain}, we establish a fractional
Poincar\'e inequality in ball Banach function spaces on bounded Lipschitz domains,
which is stronger than the first-order Poincar\'e inequality.
Applying this to weighted Lebesgue spaces, we further derive a weighted
fractional Poincar\'e inequality on bounded Lipschitz domains.
\begin{theorem}\label{thm:frac-poincare}
Let $N\in\mathbb{N}\cap [2,\infty)$, $\Omega\subset\mathbb R^N$ be a bounded
Lipschitz domain, and  $X$ be a Ball Banach function space on $\mathbb{R}^{N}$.
Let $q,r\in[1,\infty)$. Assume that $X'$ has an
absolutely continuous norm and the Hardy--Littlewood maximal
operator $\mathcal{M}$ is bounded on $(X^{\frac{1}{r}})'$.
Let $C_{X,\Omega}$ be as in Proposition \ref{x-poincare} and
$D\in (C_{X,\Omega}B_{q,N},\infty)$ with $B_{q,N}$ as in \eqref{e1.6}.
Then there exists $s_0\in(0,1)$, depending on $D$,
such that, for any $s\in (s_0,1)$ and $f\in X(\Omega)$,
\begin{equation}\label{eq:fractional}
\|f-f_\Omega\|_{X(\Omega)}
\leq D(1-s)^{\frac 1q}\left\|
\left[\int_\Omega
\frac{|f(\cdot)-f(y)|^q}{|\cdot-y|^{N+sq}}\,dy
\right]^{\frac1q}
\right\|_{X(\Omega)}.
\end{equation}
\end{theorem}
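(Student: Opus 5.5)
The plan is a compactness--contradiction argument in the spirit of Ponce \cite[Theorem 1.1]{p04}, with Theorem \ref{thm-domain} as the compactness input and Proposition \ref{x-poincare} (the first-order Poincar\'e inequality $\|g-g_\Omega\|_{X(\Omega)}\le C_{X,\Omega}\sum_{j=1}^N\|\partial^j g\|_{X(\Omega)}$ for $g\in W^{1,X}(\Omega)$) as the limiting inequality.

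Suppose the conclusion fails for the given $D>C_{X,\Omega}B_{q,N}$. Then there are $s_n\in(0,1)$ with $s_n\to1$ and $f_n\in X(\Omega)$ such that \eqref{eq:fractional} fails for $(s_n,f_n)$. Since $\Omega$ is bounded and $X$ is a ball Banach function space, $\mathbf 1_\Omega\in X$. Moreover, the absolute continuity of the norm of $X'$ together with the boundedness of $\mathcal M$ gives $X(\Omega)\hookrightarrow L^1(\Omega)$, so $f_\Omega$ is well defined and $f\mapsto f_\Omega$ is bounded on $X(\Omega)$. Replace $f_n$ by $g_n:=(f_n-(f_n)_\Omega)/\|f_n-(f_n)_\Omega\|_{X(\Omega)}$; the denominator is nonzero, since otherwise \eqref{eq:fractional} would hold trivially. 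The seminorm on the right is invariant under adding constants and homogeneous, so $(g_n)_\Omega=0$, $\|g_n\|_{X(\Omega)}=1$, and
\[
(1-s_n)^{\frac1q}\left\|\left[\int_\Omega\frac{|g_n(\cdot)-g_n(y)|^q}{|\cdot-y|^{N+s_nq}}\,dy\right]^{\frac1q}\right\|_{X(\Omega)}<\frac1D .
\]
Theorem \ref{thm-domain} therefore applies. It gives a subsequence $g_{n_k}\to g$ in $X(\Omega)$ with $g\in W^{1,X}(\Omega)$ and
\[
\sum_{j=1}^N\|\partial^jg\|_{X(\Omega)}\le \frac{B_{q,N}}{D}.
\]
Convergence in $X(\Omega)$ gives $\|g\|_{X(\Omega)}=1$, and by the $L^1$ embedding $g_\Omega=\lim_k(g_{n_k})_\Omega=0$. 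Proposition \ref{x-poincare} then yields
\[
1=\|g-g_\Omega\|_{X(\Omega)}\le C_{X,\Omega}\frac{B_{q,N}}{D}<1,
\]
a contradiction. Hence there is $s_0\in(0,1)$ such that \eqref{eq:fractional} holds for all $s\in(s_0,1)$ and all $f\in X(\Omega)$.

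The main obstacle is justifying continuity of the mean, $(g_{n_k})_\Omega\to g_\Omega$. I would obtain this from Lorentz--Luxemburg duality, namely $X$ being a ball Banach function space with $\mathbf 1_\Omega\in X'$. If that is not immediately available, I would instead invoke the corresponding remark stated with Proposition \ref{x-poincare}. The compactness itself is fully supplied by Theorem \ref{thm-domain}.
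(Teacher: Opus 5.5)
Your proposal is correct and follows the paper's proof essentially step by step. You normalize a counterexample sequence to mean zero and unit norm, extract a convergent subsequence with $W^{1,X}$ limit via Theorem \ref{thm-domain}, and contradict Proposition \ref{x-poincare} using $D>C_{X,\Omega}B_{q,N}$. The step you flag as the main obstacle is immediate, and your attribution of the embedding to the absolute continuity of the norm of $X'$ and the boundedness of $\mathcal M$ is misplaced. The last axiom of Definition \ref{Debqfs}, applied to a ball $B\supset\Omega$, already gives $\int_\Omega|h|\le C_{(B)}\|h\|_{X(\Omega)}$, i.e.\ $\mathbf 1_\Omega\in X'$, so Lemma \ref{xholder} yields continuity of $h\mapsto h_\Omega$ on $X(\Omega)$, as the paper implicitly uses.
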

\begin{remark} Let the notation be the same as in Theorem \ref{thm:frac-poincare}.
Assume further that $\mathcal{M}$ is bounded on $(X^{\frac{1}{q}})'$.
By Proposition \ref{x-poincare}, we obtain the first-order Poincar\'e inequality that, for any
$f\in W^{1,X}(\Omega),$
\begin{align}\label{yingying}
\|f-f_\Omega\|_{X(\Omega)}
\leq
C_{X,\Omega}\|\,|\nabla f|\,\|_{X(\Omega)}.
\end{align}
It is worth pointing out that \eqref{eq:fractional} is stronger than \eqref{yingying}.
Indeed, using \cite[Theorem 5.13]{zyyjga}
and repeating the proof of \cite[Proposition 3.9]{zyyjga}, we conclude that, for any
 $s\in (0,1)$
and $f\in W^{1,X}(\Omega),$
\begin{align*}
(1-s)^{\frac 1q}\left\|\left[\int_\Omega
\frac{|f(\cdot)-f(y)|^q}{|\cdot-y|^{N+sq}}\,dy
\right]^{\frac1q}
\right\|_{X(\Omega)}
\lesssim
\|\,|\nabla f|\,\|_{X(\Omega)}
\end{align*}
with the implicit positive constant independent of $s$ and $f$.
\end{remark}
Applying Theorem \ref{thm:frac-poincare} to weighted Lebesgue spaces,
we further obtain the following conclusion, which when $\Omega$ is a cube
was obtained by Lorist and Wagenaar in \cite[Corollary 5.4]{lw26arxiv}.
\begin{theorem}\label{thm:weighted-poincare}
Let $N\in\mathbb{N}\cap [2,\infty)$ and
$\Omega\subset\mathbb{R}^{N}$ be a bounded Lipschitz domain.
Let $p\in (1,\infty)$ and $\omega\in A_{p}(\mathbb{R}^N)$. Let $q\in [1,\infty)$.
Then there exist a positive constant $E$ and $s_0\in (0,1)$ such that,
for any $s\in (s_0,1)$ and $f\in L^p_{\omega}(\Omega)$,
\begin{equation*}
\|f-f_\Omega\|_{L^p_{\omega}(\Omega)}
\leq E(1-s)^{\frac{1}{q}}\left\{
\int_{\Omega}\left[\int_\Omega\frac{|f(x)-f(y)|^q}{|x-y|^{N+sq}}
\,dy\right]^{\frac{p}{q}}\omega(x)\,dx\right\}^{\frac{1}{p}}.
\end{equation*}
\end{theorem}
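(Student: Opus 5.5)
The plan is to derive Theorem \ref{thm:weighted-poincare} from Theorem \ref{thm:frac-poincare} with $X:=L^p_\omega(\mathbb{R}^N)$. All the work is in checking that this choice of $X$ satisfies the hypotheses of Theorem \ref{thm:frac-poincare}. Once that is done, we fix any admissible $D\in(C_{X,\Omega}B_{q,N},\infty)$, take the corresponding $s_0$, and set $E:=D$. The norm $\|\cdot\|_{X(\Omega)}$ is exactly $\|\cdot\|_{L^p_\omega(\Omega)}$, so writing out the right-hand side of \eqref{eq:fractional} gives the displayed inequality verbatim.

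\textbf{Step 1: $X$ and its convexifications are ball Banach function spaces.} Since $\omega\in A_p$, we have $\omega\in L^1_{\rm loc}$ and $\omega>0$ almost everywhere. For $r\in[1,p]$, the space $X^{1/r}=L^{p/r}_\omega$ is a weighted Lebesgue space with exponent $p/r\ge 1$. It is therefore a Banach lattice with the Fatou property, and indicators of balls belong to it because $\omega(B)<\infty$. The remaining axiom is local integrability on balls: $\int_B|f|\lesssim_B\|f\|_{L^{p/r}_\omega}$. When $p/r>1$ this follows from H\"older's inequality together with $\omega^{-(p/r)'/(p/r)}\in L^1_{\rm loc}$, which holds once $\omega\in A_{p/r}$.

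\textbf{Step 2: choice of $r$ and the two dual conditions.} Because $p>1$, we have $p'<\infty$. The K\"othe dual is $X'=L^{p'}_{\omega^{1-p'}}$, whose norm is absolutely continuous by dominated convergence. For the maximal operator, use the openness of $A_p$: there is $\varepsilon>0$ with $\omega\in A_{p-\varepsilon}$. Choose $r\in(1,p)$ with $p/r\ge p-\varepsilon$, so that $\omega\in A_{p/r}$. Then
\[
(X^{1/r})'=L^{(p/r)'}_{\omega^{1-(p/r)'}},
\]
and $\omega^{1-(p/r)'}\in A_{(p/r)'}$ by the duality property of Muckenhoupt weights. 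Muckenhoupt's theorem then shows that $\mathcal{M}$ is bounded on $(X^{1/r})'$. This choice of $r$ also makes the local integrability in Step 1 valid, since $p/r>1$ and $\omega\in A_{p/r}$.

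\textbf{Step 3: the constant $C_{X,\Omega}$.} Theorem \ref{thm:frac-poincare} needs $C_{X,\Omega}$ from Proposition \ref{x-poincare} to be finite. If Proposition \ref{x-poincare} carries its own hypotheses, such as boundedness of $\mathcal{M}$ on $(X^{1/q})'$, those must be checked as well. This is the main obstacle, and it is the one place where I am uncertain how the argument goes. I would look first at whether the proof of Theorem \ref{thm:frac-poincare} uses $C_{X,\Omega}$ only as a finite number coming from the first-order Poincar\'e inequality. If so, finiteness can be obtained from the classical weighted Poincar\'e inequality on bounded Lipschitz domains for $A_p$ weights, since $L^p_\omega$ with $\omega\in A_p$ is covered by the known theory. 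Any extra requirement in Proposition \ref{x-poincare} would be handled as in Step 2, by picking the relevant exponent so that the dual weight lies in $A_{p'}$ or a smaller class.

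In any case, Steps 1 and 2 are routine facts about $A_p$ weights. What is specific to this setting is the selection of $r>1$ through the openness of $A_p$, which makes both the ball Banach property and the maximal bound on $(X^{1/r})'$ hold simultaneously.
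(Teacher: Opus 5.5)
Your proposal is correct and follows the paper's route: specialize Theorem \ref{thm:frac-poincare} to $X:=L^p_\omega(\mathbb{R}^N)$, using $X'=L^{p'}_{\omega^{1-p'}}(\mathbb{R}^N)$ (which has absolutely continuous norm since $p'<\infty$) and $\omega^{1-p'}\in A_{p'}(\mathbb{R}^N)$ for the maximal bound. The paper simply takes $r:=1$, where $\omega\in A_p(\mathbb{R}^N)$ already gives both the ball Banach property of $L^p_\omega$ and the boundedness of $\mathcal{M}$ on $X'$, so your appeal to the openness of $A_p$ is unnecessary; and your Step 3 concern does not arise, because Proposition \ref{x-poincare} assumes only that $X$ and $X^{\frac1r}$ are BBF spaces and that $\mathcal{M}$ is bounded on $(X^{\frac1r})'$ --- exactly what you verified --- so $C_{X,\Omega}$ is finite with no further conditions.
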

The proofs of Theorems \ref{thm:frac-poincare} and \ref{thm:weighted-poincare} are
given in Subsection \ref{pro:1.2}.

\textbf{Application (II): Well-posedness of a weighted Triebel--Lizorkin
type nonlocal variational problem.}
Let $N\in\mathbb{N}\cap [2,\infty)$ and $\Omega\subset\mathbb{R}^{N}$
be a bounded Lipschitz domain. Let $s\in (0,1)$ and $p\in (1,\infty)$.
Recall that the \emph{regional fractional $p$-Laplacian $(-\Delta)^s_{p,\Omega}$}
is formally defined by setting, for any suitable function $u$ and any $x\in\Omega$,
\begin{align*}
(-\Delta)^s_{p,\Omega}u(x)
:=2\,\mathrm{p.v.}\int_\Omega
\frac{|u(x)-u(y)|^{p-2}[u(x)-u(y)]}{|x-y|^{N+sp}}\,dy.
\end{align*}
This operator $(-\Delta)^s_{p,\Omega}$ is also the \emph{Euler--Lagrange
operator} associated with the energy
\begin{align*}
\frac{1}{p}
\int_\Omega\int_\Omega
\frac{|u(x)-u(y)|^p}{|x-y|^{N+sp}}\,dy\,dx
\end{align*}
on $W^{s,p}(\Omega).$
The strong version of the regional fractional
$p$-Laplace equation is
\begin{align*}
(-\Delta)^s_{p,\Omega}u=f\ \ \text{in}\ \ \Omega.
\end{align*}
For more studies about the $p$-Laplacian, we refer to
\cite{bgk22,bgk23,cmn13,cr22,crr18,gk22,gs25,ms18, w15,w16}.

Now, we consider the Euler--Lagrange equation in weighted
Triebel--Lizorkin spaces,
which is a generalization of the $p$-Laplace equation. Let $s\in (0,1)$, $p,q\in (1,\infty)$, $\omega\in A_p(\mathbb{R}^N)$, and
$F^{s,\omega}_{p,q,0}(\Omega)$ be the weighted Triebel--Lizorkin
space with zero average (see Definition \ref{kongjian} for its precise definition).
For any $u\in F^{s,\omega}_{p,q,0}(\Omega),$ define the energy
\begin{align}\label{1.7x}
\Phi_s(u):=
\frac{1}{p}\int_\Omega
\left[\int_\Omega\frac{|u(x)-u(y)|^q}{|x-y|^{N+sq}}\,d y\right]^{\frac{p}{q}}
\omega(x)\,dx.
\end{align}
For any bounded measurable set $U\subset \mathbb{R}^{N}$, any  $f\in L^1_{\mathrm{loc}}$, and any $x\in\mathbb{R}^N$, let
\begin{align}\label{1.7y}
\mathcal{D}^{s}_{q,U}f(x):=
\left[\int_U
\frac{|f(x)-f(y)|^q}{|x-y|^{N+sq}}\,dy
\right]^{\frac{1}{q}}.
\end{align}
Then the formal \emph{Euler--Lagrange operator $\mathcal{L}^{s,\omega}_{p,q}$}
associated with $\Phi_s$ is given by
\begin{align*}
\mathcal{L}^{s,\omega}_{p,q}u(x)&:=\mathrm{p.v.}\int_{\Omega}
\frac{|u(x)-u(y)|^{q-2}[u(x)-u(y)]}{|x-y|^{N+sq}}\\
&\quad\times
\left\{\omega(x)\left[\mathcal{D}^s_{q,\Omega}u(x)\right]^{p-q}
+\omega(y)\left[\mathcal{D}^s_{q,\Omega}u(y)\right]^{p-q}\right\}\,dy.
\end{align*}
When $p=q$ and $\omega\equiv1$,
$\mathcal{L}^{s,\omega}_{p,q}=(-\Delta)^s_{p,\Omega}.$
Next, we prove the existence and uniqueness of weak
solutions to the following weighted nonlocal problem
\begin{align}\label{qiangjie}
\mathcal{L}^{s,\omega}_{p,q}u=f\ \ \text{in}\ \ \Omega.
\end{align}
To this end, for any $u,v\in F^{s,\omega}_{p,q,0}(\Omega)$ with $u\not\equiv0$,
 let $A_s(0,v):=0$  and
\begin{align*}
A_s(u,v)
:&=
\int_\Omega
\omega(x)
\left[
\int_\Omega
\frac{|u(x)-u(y)|^q}{|x-y|^{N+sq}}\,dy
\right]^{\frac{p}{q}-1}
\\
&\quad\times
\int_\Omega
\frac{|u(x)-u(y)|^{q-2}[u(x)-u(y)][v(x)-v(y)]}{|x-y|^{N+sq}}
\,dy\,dx.
\end{align*}

Applying Theorem \ref{thm:weighted-poincare}, we obtain the following theorem.
\begin{theorem}\label{thm:numan}
Let $N\in\mathbb{N}\cap [2,\infty)$ and $\Omega\subset\mathbb{R}^{N}$
be a bounded Lipschitz domain. Let $p,q\in (1,\infty)$. Assume that $\omega\in A_{p}(\mathbb{R}^N)$.
Let $E,$ $s_0,$ and $s$ be as in Theorem \ref{thm:weighted-poincare}, and let $f\in L^{p'}_{\omega^{1-p'}}(\Omega)$ satisfy $\int_{\Omega}f(x)\,dx=0.$
Then \eqref{qiangjie} has a
unique weak solution in $F^{s,\omega}_{p,q,0}(\Omega)$, i.e.,
there exists a unique
$u\in F^{s,\omega}_{p,q,0}(\Omega)$ such that, for any $v\in F^{s,\omega}_{p,q,0}(\Omega),$
\begin{equation}\label{eq:weak}
A_s(u,v)=\int_\Omega f(x)v(x)\,dx.
\end{equation}
Moreover,
\begin{align}\label{huxiao}
\|u\|_{F^{s,\omega}_{p,q}(\Omega)}
\leq
\left[E^{\frac{1}{p-1}}
(1-s)^{\frac{1}{q(p-1)}}+E^{\frac{p}{p-1}}(1-s)^{\frac{p}{q(p-1)}}\right]
\|f\|_{L^{p'}_{\omega^{1-p'}}(\Omega)}^{\frac{1}{p-1}}.
\end{align}
\end{theorem}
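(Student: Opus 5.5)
The plan is to apply the direct method of the calculus of variations to the functional $J(u):=\Phi_s(u)-\int_\Omega f(x)u(x)\,dx$ on $F^{s,\omega}_{p,q,0}(\Omega)$. Throughout, the space is normed by $[p\Phi_s(u)]^{1/p}$, which is equivalent to the full norm $\|u\|_{F^{s,\omega}_{p,q}(\Omega)}$ on zero-average functions by Theorem \ref{thm:weighted-poincare}. I will then show that minimizers coincide with weak solutions, get uniqueness from strict convexity, and read off \eqref{huxiao} by testing the equation with $v=u$.

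\textbf{Step 1 (coercivity and continuity of the linear term).} For $u$ with $\int_\Omega u=0$, Theorem \ref{thm:weighted-poincare} gives $\|u\|_{L^p_\omega(\Omega)}\le E(1-s)^{1/q}[p\Phi_s(u)]^{1/p}$. By H\"older's inequality with the weights $\omega^{1/p}$ and $\omega^{-1/p}$,
\begin{align*}
\left|\int_\Omega fu\,dx\right|\le\|f\|_{L^{p'}_{\omega^{1-p'}}(\Omega)}\|u\|_{L^p_\omega(\Omega)}.
\end{align*}
Combining the two, $J(u)\ge\frac1p\|\mathcal D^s_{q,\Omega}u\|_{L^p_\omega}^p-E(1-s)^{1/q}\|f\|\,\|\mathcal D^s_{q,\Omega}u\|_{L^p_\omega}$, so $J$ is coercive and bounded below.

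\textbf{Step 2 (existence).} The map $u\mapsto\Phi_s(u)^{1/p}$ is a mixed $L^p_\omega(L^q)$ norm of the difference quotient $(x,y)\mapsto[u(x)-u(y)]|x-y|^{-N/q-s}$, so $\Phi_s$ is convex and continuous, hence weakly lower semicontinuous. The space $F^{s,\omega}_{p,q,0}(\Omega)$ is reflexive, since it embeds isometrically into the reflexive space $L^p_\omega(L^q)\times L^p_\omega$ as a closed subspace. A minimizing sequence is bounded by coercivity, so it has a weakly convergent subsequence whose limit is a minimizer $u$. The linear term is weakly continuous by Step 1.

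\textbf{Step 3 (Euler--Lagrange equation and uniqueness).} Differentiate $t\mapsto\Phi_s(u+tv)$ at $t=0$. For $u\not\equiv0$, dominated convergence with H\"older bounds justifies $\frac{d}{dt}\Phi_s(u+tv)|_{t=0}=A_s(u,v)$. The integrand $\frac1p[\int|a+tb|^q\cdots]^{p/q}$ has derivative $[\int|a|^q\cdots]^{p/q-1}\int|a|^{q-2}ab\cdots$; on the set where $\mathcal D^s_{q,\Omega}u(x)=0$ the correct derivative is $0$ when $p>1$, and the convention $A_s(0,v)=0$ handles the degenerate case. This gives \eqref{eq:weak}.

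Conversely, any weak solution is a critical point of the convex functional $J$, hence a minimizer. For uniqueness, the main obstacle is that $\Phi_s$ is not obviously strictly convex, because the $L^q$-norm fails to be strictly convex along parallel directions. I would argue as follows. If $u_1,u_2$ both minimize, then $\Phi_s$ is affine on the segment between them. Strict convexity of $t\mapsto t^p$ forces $\mathcal D^s_{q,\Omega}u_1=\mathcal D^s_{q,\Omega}u_2$ a.e. Strict convexity of the $L^q(dy)$ norm for $q>1$ then forces $u_1(x)-u_1(y)=u_2(x)-u_2(y)$ a.e. Hence $u_1-u_2$ is constant, and it is $0$ by the zero average. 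Alternatively, I would use monotonicity of $A_s$, i.e. $A_s(u_1,u_1-u_2)-A_s(u_2,u_1-u_2)\ge0$ with equality only if $u_1=u_2$, which follows from the same duality argument.

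\textbf{Step 4 (estimate).} Take $v=u$ in \eqref{eq:weak}. Then $A_s(u,u)=p\Phi_s(u)=\|\mathcal D^s_{q,\Omega}u\|_{L^p_\omega}^p$, and by Step 1 the right-hand side is at most $\|f\|E(1-s)^{1/q}\|\mathcal D^s_{q,\Omega}u\|_{L^p_\omega}$. This yields
\begin{align*}
\|\mathcal D^s_{q,\Omega}u\|_{L^p_\omega}\le E^{\frac1{p-1}}(1-s)^{\frac1{q(p-1)}}\|f\|^{\frac1{p-1}}.
\end{align*}
Applying Theorem \ref{thm:weighted-poincare} once more gives
\begin{align*}
\|u\|_{L^p_\omega}\le E^{\frac p{p-1}}(1-s)^{\frac p{q(p-1)}}\|f\|^{\frac1{p-1}}.
\end{align*}
Summing the two bounds, with the norm of $F^{s,\omega}_{p,q}(\Omega)$ taken as $\|u\|_{L^p_\omega}+\|\mathcal D^s_{q,\Omega}u\|_{L^p_\omega}$, gives \eqref{huxiao}.
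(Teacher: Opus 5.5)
Your proposal is correct and follows essentially the same route as the paper: the direct method via Lemma \ref{lem:bianfen} (reflexivity through the isometric embedding into $L^p_\omega(\Omega)\times L^{p,q}_\omega(\Omega\times\Omega)$, coercivity from Theorem \ref{thm:weighted-poincare}, G\^ateaux derivative $A_s(u,v)$, weak solutions are minimizers by convexity), followed by testing with $v=u$ to get \eqref{huxiao}. The only difference is in uniqueness, where your equality-case argument (equality in Minkowski's inequality in $L^q(dy)$, strict convexity of $t\mapsto t^p$, and the zero average) is more careful than the paper's Lemma \ref{lem:bianfen}(iv); that lemma infers strict convexity of $J_s$ from strict convexity of $t^p$ and mere convexity of the mixed norm, which does not suffice when the two norms coincide.
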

The proof of Theorem \ref{thm:numan} is given in Subsection \ref{sec2.6}.

The remainder of this article is organized as follows.

In Section \ref{sec2}, we  prove our main results. To be precise, in
Subsection \ref{sec2.1}, we first recall some basic notions and  known facts on ball
quasi-Banach function spaces.
In Subsection \ref{sec2.2}, we establish an elaborate decomposition
of bounded Lipschitz domains.
This decomposition is one of
the main technical ingredients of this article and plays a key role in
the proof of Theorem \ref{thm-domain}.
In Subsection \ref{sec2.3}, we give a
weighted fractional Poincar\'e inequality on cubes.
In Subsection \ref{sec2.4}, using the preceding
decomposition and the weighted fractional Poincar\'e inequality, we prove
Theorem \ref{thm-domain}.
In Subsection \ref{pro:1.2}, as a consequence
of Theorem \ref{thm-domain}, we obtain two improved fractional Poincar\'e
inequalities in Theorems \ref{thm:frac-poincare} and  \ref{thm:weighted-poincare}. Finally, in
Subsection \ref{sec2.6},
we use
the variational method and
Lemma \ref{lem:bianfen}
to prove
Theorem \ref{thm:numan}.

In Section \ref{shijie}, we apply Theorems \ref{thm-domain}
and \ref{thm:frac-poincare}, respectively, to
the Morrey space,
the variable Lebesgue space,
the mixed-norm Lebesgue space,
the weighted Lebesgue space,
the Orlicz space, and
the Orlicz-slice space. All these obtained
results are new. Owing to their generality and flexibility
of Theorems \ref{thm-domain} and \ref{thm:frac-poincare},
further applications can be expected.

At the end of this section, we make some notational conventions. Let
${\mathbb N}:=\{1,2,\ldots\}$ and ${\mathbb Z}_+:={\mathbb N}\cup\{0\}$.
For any $\beta:=(\beta_1,\ldots,\beta_n)\in\mathbb{Z}_+^n:=(\mathbb{Z}_+)^n$,
let $|\beta|:=\beta_1+\cdots+\beta_n$
and, for any $x:=(x_1,\ldots,x_n),$ let
$x^\beta:=x_1^{\beta_1}\cdots x_n^{\beta_n}$ and
 $\partial^\beta:=(\frac{\partial}{\partial x_1})^{\beta_1}\cdots(\frac{\partial}{\partial x_n})^{\beta_n}$.
We always denote by $C$ a positive constant
which is independent of the main parameters involved,
but it may vary from line to line.
We also use $C_{\alpha,\beta,...}$ to denote a
positive constant depending on the indicated parameters $\alpha$, $\beta\ldots$.
The notation $f\lesssim g$ means that $f\le Cg$.
If $f\lesssim g$ and $g\lesssim f$,
we then write $f\sim g$. If $f\le Cg$ and $g=h$
or $g\le h$, we then write $f\lesssim g=h$ or $f\lesssim g\le h$.
For any $x\in{\mathbb{R}^n}$ and $r\in(0,{\infty})$,
we denote by
$B(x,r):=\left\{y\in{\mathbb{R}^n}:\ |y-x|<r\right\}$
the ball with center $x$ and radius $r$.
Also, for any $f\in L^1_{\mathrm{loc}}$,
its \emph{Hardy--Littlewood maximal function} $\mathcal{M}f$
is defined by setting, for any $x\in\mathbb{R}^n$,
\begin{align*}
\mathcal{M}f(x):=\sup_{B\ni x}
\frac{1}{|B|}\int_B\left|f(y)\right|\,dy,
\end{align*}
where the supremum is taken over all
balls $B\subset\mathbb{R}^n$ containing $x$.
We use $\mathbf{1}_E$ to denote the
characteristic function of a measurable set $E\subset {\mathbb{R}^n}$
and $\mathbf{0}$ to denote the \emph{origin} of ${\mathbb{R}^n}$.
For any $p\in[1,\infty]$,
we denote by $p'$ its \emph{conjugate index};
that is, $\frac{1}{p}+\frac{1}{p'}=1$.
We also use $\varliminf$ to denote the \emph{limit
inferior}. Finally, in all proofs we
consistently retain the notation
introduced in the original theorem (or related statement).

\section{Proofs of Main Results}\label{sec2}

This section consists of six subsections.
In Subsection \ref{sec2.1} we recall several basic concepts
and known facts on ball quasi-Banach function spaces.
In Subsection \ref{sec2.2}, we give an elaborate decomposition
of bounded Lipschitz domains, which is the main technical
tool used in the proof of Theorem \ref{thm-domain}.
Subsection \ref{sec2.3} is devoted to the
weighted fractional Poincar\'e inequality on cubes.
Using this and the decomposition
obtained in Subsection \ref{sec2.2}, we give the proof of
Theorem \ref{thm-domain}
in Subsection \ref{sec2.4}.
As applications of Theorem \ref{thm-domain}, in Subsection \ref{pro:1.2}
we establish the fractional Poincar\'e inequality stated in Theorems \ref{thm:frac-poincare} and \ref{thm:weighted-poincare}.
Finally, in Subsection \ref{sec2.6} we  prove Theorem \ref{thm:numan} by means of the variational method.

\subsection{Preliminary}\label{sec2.1}
We begin by  recalling some preliminaries on ball
quasi-Banach function spaces introduced in \cite{shyy}.
Let
$\mathbb{B}
:=\left\{B(x,r):\ x\in\mathbb{R}^n \text{ and } r\in(0,\infty)\right\}.$
Denote by the \emph{symbol} $\mathscr M$ the
set of all measurable functions on $\mathbb{R}^n$.
\begin{definition}\label{Debqfs}
A quasi-Banach space $X\subset{\mathscr M}$, equipped with
a quasi-norm
$\|\cdot\|_X$ which makes sense for
all functions in ${\mathscr M}$,
is called a \emph{ball quasi-Banach
function space} (for short, BQBF \emph{space}) if it satisfies that
\begin{itemize}
\item[(i)] for any $f\in {\mathscr M}$,
$\|f\|_X=0$ implies that $f=0$ almost everywhere,
\item[(ii)] for any $f,g\in {\mathscr M}$,
$|g|\le |f|$ almost everywhere implies
that $\|g\|_X\le\|f\|_X$,
\item[(iii)] for any $\{f_m\}_{m\in\mathbb{N}}
\subset {\mathscr M}$
and $f\in {\mathscr M}$, $0\le f_m\uparrow f$
almost everywhere as $m\to\infty$ implies that
$\|f_m\|_X\uparrow\|f\|_X$ as $m\to\infty$,
\item[(iv)] $B\in\mathbb{B}$ implies that $\mathbf
{1}_B\in X$.
\end{itemize}
Moreover, a ball quasi-Banach function space
$X$ is called a \emph{ball Banach function space} (for short, BBF \emph{space})
if the norm of $X$ satisfies the triangle
inequality: for any $f,g\in X$,
\begin{align*}
\|f+g\|_X\le \|f\|_X+\|g\|_X,
\end{align*}
and that, for any $B\in \mathbb{B}$, there exists a
positive constant $C_{(B)}$, depending on $B$, such that,
for any $f\in X$,
\begin{equation*}
\int_B|f(x)|\,dx\le C_{(B)}\|f\|_X.
\end{equation*}
\end{definition}
\begin{remark}
\begin{enumerate}
\item[$\mathrm{(i)}$] Let $X$ be a BQBF space on $\mathbb{R}^n$. By \cite[Remark 2.6(i)]{yhyy23},
we conclude that, for any
$f\in\mathscr{M}$, $\|f\|_{X}=0$ if and only if $f=0$
almost everywhere.

\item[$\mathrm{(ii)}$] Using \cite[Theorem 2]{dfmn21},
we conclude that both (ii) and (iii) of
Definition \ref{Debqfs} imply that any ball quasi-Banach
function space is complete.

\item[$\mathrm{(iii)}$] As mentioned in
\cite[Remark 2.6(ii)]{yhyy23}, we obtain an
equivalent formulation of Definition \ref{Debqfs}
via replacing ``any ball $B$" by ``any
bounded measurable set $E$" therein.

\item[$\mathrm{(iv)}$] In Definition \ref{Debqfs}, if we
replace ``any ball $B$" by ``any measurable set $E$ with
finite measure", then we obtain the
definition of (quasi-)Banach function spaces
introduced in \cite[Definitions 1.1 and 1.3]{bs88}. Thus,
a (quasi-)Banach function space
is always a ball (quasi-)Banach function
space and the converse may not be true.

\item[\rm(v)]
In Definition \ref{Debqfs},
if we replace (iv)
by the \emph{saturation property} that,
for any measurable set $E\subset\mathbb{R}^n$
with $|E|\in(0,\infty)$, there exists a measurable subset $F$ of $E$
with $|F|\in(0,\infty)$ such
that $\mathbf{1}_F\in X$,
then we obtain the concept of quasi-Banach function spaces
in Lorist and Nieraeth
\cite{ln}.
Moreover, by \cite[Proposition 4.21]{n23},
we find that, if the
quasi-normed vector space $X$
under consideration satisfies
the additional assumption that
the Hardy--Littlewood maximal operator
$\mathcal{M}$ is weakly bounded on $X$ or
one of its convexification,
then the definition of quasi-Banach function
spaces in \cite{ln} coincides with
the definition of ball quasi-Banach function spaces. Thus,
under this additional assumption,
working either with ball quasi-Banach function
spaces in the sense of Definition \ref{Debqfs}
or with quasi-Banach function spaces in
the sense of \cite{ln} would yield
exactly the same results.
\end{enumerate}
\end{remark}
We recall the concept of the associate space and the convexification of a ball Banach function space,
which can be found, for instance,
in \cite[Chapter 1, Definitions 2.1 and 2.3]{bs88} and \cite[Definition 2.6]{shyy}.
\begin{definition}
For any BBF space $X$,
the \emph{associate space} (also called the
\emph{K\"othe dual}) $X'$ is defined by setting
\begin{equation}\label{asso}
X':=\left\{f\in\mathscr M:\ \|f\|_{X'}
:=\sup_{\{g\in X:\ \|g\|_X=1\}}\|fg\|_{L^1}
<\infty\right\},
\end{equation}
where $\|\cdot\|_{X'}$ is called the
\emph{associate norm} of $\|\cdot\|_X$.
\end{definition}
\begin{definition}
Let $p\in(0,\infty)$ and $X$ be a $\mathrm{BQBF}$ space.
The \emph{p-convexification} $X^p$ of $X$ is defined by setting
	\begin{equation*}
		X^p:=\left\{f\in\mathscr{M}:\ \|f\|_{X^p}:=\left\||f|^p\right\|_{X}^{\frac{1}{p}}<\infty\right\}.
	\end{equation*}
\end{definition}
The following H\"older's inequality is a simple
corollary of both Definition \ref{Debqfs}
and \eqref{asso} (see \cite[Theorem 2.4]{bs88}).
\begin{lemma}\label{xholder}
Let $X$ be a BBF space and $X'$ its associate space.
If $f\in X$ and $g\in X'$, then $fg$ is integrable and
\begin{equation*}
\int_{\mathbb{R}^n}|f(x)g(x)|\,dx\le \|f\|_X\|g\|_{X'}.
\end{equation*}
\end{lemma}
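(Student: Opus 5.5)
Since $f\in X$ and $g\in X'$, I would argue directly from the definition \eqref{asso} of the associate norm, using homogeneity and the lattice property (ii) of Definition \ref{Debqfs}. First, if $\|f\|_X=0$, then Remark (i) gives $f=0$ almost everywhere. Hence $fg=0$ almost everywhere, and both sides of the inequality vanish, so there is nothing to prove.

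In the main case $\|f\|_X\in(0,\infty)$, I would set $h:=f/\|f\|_X$. Since $X$ is a normed space, $\|h\|_X=1$, so $h$ is admissible in the supremum defining $\|g\|_{X'}$. Then
\begin{align*}
\int_{\mathbb{R}^n}|f(x)g(x)|\,dx=\|f\|_X\int_{\mathbb{R}^n}|h(x)g(x)|\,dx
=\|f\|_X\|gh\|_{L^1}\le\|f\|_X\|g\|_{X'}.
\end{align*}
Because $\|g\|_{X'}<\infty$, this shows $fg\in L^1$ and gives the inequality at the same time.

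The only point needing care is the degenerate case: the supremum in \eqref{asso} runs only over functions of unit norm, so a function of norm zero cannot be normalized. This is why the case $\|f\|_X=0$ must be handled first, and there property (i) settles it. Measurability of $fg$ is automatic since $f,g\in\mathscr M$. I expect no real obstacle.
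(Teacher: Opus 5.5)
Your proof is correct and is essentially the argument the paper relies on: it cites \cite[Theorem 2.4]{bs88} and calls the lemma a simple corollary of Definition \ref{Debqfs} and \eqref{asso}. That argument is exactly yours: dispose of $\|f\|_X=0$ via property (i), then normalize $f$ to unit norm and read the bound off the definition of $\|g\|_{X'}$. You are right that the degenerate case must be handled separately here, because the supremum in \eqref{asso} runs over $\|g\|_X=1$ rather than $\|g\|_X\le 1$.
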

The following lemma is exactly \cite[Lemma 2.6]{zwyy}.
\begin{lemma}\label{xduiou}
Let $X$ be a BBF space. Then $X$ coincides with its
second associate space $X''$. In other words, a
function $f$ belongs to $X$ if and only if
it belongs to $X''$ and,
in that case,
$
\|f\|_X=\|f\|_{X''}.
$
\end{lemma}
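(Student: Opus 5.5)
We prove Lemma \ref{xduiou}, the Lorentz--Luxemburg-type statement $X=X''$ with equality of norms, by establishing two inequalities. The first, $\|f\|_{X''}\le\|f\|_X$, is immediate from Lemma \ref{xholder}. For any $g\in X'$ with $\|g\|_{X'}=1$ we have $\|fg\|_{L^1}\le\|f\|_X$. Taking the supremum over such $g$ in the definition \eqref{asso}, applied with $X'$ in place of $X$, gives $\|f\|_{X''}\le \|f\|_X$. In particular $f\in X$ implies $f\in X''$.

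The reverse inequality, $\|f\|_X\le\|f\|_{X''}$ for every $f\in\mathscr M$, is the substantive step. We first reduce to nice functions. By Definition \ref{Debqfs}(ii) both norms depend only on $|f|$, so we may assume $f\ge 0$. Put $f_m:=\min\{f,m\}\mathbf 1_{B(\mathbf 0,m)}$. Then $0\le f_m\uparrow f$, so the Fatou property (iii) gives $\|f_m\|_X\uparrow\|f\|_X$. Moreover $\|f_m\|_{X''}\le\|f\|_{X''}$, because the associate norm is monotone by its very definition. Hence it suffices to treat bounded $f$ supported in a ball, and for such $f$ Definition \ref{Debqfs}(iv) and Remark (iii) give $f\in X$.

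For such $f$ with $\|f\|_X>0$ (otherwise there is nothing to prove), fix $\varepsilon>0$. We seek $g\in X'$ with $\|g\|_{X'}\le1$ and $\int fg\ge\|f\|_X-\varepsilon$. We plan a Hahn--Banach argument in the Banach lattice $X$, and expect this to be the main obstacle. A bounded functional $\Lambda$ on $X$ with $\|\Lambda\|\le 1$ and $\Lambda(f)=\|f\|_X$ need not be given by integration against a function. To get around this we work on a fixed ball $B$ containing the support of $f$ and consider the closed convex set
\begin{equation*}
K:=\left\{h\in X:\ \|h\|_X\le 1,\ h=h\mathbf 1_B\right\}.
\end{equation*}
The Fatou property (iii) implies that $K$ is closed in $L^1(B)$, not merely in $X$: an $L^1$-convergent sequence has an a.e.\ convergent subsequence, and combining (iii) with Fatou-type monotone truncation controls $\|\liminf|h_k|\|_X$. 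Here the BBF requirement $\int_B|h|\le C_{(B)}\|h\|_X$ guarantees that $K\subset L^1(B)$.

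Since $(1-\varepsilon')f/\|f\|_X\notin K$ fails to separate trivially, we instead separate $(1+\varepsilon')f/\|f\|_X$ from $K$ in $L^1(B)$. This produces $g\in L^\infty(B)$ with
\begin{equation*}
\int hg\le 1<(1+\varepsilon')\frac{\int fg}{\|f\|_X}\quad\text{for all } h\in K.
\end{equation*}
Replacing $g$ by $|g|\,\mathrm{sgn}$ adjustments, using the lattice property, gives $\|g\|_{X'}\le1$. Consequently $\|f\|_{X''}\ge\int fg>\|f\|_X/(1+\varepsilon')$, and letting $\varepsilon'\to0$ concludes the proof. This is essentially the argument of \cite[Theorem 2.7]{bs88}, transplanted to balls, which is precisely why only (iii) and (iv) together with the local $L^1$ bound are needed.
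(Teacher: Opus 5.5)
Your proof is correct, and it is the standard Lorentz--Luxemburg argument of \cite[Theorem 2.7]{bs88} with sets of finite measure replaced by balls. That is also how \cite[Lemma 2.6]{zwyy} proves the statement, and the paper simply quotes that lemma without proof. The steps you leave implicit are routine: the separation constant can be normalized to $1$ because $0\in K$ gives $\sup_{h\in K}\int hg\ge 0$, and testing against $|h|\mathbf 1_B\,\mathrm{sgn}(g)\in K$ for every $h$ with $\|h\|_X\le 1$ yields $\|g\mathbf 1_B\|_{X'}\le 1$.
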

We recall the   definition of ball Banach function
spaces with absolutely continuous norm;
see \cite[Definition 3.1]{bs88}.
\begin{definition}
A BBF space $X$ is said to have an
\emph{absolutely continuous norm} if, for any $f\in X$
and any sequence of measurable sets $\{E_j\}_{j\in\mathbb{N}}\subset \mathbb{R}^n$
satisfying that $\mathbf{1}_{E_j}\to 0$ almost everywhere as $j\to\infty$,
$\|f\mathbf{1}_{E_j}\|_X\to 0$ as $j\to\infty$.
\end{definition}
The following lemma can be   found in \cite[p.7, Corollary 4.3]{bs88}.
\begin{lemma}\label{xingxing}
Let $X$ be a BBF space.
Then $X$ has an absolutely continuous norm if and only if
$X^\ast=X',$ where $X^\ast$ denotes the space of all   bounded linear functionals on $X.$
\end{lemma}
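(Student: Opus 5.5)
We argue in the setting of Definition \ref{Debqfs}, following the classical argument of \cite[Chapter 1, Theorem 4.1 and Corollary 4.3]{bs88}. The only real change is that $\mathbf 1_E\in X$ is now guaranteed only for bounded $E$ (Remark (iii) after Definition \ref{Debqfs}), not for every $E$ of finite measure.

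\textbf{The embedding of $X'$ into $X^\ast$.} First we identify $X'$ with a subspace of $X^\ast$. Each $g\in X'$ defines $L_g(f):=\int_{\mathbb R^n} fg\,dx$. By Lemma \ref{xholder}, $\|L_g\|_{X^\ast}\le\|g\|_{X'}$. For the reverse inequality, apply Lemma \ref{xduiou} to $X'$ in place of $X$, which gives $\|g\|_{X'}=\sup_{\|f\|_X\le1}\int|fg|$. Replacing $f$ by $f\,\overline{\mathrm{sgn}\,g}$, which does not change $\|f\|_X$ by (ii), shows $\|L_g\|_{X^\ast}\ge\|g\|_{X'}$. So $g\mapsto L_g$ is an isometric embedding, and in both directions the question is whether it is onto.

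\textbf{($\Rightarrow$).} Assume $X$ has an absolutely continuous norm and let $L\in X^\ast$.
\begin{enumerate}
\item[(a)] For bounded measurable $E$, set $\nu(E):=L(\mathbf 1_E)$. Fix a ball $B$ and a disjoint decomposition $E=\bigcup_j E_j\subset B$. The sets $E\setminus\bigcup_{j\le m}E_j$ decrease to $\emptyset$, so absolute continuity gives $\|\mathbf 1_{E\setminus\bigcup_{j\le m}E_j}\|_X\to0$. Hence $\nu$ is countably additive on subsets of $B$. If $|E|=0$, then $\|\mathbf 1_E\|_X=0$, so $\nu\ll dx$.
\item[(b)] The Radon--Nikodym theorem on each ball $B(\mathbf 0,k)$ gives a density $g_k$. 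These densities are consistent as $k$ grows, so we obtain $g\in L^1_{\rm loc}$ with $L(\mathbf 1_E)=\int_E g$ for every bounded $E$.
\item[(c)] By linearity, $L(f)=\int fg$ for simple functions with bounded support. The identity extends to bounded functions with bounded support: if $h$ vanishes off a ball $B$, then $\|h\|_X\le\|h\|_{L^\infty}\|\mathbf 1_B\|_X$, so uniform approximation on $B$ gives convergence in $X$.
\item[(d)] To show $g\in X'$, take $0\le f\in X$ and put $f_m:=\min(f,m)\mathbf 1_{B(\mathbf 0,m)}$. Then
$\int f_m|g|=L(f_m\,\overline{\mathrm{sgn}\,g})\le\|L\|\,\|f\|_X$, and monotone convergence gives $\|g\|_{X'}\le\|L\|$.
\item[(e)] For general $f\in X$, put $f_m:=f\mathbf 1_{\{|f|\le m\}\cap B(\mathbf 0,m)}$. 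Since $f$ is finite almost everywhere, the exceptional sets shrink to a null set, and absolute continuity gives $f_m\to f$ in $X$. Because $fg\in L^1$, dominated convergence yields $L(f)=\lim_m L(f_m)=\int fg$. Thus $L=L_g$.
\end{enumerate}

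\textbf{($\Leftarrow$).} Assume $X^\ast=X'$ and, for contradiction, that there are $f\in X$, sets $E_j$ with $\mathbf 1_{E_j}\to0$ almost everywhere, and $\delta>0$ with $\|f\mathbf 1_{E_j}\|_X\ge\delta$ along a subsequence. Set $F_j:=\bigcup_{k\ge j}E_k$. These sets decrease to a null set, and $\|f\mathbf 1_{F_j}\|_X\ge\delta$ by (ii).
\begin{enumerate}
\item[(a)] Let $\phi_j:=|f|\mathbf 1_{F_j}$. For every $g\in X'$, dominated convergence (using $fg\in L^1$) gives $\int\phi_j g\to0$. Since $X^\ast=X'$, this says $\phi_j\to0$ weakly in $X$.
\item[(b)] By Mazur's lemma, some convex combinations of $\{\phi_j\}_{j\ge m}$ tend to $0$ in norm.
\item[(c)] The sequence $\phi_j$ is pointwise decreasing. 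So any convex combination of $\phi_j$ with $m\le j\le J$ dominates $\phi_J$, and by (ii) its norm is at least $\|\phi_J\|_X\ge\delta$. This contradicts (b).
\end{enumerate}

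\textbf{Main obstacle.} The delicate step is part (a) of ($\Rightarrow$). Since $\mathbf 1_E\in X$ is only available for bounded $E$, $\nu$ must be built ball by ball. The absolute continuity of the norm is exactly what converts finite additivity of $L$ into countable additivity, and so makes Radon--Nikodym applicable.
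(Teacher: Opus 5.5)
Your proof is correct. It also does more than the paper, which gives no argument for this lemma and only cites \cite[Corollary 4.3]{bs88}.

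That result is stated for Banach function spaces in the sense of Bennett--Sharpley. There $\mathbf 1_E\in X$, and $\int_E|f|\lesssim\|f\|_X$, for every set $E$ of finite measure. So the citation does not literally cover BBF spaces, where these properties are only required on balls. Your argument supplies exactly the needed adaptation:
\begin{itemize}
\item In ($\Rightarrow$) you only ever evaluate $L$ on bounded functions with bounded support. You get countable additivity of $E\mapsto L(\mathbf 1_E)$ from absolute continuity applied to $\mathbf 1_B$, build the density ball by ball via Radon--Nikodym, and recover a general $f$ by truncation to $\{|f|\le m\}\cap B(\mathbf 0,m)$ together with absolute continuity.
\item In ($\Leftarrow$) you argue as follows. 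Since every functional is some $L_g$, the decreasing sequence $|f|\mathbf 1_{F_j}$ is weakly null by dominated convergence. A weakly null, pointwise decreasing sequence of nonnegative functions is then norm null, by Mazur's lemma and the lattice property (ii). This uses only (ii) and H\"older's inequality, so it does not care whether indicators of unbounded sets belong to $X$.
\end{itemize}

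Three small repairs are needed.
\begin{itemize}
\item The identity $\|g\|_{X'}=\sup_{\|f\|_X\le 1}\int|fg|$ is just the definition \eqref{asso} plus homogeneity. It is not an application of Lemma \ref{xduiou} to $X'$, which would assert something different, namely $X'=X'''$.
\item In that same step, test with $|f|\,\overline{\mathrm{sgn}\,g}$ rather than $f\,\overline{\mathrm{sgn}\,g}$.
\item In ($\Rightarrow$)(e), justify that $f\in X$ is finite almost everywhere by the BBF axiom $\int_B|f|\le C_{(B)}\|f\|_X$, which makes $f$ locally integrable.
\end{itemize}
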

\begin{definition}
Let $\Omega\subset \mathbb{R}^N$
be open.
The \emph{restricted space} $X(\Omega)$ of
the BQBF space $X$ to $\Omega$
is defined by setting
$X(\Omega):=\{f\in\mathscr{M}(\Omega):\
f=g|_\Omega\text{ for some }g\in X\}$.
For any $f\in X(\Omega)$, let
\begin{align}\label{1035}
\left\|f\right\|_{X(\Omega)}:=\inf\left\{\|g\|_{X}:\
f=g|_\Omega,\ g\in X\right\}.
\end{align}
\end{definition}

The following proposition shows that
the infimum in \eqref{1035} can be attained, which is precisely \cite[Proposition 2.7]{zyyjga}.
\begin{proposition}\label{norm}
Let $\Omega\subset \mathbb{R}^N$
be open. Let $X$ be a BQBF space
and $X(\Omega)$ its restricted space.
Then, for any $f\in X(\Omega)$,
$\|f\|_{X(\Omega)}=
\|\widetilde{f}\|_{X}$,
where
\begin{align*}
\widetilde{f}(x):=
\begin{cases}
f(x)&\textup{ if }x\in\Omega,\\
0&\textup{ if }x\in\Omega^\complement.
\end{cases}
\end{align*}
\end{proposition}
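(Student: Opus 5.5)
The plan is to prove the two inequalities $\|f\|_{X(\Omega)}\le\|\widetilde f\|_X$ and $\|\widetilde f\|_X\le\|f\|_{X(\Omega)}$ separately. Both should follow directly from the definition \eqref{1035} and the lattice property (ii) of Definition \ref{Debqfs}. The only idea needed is that, among all extensions of $f$ to $\mathbb R^N$, the zero extension $\widetilde f$ is pointwise smallest in absolute value.

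\emph{Step 1: $\widetilde f$ is admissible in \eqref{1035}.} Fix $f\in X(\Omega)$. By definition there is some $g\in X$ with $g|_\Omega=f$. The function $\widetilde f$ is measurable, being $f$ on the measurable set $\Omega$ and $0$ off it. Moreover, almost everywhere we have
\begin{align*}
|\widetilde f|=|g|\mathbf 1_\Omega\le|g|.
\end{align*}
By Definition \ref{Debqfs}(ii) this gives $\|\widetilde f\|_X\le\|g\|_X<\infty$. Since the quasi-norm $\|\cdot\|_X$ is defined on all of $\mathscr M$ and $X$ consists exactly of the measurable functions with finite quasi-norm, it follows that $\widetilde f\in X$. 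Because $\widetilde f|_\Omega=f$, the function $\widetilde f$ is one of the competitors in the infimum \eqref{1035}, and hence $\|f\|_{X(\Omega)}\le\|\widetilde f\|_X$.

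\emph{Step 2: the reverse inequality.} The estimate $\|\widetilde f\|_X\le\|g\|_X$ from Step 1 holds for every $g\in X$ with $g|_\Omega=f$. Taking the infimum over all such $g$ yields $\|\widetilde f\|_X\le\|f\|_{X(\Omega)}$. Combined with Step 1, this shows that the infimum in \eqref{1035} is attained at $\widetilde f$.

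There is no real obstacle here. The only point needing care is the membership $\widetilde f\in X$, namely that domination by an element of $X$ places $\widetilde f$ in $X$. This follows from the ideal property (ii) together with the convention that $X$ is the set of measurable functions with finite quasi-norm. No triangle inequality is used, so the argument works for quasi-norms as well.
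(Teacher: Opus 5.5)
Your proof is correct. The bound $|\widetilde f|=|g|\mathbf 1_\Omega\le|g|$ holds for every extension $g\in X$ of $f$; combined with Definition \ref{Debqfs}(ii) and the fact that $\widetilde f$ is itself an admissible extension, it gives both inequalities. The paper does not prove this statement itself but cites \cite[Proposition 2.7]{zyyjga}, and your lattice-property argument is the natural, standard proof of it.
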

\begin{definition}
Let $X$ be a BQBF space. The \emph{inhomogeneous ball Banach
Sobolev space} $W^{1,X}(\Omega)$
is defined to be the set of all $f\in X(\Omega)$
satisfying that, for any $j\in\{1,\ldots,N\}$,
the $j^\mathrm{th}$-weak partial derivative $\partial_j f$ of $f$
exists and $\partial_j f\in X(\Omega)$.
Moreover, for any $f\in W^{1,X}(\Omega)$, let
\begin{align*}
\|f\|_{W^{1,X}(\Omega)}:=\|f\|_{X(\Omega)}+
\sum_{j=1}^N\left\|\partial_j f\right\|_{X(\Omega)}.
\end{align*}
\end{definition}

\subsection{Decomposition of Bounded Lipschitz Domains}\label{sec2.2}
In this subsection, we  give the following elaborate
decomposition of  bounded Lipschitz domains.
\begin{lemma}
\label{lem:decom}
Let  $N\in\mathbb{N}\cap[2,\infty)$ and  $\Omega\subset\mathbb R^N$ be a bounded Lipschitz domain.
Let $m\in\mathbb{N}$.
Then there exist a constant
$\ell_0\in (0,1)$,
depending only on  $\Omega$,
and a finite family of  sets $\{U_{m,\alpha}\}_{\alpha=1}^{J_m}$ such that the following statements hold:
\begin{itemize}
\item  [$\mathrm{(i)}$]
For any  given
$\alpha\in\{1,\ldots,J_m\}$, the set $U_{m,\alpha}$ is one of the following:
\begin{itemize}
\item  [$\mathrm{(i)_1}$]
 an axis-parallel  cube of edge length $\ell_m:=2^{-m}\ell_0$ contained in $\Omega$;
\item  [$\mathrm{(i)_2}$]
there exist  a
rigid motion $T_\alpha:\mathbb R^N\to\mathbb R^N$,
an axis-parallel cube $Q'_\alpha\subset\mathbb R^{N-1}$ of edge length $\ell_m:=2^{-m}\ell_0$, and
a Lipschitz function $\Gamma_\alpha:Q'_\alpha\to\mathbb R$ whose Lipschitz constant is
bounded by the corresponding one of $\Omega$.
Let
\begin{align*}
H_\alpha:=Q'_\alpha\times\Bigl(0,\frac{\ell_m}{2}\Bigr)
\qquad\text{and}\qquad
\Phi_\alpha(x',t):=(x',\,t+\Gamma_\alpha(x')).
\end{align*}
Then $U_{m,\alpha}=T_\alpha^{-1}\bigl(\Phi_\alpha(H_\alpha)\bigr)$.
\end{itemize}
\item  [$\mathrm{(ii)}$]
\begin{align*}
\Omega\subset \bigcup_{\alpha=1}^{J_m} U_{m,\alpha}.
\end{align*}
\item  [$\mathrm{(iii)}$]
For any $\alpha\in\{1,\ldots,J_m\}$,
$U_{m,\alpha}\subset \Omega,$
$\operatorname{diam}(U_{m,\alpha})\leq C_{\mathrm{diam}}\,\ell_m,$
and $\frac{1}{2}\ell_m^N\leq |U_{m,\alpha}|\leq\ell_m^N,$
where  $C_{\mathrm{diam}}$ is a  positive constant depending only on $\Omega.$
\item  [$\mathrm{(iv)}$]
\begin{align*}
\sum_{\alpha=1}^{J_m}\mathbf 1_{U_{m,\alpha}}\le N_0,
\end{align*}
where $N_0$ depends  only on $\Omega.$
\item  [$\mathrm{(v)}$]
There exists a   finite family of measurable sets  $\{E_{m,\alpha}\}_{\alpha=1}^{J_m}$ such that, for any $\alpha\in\{1,\ldots,J_m\}$, $E_{m,\alpha}\subset U_{m,\alpha}$, $\{E_{m,\alpha}\}_{\alpha=1}^{J_m}$ is  pairwise disjoint, and
\begin{align*}
\Omega= \bigcup_{\alpha=1}^{J_m}E_{m,\alpha}.
\end{align*}
\end{itemize}
\end{lemma}
\begin{proof}
Let $x_0\in\partial \Omega.$
By the definition of bounded Lipschitz domains, we  find that
there exist $r_{x_0}\in (0,\infty)$,
a rigid motion $T_{x_0}:\mathbb R^N\to\mathbb R^N$, and
a Lipschitz function
$\gamma_{x_0}:\ (-r_{x_0},r_{x_0})^{N-1}\to\mathbb R$
such that  $T_{x_0}(x_0)=0,$
\begin{align*}
T_{x_0}(\Omega\cap T_{x_0}^{-1}(Q_{x_0}))
=
\bigl\{(x',x_N)\in Q_{x_0}:\ x_N>\gamma_{x_0}(x')\bigr\},
\end{align*}
and
\begin{align*}
T_{x_0}(\partial\Omega\cap T_{x_0}^{-1}(Q_{x_0}))
=
\bigl\{(x',x_N)\in Q_{x_0}:\ x_N=\gamma_{x_0}(x')\bigr\},
\end{align*}
where
\begin{align*}
Q'_{x_0}:=\Bigl(-r_{x_0},r_{x_0}\Bigr)^{N-1}
\ \ \text{and}\ \
Q_{x_0}:=Q'_{x_0}\times(-r_{x_0},r_{x_0}).
\end{align*}
Obviously,  there exists $\delta_{x_0}\in (0,1)$ such that, for any $x'\in \delta_{x_0}Q'_{x_0},$
\begin{align*}
|\gamma_{x_0}(x')|
\leq
\mathrm{Lip}(\gamma_{x_0};Q'_{x_0})|x'|
\leq
\mathrm{Lip}(\gamma_{x_0};Q'_{x_0})\sqrt{N-1}\delta_{x_0} r_{x_0}
<\frac{r_{x_0}}{8}.
\end{align*}
Since
the family of open balls
$\{B(x_0,\frac{\delta_{x_0}r_{x_0}}{16}):\ x_0\in\partial\Omega\}$
is an open cover of $\partial\Omega$ and
$\partial\Omega$ is compact,
it follows that
there exist finitely many boundary points $x^{(1)},\dots,x^{(L)}\in\partial\Omega$
such that
\[
\partial\Omega\subset \bigcup_{\lambda=1}^L B\left(x^{(\lambda)},\frac{\delta_{\lambda}r_{\lambda}}{16}\right),
\]
where, for simplicity of notation, let $r_\lambda:=r_{x^{(\lambda)}},$
$T_\lambda:=T_{x^{(\lambda)}}$, $\gamma_\lambda:=\gamma_{x^{(\lambda)}},$ $Q'_\lambda:=(-r_\lambda,r_\lambda)^{N-1}$,  and $Q_\lambda:=Q'_\lambda\times(-r_\lambda,r_\lambda).$
Then
\begin{equation}\label{eq:chart-lambda}
T_\lambda(\Omega)\cap Q_\lambda
=
\{(x',x_N)\in Q_\lambda:\ x_N>\gamma_\lambda(x')\}.
\end{equation}
and, for any $x'\in \delta_\lambda Q'_\lambda,$
\begin{equation}\label{eq:small-height-lambda}
|\gamma_\lambda(x')| <
\frac{r_\lambda}{8}.
\end{equation}
Let
\begin{align*}
L_*:=\max_{1\le\lambda\le L}\operatorname{Lip}(\gamma_\lambda;Q'_\lambda),
\
r_*:=\min_{1\le\lambda\le L} r_\lambda,\ \text{and}\
\delta:=\min_{1\leq\lambda\leq L}\delta_{\lambda}.
\end{align*}
Let
\begin{align}\label{eq:M-choice}
M\in\mathbb{N}\cap(2(1+L_\ast)\sqrt{N},\infty)\ \ \text{and}\ \ \ell_0\in\left(0,\min\left\{1,\frac{\delta r_*}{16\sqrt N},\frac{\delta r_*}{8M}\right\}\right).
\end{align}
Let
\begin{align*}
\Omega_m^{\mathrm{int}}
:=
\left\{x\in\Omega:\  \mathop{\mathrm{dist}\,}(x,\partial\Omega)>\sqrt N\,\ell_m\right\}
\end{align*}
and
\begin{align*}
\Omega_m^{\mathrm{bd}}
:=
\Omega\setminus\Omega_m^{\mathrm{int}}
=
\left\{x\in\Omega:\  \mathop{\mathrm{dist}\,}(x,\partial\Omega)\le \sqrt N\,\ell_m\right\}.
\end{align*}

\textbf{Step 1: Cover of $\Omega_m^{\mathrm{int}}$.}
For any $\sigma=(\sigma_1,\dots,\sigma_N)\in\{0,\frac{1}{2}\}^N$
and  $k=(k_1,\dots,k_N)\in\mathbb Z^N$, define
\[
Q_{m,k}^{\sigma}
:=
\ell_m(k+\sigma)+(0,\ell_m)^N.
\]
Then it is easy to verify that
\begin{align}\label{2.5x}
\mathbb{R}^{N}=\bigcup_{\sigma,k}Q^\sigma_{m,k}\ \ \text{and}\ \ \sum_{\sigma,k}\mathbf{1}_{Q^\sigma_{m,k}}\leq 2^N.
\end{align}
Let $\{U_{m,\alpha}^{\mathrm{int}}\}_{\alpha\in I_m^{\mathrm{int}}}$ denote the subfamily of all cubes $Q_{m,k}^{\sigma}$
such that $Q_{m,k}^{\sigma}\cap \Omega_m^{\mathrm{int}}
\neq\emptyset.$
Obviously,
$\Omega_m^{\mathrm{int}}\subset
\bigcup_{\alpha\in I_m^{\mathrm{int}}}U_{m,\alpha}^{\mathrm{int}}.$
Since $\Omega$ is bounded,
from \eqref{2.5x}, we deduce  that $I_m^{\mathrm{int}}$ is finite.
Moreover, let $x\in U_{m,\alpha}^{\mathrm{int}}\cap \Omega_m^{\mathrm{int}}$
with $\alpha\in I_m^{\mathrm{int}}.$
Then, for any $y\in U_{m,\alpha}^{\mathrm{int}},$ we have
\begin{align*}
|x-y|\le \mathrm{diam}\,(U_{m,\alpha}^{\mathrm{int}})=\sqrt N\,\ell_m< \mathop{\mathrm{dist}\,}(x,\partial\Omega).
\end{align*}
Thus, for any $\alpha\in I_m^{\mathrm{int}}$, $U_{m,\alpha}^{\mathrm{int}}
\subset\Omega,$ which is also the
desired property.

\textbf{Step 2: Cover of $\Omega_m^{\mathrm{bd}}$.}
Let  $\lambda\in\{1,\dots,L\}$ and
\[
\widetilde Q'_\lambda:=\left(-\frac{\delta_\lambda r_\lambda}{8},\frac{\delta_\lambda r_\lambda}{8}\right)^{N-1}\subset\mathbb R^{N-1}.
\]
For any
\[
\sigma'=(\sigma_1,\dots,\sigma_{N-1})\in\left\{0,\frac{1}{2}\right\}^{N-1}
\]
and  $k'\in\mathbb Z^{N-1}$, let
\[
Q_{m,k'}^{\sigma'}
:=
\ell_m(k'+\sigma')+(0,\ell_m)^{N-1}
\subset\mathbb R^{N-1}.
\]
Then
\begin{align}\label{houhui}
\mathbb{R}^{N-1}=\bigcup_{\sigma',k'}Q^{\sigma'}_{m,k'}\ \ \text{and}\ \ \sum_{\sigma',k'}\mathbf{1}_{Q^{\sigma'}_{m,k'}}\leq 2^{N-1}.
\end{align}
Let $\{Q'_{\lambda,\beta}\}_{\beta\in I_{m,\lambda}}$ be the subfamily of all cubes satisfying $Q_{m,k'}^{\sigma'}\cap \widetilde Q'_\lambda\neq\emptyset.$
Obviously,
$$\widetilde Q'_\lambda\subset \bigcup_{\beta\in I_{m,\lambda}}Q'_{\lambda,\beta}.$$
From \eqref{houhui}, we further infer that
$I_{m,\lambda}$ is finite. Moreover,  let
$y'\in Q'_{\lambda,\beta}\cap \widetilde Q'_\lambda$ with $\beta\in I_{m,\lambda}$.
Then, for any $x'\in Q'_{\lambda,\beta}$, we have
\begin{align*}
|x'|_\infty
\leq
|x'-y'|_\infty+|y'|_\infty
\leq
\ell_m+\frac{\delta_\lambda r_\lambda}{8}
<
\delta_\lambda r_\lambda.
\end{align*}
Thus, for any $\beta\in I_{m,\lambda}$, $Q'_{\lambda,\beta}\subset \delta_\lambda Q'_\lambda.$

For any $j\in\{1,\ldots,M-1\}$ and $\tau\in\{0,1\}$, let
\begin{align*}
c_{m,j}^{(0)}:=j\frac{\ell_m}{2},\,
c_{m,j}^{(1)}:=\left(j+\frac{1}{2}\right)\frac{\ell_m}{2},\ \ \text{and}\ \ I_{m,j}^{(\tau)}=c_{m,j}^{(\tau)}+\left(0,\frac{\ell_m}{2}\right).
\end{align*}
Then
\begin{equation}\label{eq:layer-cover}
\left(0,M\frac{\ell_m}{2}\right)\subset
\left[\bigcup_{j=0}^{M-1} I_{m,j}^{(0)}\right]
\cup
\left[\bigcup_{j=0}^{M-1} I_{m,j}^{(1)}\right]
\end{equation}
and
\begin{align}\label{houhui2}
\sum_{j=0}^{M-1}\mathbf{1}_{I^{(0)}_{m,j}}+\mathbf{1}_{I^{(1)}_{m,j}}
\leq2.
\end{align}
For any $\beta\in I_{m,\lambda},$ let
\[
H_{\lambda,\beta}:=
Q'_{\lambda,\beta}\times\left(0,\frac{\ell_m}{2}\right).
\]
For any $\tau\in\{0,1\}$,
$\beta\in I_{m,\lambda},$ $j\in\{0,1,\ldots,M-1\}$, and $(x',t)\in H_{\lambda,\beta}$, let
\begin{align*}
\Phi_{\lambda,\beta,j}^{(\tau)}(x',t)
:=
\left(x',\,t+\gamma_\lambda(x')+c_{m,j}^{(\tau)}\right)\ \ \text{and}\ \ U_{m,\lambda,\beta,j}^{(\tau)}
:=
T_\lambda^{-1}\left(\Phi_{\lambda,\beta,j}^{(\tau)}(H_{\lambda,\beta})\right).
\end{align*}
Now, we show that
\begin{align}\label{ljdx2}
\Omega_m^{\mathrm{bd}}\subset
\bigcup_{\tau,\lambda,\beta,j} U_{m,\lambda,\beta,j}^{(\tau)}.
\end{align}
Let $x\in\Omega_m^{\mathrm{bd}}$.
Then there exists $\xi\in\partial \Omega$ such that
\begin{align*}
|x-\xi|=\mathop{\mathrm{dist}\,}(x,\partial\Omega)\le \sqrt N\,\ell_m.
\end{align*}
Since the balls $\{B(x^{(\lambda)},\frac{\delta_\lambda r_{\lambda}}{16})\}_{\lambda=1}^L$ cover $\partial\Omega$,  it follows that there exists
$\lambda\in\{1,\dots,L\}$ such that
$\xi\in B(x^{(\lambda)},\frac{\delta_\lambda r_{\lambda}}{16}).$
Thus,
\[
|x-x^{(\lambda)}|
\le
|x-\xi|+|\xi-x^{(\lambda)}|
<
\sqrt{N}\ell_m+\frac{\delta_\lambda r_\lambda}{16}
\leq
\sqrt{N}\ell_0+\frac{ \delta_\lambda r_\lambda}{16}
<\frac{\delta_\lambda r_\lambda}{8},
\]
where we used \eqref{eq:M-choice} in the last step.
This implies that
\begin{align}\label{ljdx}
T_\lambda(x)\in B\left(\mathbf{0},\frac{\delta_\lambda r_\lambda}{8}\right)
\subset Q_\lambda.
\end{align}
Thus,
\begin{align*}
T_\lambda(x)=(x',x_N)\ \ \text{and}\ \ x'\in \left(-\frac{\delta_\lambda r_\lambda}{8},\frac{\delta_\lambda r_\lambda}{8}\right)^{N-1}=\widetilde Q'_\lambda.
\end{align*}
By \eqref{houhui}, we find that there exists $\beta\in I_{m,\lambda}$ such that $x'\in Q^{'}_{\lambda,\beta}.$
Let $t_x:=x_N-\gamma_\lambda(x').$
Using the fact that $\xi\in B(x^{(\lambda)},\frac{\delta_\lambda r_{\lambda}}{16})\subset T^{-1}_\lambda(Q_\lambda)$, we  conclude that
$T_\lambda(\xi)=(\xi',\gamma_\lambda(\xi'))$
for some $\xi'\in Q'_\lambda$.
By this and \eqref{ljdx}, we obtain
\begin{align*}
0<t_x
&=
\bigl(x_N-\gamma_\lambda(\xi')\bigr)
+
\bigl(\gamma_\lambda(\xi')-\gamma_\lambda(x')\bigr)
\leq
|x_N-\gamma_\lambda(\xi')|+L_*|x'-\xi'|\\
&\leq
(1+L_\ast)|x-\xi|
\leq
(1+L_\ast)\sqrt{N}\ell_m
<\frac{M\ell_m}{2},
\end{align*}
where we used \eqref{eq:M-choice} in the last step.
This, combined with \eqref{eq:layer-cover},  implies that there exist $j\in\{0,\dots,M-1\}$,
$\tau\in\{0,1\}$, and $t\in (0,\frac{\ell_m}{2})$ such that
$t_x=t+c_{m,j}^{(\tau)}\in I_{m,j}^{(\tau)}.$
From this and the fact that $(x',t)\in H_{\lambda,\beta}$,  we deduce  that
\[
T_\lambda(x)
=
(x',x_N)
=
\bigl(x',\,\gamma_\lambda(x')+t_x\bigr)
=
\bigl(x',\,\gamma_\lambda(x')+c_{m,j}^{(\tau)}+t\bigr)
=
\Phi_{\lambda,\beta,j}^{(\tau)}(x',t).
\]
and hence $x\in U_{m,\lambda,\beta,j}^{(\tau)}.$
This proves \eqref{ljdx2}.

\textbf{Step 3: Verification of the remaining properties.}
Let $\tau\in\{0,1\},$ $\lambda\in\{1,\ldots,L\},$
$j\in\{0,1,\ldots,M-1\},$ and $\beta\in I_{m,\lambda}.$
Next, we show that
\begin{align*}
U_{m,\lambda,\beta,j}^{(\tau)}\subset \Omega.
\end{align*}
To this end, we only need to prove that
\begin{align}\label{xiaguang}
\Phi_{\lambda,\beta,j}^{(\tau)}(H_{\lambda,\beta})
\subset
T_{\lambda}\left(\Omega\cap T^{-1}_\lambda(Q_\lambda)\right).
\end{align}
Let $(x',t)\in H_{\lambda,\beta}$ and
\begin{align*}
x:=\Phi_{\lambda,\beta,j}^{(\tau)}(x',t)
=
\left(x',\gamma_\lambda(x')+c_{m,j}^{(\tau)}+t\right).
\end{align*}
By the fact that $x'\in Q'_{\lambda,\beta}\subset \delta_\lambda Q'_\lambda$ and \eqref{eq:small-height-lambda}, we obtain
\begin{align*}
x_N=\gamma_\lambda(x')+c_{m,j}^{(\tau)}+t>\gamma_\lambda(x')>-\frac{r_\lambda}{8}>-r_\lambda
\end{align*}
and
\begin{align*}
x_N&=\gamma_\lambda(x')+c_{m,j}^{(\tau)}+t
<\gamma_\lambda(x')+\left(M-1+\frac{1}{2}\right)\frac{\ell_m}{2}+\frac{\ell_m}{2}\\
&\leq
\frac{r_\lambda}{8}+\left(M+\frac{1}{2}\right)\frac{\ell_0}{2}
<\frac{r_\lambda}{8}+\left(M+\frac{1}{2}\right)\frac{r_\lambda}{16M}<r_\lambda.
\end{align*}
Using this and  \eqref{eq:chart-lambda}, we conclude that \eqref{xiaguang} holds.

Since rigid motions preserve distances and the Lebesgue measure,
it follows that
\begin{align*}
|U_{m,\lambda,\beta,j}^{(\tau)}|
=|\Phi_{\lambda,\beta,j}^{(\tau)}(H_{\lambda,\beta})|
=
|Q'_{\lambda,\beta}|\cdot \frac{\ell_m}{2}
=
\ell_m^{N-1}\cdot \frac{\ell_m}{2}
=
\frac12\,\ell_m^N.
\end{align*}
Let
\begin{align*}
x:=\Phi_{\lambda,\beta,j}^{(\tau)}(x',t)
\ \ \text{and}\ \
y:=\Phi_{\lambda,\beta,j}^{(\tau)}(y',s)
\end{align*}
with $(x',t),(y',s)\in H_{\lambda,\beta}$.
Then
\begin{align*}
|x-y|
&=
\bigl|(x'-y',\,t-s+\gamma_\lambda(x')-\gamma_\lambda(y'))\bigr|
\leq
|x'-y'|+|t-s|+|\gamma_\lambda(x')-\gamma_\lambda(y')| \\
&\leq
(1+L_*)|x'-y'|+|t-s|
\leq
(1+L_\ast)\sqrt{N-1}\ell_m+\frac{\ell_m}{2}.
\end{align*}
This  implies that
\begin{align*}
\operatorname{diam}\left(U_{m,\lambda,\beta,j}^{(\tau)}\right)
\le
\left((1+L_*)\sqrt{N-1}+\frac12\right)\ell_m.
\end{align*}
By \eqref{houhui} and \eqref{houhui2}, we have
\begin{align*}
\sum_{\lambda=1}^L\sum_{j=0}^{M-1}\sum_{\beta\in I_{m,\lambda}}\sum_{\tau\in\{0,1\}}\mathbf{1}_{U_{m,\lambda,\beta,j}^{(\tau)}}
\leq L2^{N-1}2=L2^N.
\end{align*}

Combining the covers of $\Omega^{\mathrm{int}}_m$ and $\Omega^{\mathrm{bd}}_m$, we obtain the cover $\{U_{m,\alpha}\}_{\alpha=1}^{J_m}$ of $\Omega$.
Finally, for any $\alpha\in \{2,\dots,J_m\}$,  let
\begin{align*}
E_{m,1}:=U_{m,1}\cap\Omega
\end{align*}
and
\begin{align*}
E_{m,\alpha}
:=
(U_{m,\alpha}\cap\Omega)\setminus \bigcup_{\beta=1}^{\alpha-1} E_{m,\beta}.
\end{align*}
Then $\{E_{m,\alpha}\}_{\alpha=1}^{J_m}$ satisfies (v). This finishes the proof of Lemma \ref{lem:decom}.
\end{proof}
\subsection{Weighted  Fractional Poincar\'e Inequality On Cubes}\label{sec2.3}
In  this subsection, we show the
weighted  fractional Poincar\'e inequality on cubes.
Let the \emph{symbol}
$\mathcal{Q}_{\mathrm{ax}}$ denote the family of all cubes in $\mathbb{R}^{N}$
whose edges are parallel with the coordinate axes and
let the \emph{symbol} $\mathcal{Q}_{\mathrm{all}}$ denote the family of all cubes with  arbitrary orientation in $\mathbb{R}^{N}$. For any given $Q\in \mathcal{Q}_{\mathrm{ax}},$
the \emph{symbol} $\mathcal{D}(Q)$
denotes the collection of all dyadic subcubes of $Q.$
\begin{definition}
Let $p,s\in (0,\infty)$ and
$w\in L^1_{\mathrm{loc}}$ with $w\geq 0$, and
let $Q\in\mathcal{Q}_{\mathrm{ax}}$ and
$a:\ \mathcal{D}(Q)\to [0,\infty)$.
We say $a\in SD_p^s(w;Q)$
if there exists a positive constant $C$ such that,
for any
family of disjoint dyadic subcubes
$\{P_i\}_{i\in I}\subset \mathcal{D}(Q)$,
\begin{align*}
\left[
\sum_{i\in I} a(P_i)^p\,\frac{w(P_i)}{w(Q)}
\right]^{\frac{1}{p}}
\leq
C
\left(
\frac{\big|\bigcup_{i\in I} P_i\big|}{|Q|}
\right)^{\frac{1}{s}}
a(Q).
\end{align*}
The best possible constant $C$,
namely  the infimum of the constants in the last inequality, is
denoted by $a_{SD_p^s(w;Q)}$.
\end{definition}
By an argument  similar to that used in the proof of \cite[Theorem 5.3]{llo22}, we have the  following conclusion; we omit the details here.
\begin{proposition}\label{self}
Let $p,s\in[1,\infty)$,
$w\in L^1_{\mathrm{loc}}$ with $w\geq 0$,
$Q\in\mathcal{Q}_{\mathrm{ax}}$,
and $a\in SD_p^s(w;Q)$.
Let
$f\in L^1_{\mathrm{loc}}$ be such that, for any $P\in\mathcal{D}(Q),$
\begin{align*}
\frac1{|P|}\int_P |f-f_P|
\leq a(P).
\end{align*}
Then there exists a positive constant $C=C_{N}$ such that
\begin{align*}
\left[
\frac1{w(Q)}\int_{Q}|f(x)-f_{Q}|^p\,dw(x)
\right]^{\frac{1}{p}}
\le
Csa_{SD_p^s(w;Q)}a(Q).
\end{align*}
\end{proposition}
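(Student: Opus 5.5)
We prove Proposition \ref{self} by a stopping-time (Calderón--Zygmund type) decomposition adapted to the oscillation bound, following the scheme of \cite[Theorem 5.3]{llo22}. By homogeneity we may assume $a(Q)=1$. We fix a large dimensional constant $\lambda:=2^{N+1}$ and construct generations of stopping cubes. Let $\mathcal{S}_0:=\{Q\}$. Given $P\in\mathcal{S}_k$, let $\mathrm{ch}(P)$ be the maximal dyadic subcubes $R\subset P$ such that
\begin{align*}
\frac{1}{|R|}\int_R|f-f_P|>\lambda\, a(P).
\end{align*}
Set $\mathcal{S}_{k+1}:=\bigcup_{P\in\mathcal{S}_k}\mathrm{ch}(P)$. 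The cubes in $\mathrm{ch}(P)$ are disjoint. By the weak $(1,1)$ bound for the dyadic maximal function and the hypothesis $\frac1{|P|}\int_P|f-f_P|\le a(P)$, we get $\sum_{R\in\mathrm{ch}(P)}|R|\le \lambda^{-1}|P|$. By maximality, the parent $\widehat R$ of each $R\in\mathrm{ch}(P)$ satisfies $|f_R-f_P|\le \frac{1}{|R|}\int_R|f-f_P|\le 2^N\lambda a(P)$, using the average over $\widehat R$. Lebesgue differentiation gives $|f-f_P|\le\lambda a(P)$ a.e. on $P\setminus\bigcup\mathrm{ch}(P)$.

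Iterating these three facts, for almost every $x\in Q$ we telescope along the chain of stopping cubes $Q=P_0\supset P_1\supset\cdots\ni x$:
\begin{align*}
|f(x)-f_Q|\lesssim_N \sum_{k\ge0}\sum_{P\in\mathcal{S}_k}a(P)\mathbf 1_P(x).
\end{align*}
The chain is finite for a.e.\ $x$ because $|\bigcup\mathcal{S}_k|\le\lambda^{-k}|Q|$. Taking $L^p(w)$ norms normalized by $w(Q)$ and using Minkowski's inequality over $k$ gives
\begin{align*}
\left[\frac1{w(Q)}\int_Q|f-f_Q|^p\,dw\right]^{\frac1p}
\lesssim\sum_{k\ge0}\left[\sum_{P\in\mathcal{S}_k}a(P)^p\frac{w(P)}{w(Q)}\right]^{\frac1p}.
\end{align*}
Here we used that each $\mathcal{S}_k$ is a disjoint family. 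Applying the $SD^s_p(w;Q)$ condition to the disjoint family $\mathcal{S}_k$, the $k$-th term is at most $a_{SD_p^s(w;Q)}\,(\lambda^{-k})^{1/s}a(Q)$.

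Summing the geometric series gives $\sum_k\lambda^{-k/s}=(1-\lambda^{-1/s})^{-1}\lesssim s$, since $1-2^{-(N+1)/s}\gtrsim 1/s$ for $s\ge1$. This yields the claimed bound $C_N\,s\,a_{SD^s_p(w;Q)}a(Q)$.

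The main obstacle is the pointwise telescoping step. The oscillation hypothesis controls $f_R-f_P$ only via $a(P)$, not via $a(R)$. So the jump from $f_P$ to $f_R$ must be charged to $a(P)$, and the local error inside $R$ must be charged to the next generation. This bookkeeping needs care so that each point sees each generation at most once and the constant stays dimensional. The remaining steps are routine.
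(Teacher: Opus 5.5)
Your proposal is correct and is essentially the argument the paper invokes by citing \cite[Theorem 5.3]{llo22}. That argument has three steps: a stopping-time sparse bound $|f-f_Q|\lesssim_N\sum_{k\ge0}\sum_{P\in\mathcal S_k}a(P)\mathbf 1_P$ a.e., where the generations $\mathcal S_k$ are disjoint with $|\bigcup\mathcal S_k|\le\lambda^{-k}|Q|$; then Minkowski's inequality over $k$ and the $SD_p^s(w;Q)$ condition applied to each generation; and finally $\sum_k\lambda^{-k/s}\lesssim s$. One small point: the normalization $a(Q)=1$ needs $a(Q)>0$. It is harmless, since your argument never uses it, and if $a(Q)=0$ the hypothesis at $P=Q$ already forces $f=f_Q$ a.e. on $Q$.
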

By Proposition \ref{self}, we have the following lemma.
\begin{lemma}\label{poincare1}
Let $p,q\in [1,\infty)$ and  $\delta\in(\frac12,1)$. Then there exists a  positive constant
$C=C_{N,p,q}$ such that, for any  cube $Q\in\mathcal{Q}_{\mathrm{ax}}$ and
 any $u,w\in L^1_{\mathrm{loc}}$ with $w\geq 0,$
\begin{align*}
&\left[\int_Q |u(x)-u_Q|^p\,w(x)\,dx\right]^{\frac{1}{p}}\\
&\quad\leq
C(1-\delta)^{\frac{1}{q}}\ell(Q)^\delta
\left\{
\int_Q
\left[
\int_Q \frac{|u(x)-u(y)|^q}{|x-y|^{N+\delta q}}\,dy
\right]^{\frac{p}{q}}
\mathcal{M}(w\mathbf{1}_Q)(x)\,dx
\right\}^{\frac{1}{p}}.
\end{align*}
\end{lemma}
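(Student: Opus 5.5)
We apply Proposition \ref{self} with the weight $w$, the exponent $p$, and a suitable functional $a$ on $\mathcal{D}(Q)$. For $P\in\mathcal{D}(Q)$ we define
\begin{align*}
a(P):=K(1-\delta)^{\frac1q}\ell(P)^{\delta}\left\{\frac{1}{w(P)}\int_P g_P(x)^p\,\mathcal{M}(w\mathbf{1}_Q)(x)\,dx\right\}^{\frac1p},
\qquad
g_P(x):=\left[\int_P\frac{|u(x)-u(y)|^q}{|x-y|^{N+\delta q}}\,dy\right]^{\frac1q}.
\end{align*}
The constant $K$ is fixed later, and the quotient is read as $+\infty$, or the case is trivial, when $w(P)=0$. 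The conclusion of Proposition \ref{self}, multiplied by $w(Q)^{1/p}$, gives exactly the claimed inequality with $g_Q$. Two things must be checked: the oscillation hypothesis $|P|^{-1}\int_P|u-u_P|\le a(P)$, and the bound $a_{SD^s_p(w;Q)}\lesssim 1$ for some fixed $s$, say $s=p$.

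\textbf{Step 1 (oscillation).} By Jensen's and H\"older's inequalities, and since $|x-y|\le\sqrt N\ell(P)$ on $P\times P$,
\begin{align*}
\frac1{|P|}\int_P|u-u_P|\le\frac1{|P|^2}\int_P\int_P|u(x)-u(y)|\,dy\,dx
\lesssim \ell(P)^{\delta}\,\frac1{|P|}\int_P g_P(x)\,dx .
\end{align*}
This crude bound loses the factor $(1-\delta)^{1/q}$. To recover it, I would use the sharp BBM-type averaging instead: split $|x-y|$ into dyadic shells $2^{-k}\ell(P)$, telescope the averages over nested dyadic cubes, and estimate $\sum_k 2^{-k(1-\delta)}$-weighted sums by H\"older's inequality in $k$. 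This yields
\[
\frac1{|P|}\int_P|u-u_P|\lesssim(1-\delta)^{1/q}\ell(P)^\delta\frac1{|P|}\int_P g_P .
\]
The $(1-\delta)^{-1}$ coming from $\sum_k 2^{-kq(1-\delta)}$ combines with the kernel to give precisely $(1-\delta)^{1/q}$. This is the standard Bourgain--Brezis--Mironescu/Maz'ya--Shaposhnikova mechanism and I would carry it out on dyadic children.

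Next, convert the average to $w$-weighted form by H\"older's inequality:
\[
\frac1{|P|}\int_P g_P\le\Big(\int_P g_P^p\,\mathcal{M}(w\mathbf 1_Q)\Big)^{1/p}\Big(\int_P \mathcal{M}(w\mathbf 1_Q)^{-p'/p}\Big)^{1/p'}|P|^{-1}.
\]
On $P\subset Q$ we have $\mathcal{M}(w\mathbf 1_Q)\ge w(P)/|P|$, so the second factor is at most $|P|^{1/p'}(|P|/w(P))^{1/p}$. Altogether this is $\le(\int_P g_P^p\,\mathcal{M}(w\mathbf 1_Q)/w(P))^{1/p}$, which gives the oscillation hypothesis.

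\textbf{Step 2 (sparse condition).} Let $\{P_i\}$ be disjoint dyadic subcubes of $Q$. Since $g_{P_i}\le g_Q$ on $P_i$ and $\ell(P_i)^{\delta p}=(|P_i|/|Q|)^{\delta p/N}\ell(Q)^{\delta p}$,
\[
\sum_i a(P_i)^p\frac{w(P_i)}{w(Q)}\le K^p(1-\delta)^{p/q}\ell(Q)^{\delta p}\max_i\Big(\frac{|P_i|}{|Q|}\Big)^{\delta p/N}\frac{1}{w(Q)}\int_Q g_Q^p\,\mathcal{M}(w\mathbf 1_Q).
\]
Since $\delta>1/2$, we have $\max_i(|P_i|/|Q|)^{\delta p/N}\le(|\bigcup P_i|/|Q|)^{p/(2N)}$. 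Hence $a\in SD^{s}_p(w;Q)$ with $s=2N$ and constant $1$. Proposition \ref{self} then yields the lemma with $C=C_N\cdot 2N\cdot K$.

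\textbf{Main obstacle.} The delicate point is Step 1's extraction of the sharp factor $(1-\delta)^{1/q}$ uniformly in $\delta\in(\frac12,1)$. I expect to handle it through the dyadic telescoping with the geometric series $\sum_k2^{-k(1-\delta)q}\sim(1-\delta)^{-1}$, arranged so that it pairs with the correct power. If that proves awkward, I would instead average over translates $y=x+h$ with $|h|\sim2^{-k}\ell(P)$ and apply the classical estimate $\|u(\cdot+h)-u\|\le\sum$ of increments.
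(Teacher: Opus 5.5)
Your architecture is the paper's: the same functional $a(P)$ built from $\mathcal{D}^{\delta}_{q,P}u$ and $\mathcal{M}(w\mathbf{1}_Q)$, the bound $w(P)/|P|\le\mathcal{M}(w\mathbf{1}_Q)$ on $P$, the sparse estimate via $g_{P_i}\le g_Q$ and $\ell(P_i)=(|P_i|/|Q|)^{1/N}\ell(Q)$, and then Proposition \ref{self}. Your H\"older conversion and your exponent $s=2N$ (the paper gets $s=N/\delta$) are fine.

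\textbf{The gap is in Step 1.} This is the only real analytic input, and you assert it rather than prove it. The mechanism you describe cannot produce $(1-\delta)^{1/q}$. Telescoping over nested dyadic cubes writes $|u-u_P|$ as a sum of oscillations over the scales $2^{-k}\ell(P)$, $k\ge0$, including the top one. H\"older in $k$ then costs $(\sum_k(2^{-k}\ell(P))^{\delta q'})^{1/q'}\sim\ell(P)^\delta$, with no decay in $1-\delta$. A function constant on each dyadic child of $P$ shows this concretely: all terms with $k\ge1$ vanish, so you are left with the original problem.

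The BBM gain comes from the reverse comparison: the increment at scale $\ell$ is at most $2^k$ times the increment at scale $2^{-k}\ell$. This is what turns $\sum_{m\ge0}2^{-mq(1-\delta)}\sim[q(1-\delta)]^{-1}$ into a \emph{lower} bound for the seminorm. Your fallback via translates points the right way. However, for your mixed seed (outer $L^1_x$, inner $L^q_y$) it needs two steps you do not mention. The first is Minkowski's integral inequality,
\[
\int_P g_P(x)\,dx\ge\left[\int_{\mathbb R^N}\frac{\omega_P(h)^q}{|h|^{N+\delta q}}\,dh\right]^{\frac1q},
\qquad
\omega_P(h):=\int_{P\cap(P-h)}|u(x+h)-u(x)|\,dx.
\]
The second is convexity of $P$, which gives $\omega_P(2^kh)\le 2^k\omega_P(h)$. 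Together with $\frac1{|P|}\int_P|u-u_P|\le|P|^{-2}\int_{P-P}\omega_P(h)\,dh$, the geometric-series argument then closes.

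\textbf{How the paper handles it.} The paper does not prove the seed from scratch. It takes it from \cite[(1.8)]{hmpv23}, and for $q>p$ it passes through the auxiliary order $\sigma:=2\delta-1$, using $\mathcal{D}^{\sigma}_{q,P}u\lesssim\ell(P)^{1-\delta}\mathcal{D}^{\delta}_{q,P}u$ and $1-\sigma=2(1-\delta)$. This is where the paper needs $\delta>\frac12$; in your proposal that hypothesis enters only through $s=2N$.

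The same device gives your $L^1$-outer seed in two lines for $q>1$ (for $q=1$ take $\sigma=\delta$). Start from the $L^1$ BBM inequality
\[
\frac1{|P|}\int_P|u-u_P|\lesssim(1-\sigma)\ell(P)^{\sigma}\frac1{|P|}\int_P\int_P\frac{|u(x)-u(y)|}{|x-y|^{N+\sigma}}\,dy\,dx,
\]
and apply H\"older in $y$:
\[
\int_P\frac{|u(x)-u(y)|}{|x-y|^{N+\sigma}}\,dy
\le g_P(x)\left[\int_P|x-y|^{-N+q'(1-\delta)}\,dy\right]^{\frac1{q'}}
\lesssim g_P(x)\,\frac{\ell(P)^{1-\delta}}{[q'(1-\delta)]^{1/q'}}.
\]
Since $(1-\sigma)(1-\delta)^{-1/q'}=2(1-\delta)^{1/q}$, this gives exactly the factor you need. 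With either repair in place, the rest of your argument goes through.
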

\begin{proof}
Let $Q\in\mathcal{Q}_{\mathrm{ax}}$ and $u,w\in L^1_{\mathrm{loc}}$ satisfy $w\geq 0.$
We first show that there exists a positive constant $C$, depending only on $N,$ $p,$ and $q$, such that, for any $P\in\mathcal{D}(Q),$
\begin{equation}\label{eq:seed-mixed}
\frac1{|P|}\int_P |u(x)-u_P|\,dx
\leq
C(1-\delta)^{\frac{1}{q}}\ell(P)^\delta
\left\{
\frac1{|P|}\int_P \left[\mathcal{D}^{\delta}_{q,P}u(x)\right]^p\,dx
\right\}^{\frac{1}{p}}.
\end{equation}
Indeed, if $q\in [1,p],$
by \cite[(1.8)]{hmpv23} and H\"older's  inequality, we have
\begin{align*}
\frac1{|P|}\int_P |u(x)-u_P|\,dx
&\lesssim
(1-\delta)^{\frac{1}{q}}\ell(P)^\delta
\left\{
\frac1{|P|}\int_P \left[\mathcal{D}^{\delta}_{q,P}u(x)\right]^q\,dx
\right\}^{\frac{1}{q}}\\
&\leq
(1-\delta)^{\frac{1}{q}}\ell(P)^\delta
\left\{
\frac1{|P|}\int_P \left[\mathcal{D}^{\delta}_{q,P}u(x)\right]^p\,dx
\right\}^{\frac{1}{p}}.
\end{align*}
Thus, \eqref{eq:seed-mixed} holds in this case. Let $q\in (p,\infty)$ and $\sigma:=2\delta-1\in (0,1).$
Using \cite[(1.8)]{hmpv23}, we find that
\begin{align}\label{zsydsb}
\frac1{|P|}\int_P |u(x)-u_P|\,dx
\lesssim
(1-\sigma)^{\frac{1}{q}}\ell(P)^\sigma
\left\{
\frac1{|P|}\int_P \left[\mathcal{D}^{\sigma}_{q,P}u(x)\right]^p\,dx
\right\}^{\frac{1}{p}}.
\end{align}
On the other hand, we have, for any $x\in P,$
\begin{align*}
\left[\mathcal{D}^\sigma_{q,P}u(x)\right]^q
=\int_{P}\frac{|u(x)-u(y)|^q |x-y|^{q(\delta-\sigma)}}{|x-y|^{N+\delta q}}\,dy
\lesssim
\ell(P)^{q(1-\delta)}\left[\mathcal{D}^\delta_{q,P}u(x)\right]^q.
\end{align*}
From this and \eqref{zsydsb}, it   follows  that \eqref{eq:seed-mixed} holds in this case.

Observe that, for any $P\in\mathcal{D}(Q)$ and $x\in P,$
\begin{align*}
\frac{w(P)}{|P|}
\leq
\mathcal{M}(w\mathbf{1}_{Q})(x)
\end{align*}
By this and \eqref{eq:seed-mixed}, we conclude that, for any $P\in\mathcal{D}(Q),$
\begin{align*}
\frac1{|P|}\int_P |u-u_P|
\leq
C(1-\delta)^{\frac{1}{q}}\ell(P)^\delta
\left[
\frac1{w(P)}\int_P \left[\mathcal{D}^{\delta}_{q,P}u(x)\right]^p \mathcal{M}(w\mathbf{1}_Q)(x)\,dx
\right]^{\frac{1}{p}}
=:a(P).
\end{align*}
Consequently,
using this, we obtain,
for any
family of disjoint dyadic subcubes $\{P_i\}_{i\in I}\subset \mathcal{D}(Q)$,
\begin{align*}
\sum_{i\in I} a(P_i)^p\frac{w(P_i)}{w(Q)}
&=
\frac{C^p(1-\delta)^{\frac{p}{q}}}{w(Q)}
\sum_{i\in I} \ell(P_i)^{\delta p}
\int_{P_i} \left[\mathcal D^{\delta}_{q,P_i}u(x)\right]^p \mathcal{M}(w\mathbf{1}_Q)(x)\,dx \\
&\leq
\frac{C^p(1-\delta)^{\frac{p}{q}}}{w(Q)}
\sup_{i\in I} \ell(P_i)^{\delta p}
\int_{Q} \left[\mathcal D_{q,Q}^\delta u(x)\right]^p \mathcal{M}(w\mathbf{1}_Q)(x)\,dx\\
&\leq
\left(
\frac{\left|\bigcup_{i\in I} P_i\right|}{|Q|}
\right)^{\frac{\delta p}{N}}
a(Q)^p.
\end{align*}
This shows that $a\in SD_p^{\frac{N}{\delta}}(w;Q)$ and  $a_{SD_p^{\frac{N}{\delta}}(w;Q)}\leq 1.$
Applying this and
Proposition \ref{self}, we obtain the desired conclusion. This finishes the proof of Lemma \ref{poincare1}.
\end{proof}
\begin{remark}
We point out that, in the case $p=q\in [1,\infty)$,
Lemma \ref{poincare1} coincides with \cite[Theorem 2.9(b)]{hmpv23}.
\end{remark}
\begin{lemma}\label{poincare2}
Let $p,q\in [1,\infty)$ and  $\delta\in(\frac{1}{2},1)$.
Let $Q'\subset\mathbb R^{N-1}$ be an axis-parallel cube of edge length $\ell$, and let
$H:=Q'\times(0,\frac{\ell}{2})$.  Then there exists a  positive constant
$C=C_{N,p,q}$ such that,
for any $u,w\in L^1_{\mathrm{loc}}$ with $w\geq 0,$
\begin{align}\label{xiong0}
&\left[\int_H |u(x)-u_H|^pw(x)\,dx\right]^{\frac{1}{p}}\notag\\
&\quad\leq
C(1-\delta)^{\frac{1}{q}}\ell^\delta
\left\{
\int_H
\left[
\int_H \frac{|u(x)-u(y)|^q}{|x-y|^{N+\delta q}}\,dy
\right]^{\frac{p}{q}}
\mathcal{M}(w\mathbf{1}_H)(x)\,dx
\right\}^{\frac{1}{p}}.
\end{align}
\end{lemma}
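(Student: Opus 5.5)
The plan is to reduce Lemma \ref{poincare2} to Lemma \ref{poincare1} by splitting the half-cube $H$ into the two axis-parallel cubes of edge length $\ell/2$ that tile it, and then gluing the two cube estimates.

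\textbf{Step 1: Splitting $H$.} Write $Q'=\prod_{i=1}^{N-1}(a_i,a_i+\ell)$. I would cut $H$ along the hyperplane $x_1=a_1+\frac{\ell}{2}$. This gives two axis-parallel cubes $Q_1,Q_2$, each of edge length $\frac{\ell}{2}$. They share a face, satisfy $|Q_1|=|Q_2|=\frac12|H|$, and $H=Q_1\cup Q_2$ up to a null set.

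\textbf{Step 2: Estimate on each cube.} For $i\in\{1,2\}$, apply Lemma \ref{poincare1} on $Q_i$ with the weight $w\mathbf 1_{Q_i}$. This gives
\[
\left[\int_{Q_i}|u-u_{Q_i}|^pw\right]^{\frac1p}
\lesssim(1-\delta)^{\frac1q}\ell^\delta
\left\{\int_{Q_i}\left[\mathcal D^\delta_{q,Q_i}u\right]^p\mathcal M(w\mathbf 1_{Q_i})\right\}^{\frac1p}.
\]
Since $\mathcal D^\delta_{q,Q_i}u\le\mathcal D^\delta_{q,H}u$ and $\mathcal M(w\mathbf 1_{Q_i})\le\mathcal M(w\mathbf 1_H)$, the right-hand side is bounded by the right-hand side of \eqref{xiong0}.

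\textbf{Step 3: Controlling the averages.} By the triangle inequality,
\[
\|u-u_H\|_{L^p_w(H)}\le\sum_i\|u-u_{Q_i}\|_{L^p_w(Q_i)}+\sum_i|u_{Q_i}-u_H|\,w(Q_i)^{\frac1p}.
\]
Since $u_H=\frac12(u_{Q_1}+u_{Q_2})$, we get $|u_{Q_i}-u_H|\le|u_{Q_1}-u_{Q_2}|$. It therefore suffices to bound $|u_{Q_1}-u_{Q_2}|\,w(H)^{1/p}$, which is the main obstacle. I would write
\[
|u_{Q_1}-u_{Q_2}|\le\frac{1}{|Q_1||Q_2|}\int_{Q_1}\int_{Q_2}|u(x)-u(y)|\,dy\,dx.
\]
Then, for $x\in Q_1$, apply Hölder's inequality in $y$, using $|x-y|\le\sqrt N\ell$:
\[
\frac1{|Q_2|}\int_{Q_2}|u(x)-u(y)|\,dy\lesssim\ell^{\delta}\,\mathcal D^\delta_{q,H}u(x).
\]

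\textbf{Step 4: Recovering the factor $(1-\delta)^{1/q}$.} The crude bound in Step 3 loses this factor, which blows up as $\delta\to1$. To avoid the loss I would instead apply the cube estimate \eqref{eq:seed-mixed} on a third, intermediate axis-parallel cube $Q_3\subset H$ of edge length $\frac{\ell}{2}$, centered on the common face so that $|Q_3\cap Q_i|=\frac12|Q_i|$. Then
\[
|u_{Q_1}-u_{Q_3}|\le\frac{1}{|Q_1\cap Q_3|}\int_{Q_1\cap Q_3}\bigl(|u-u_{Q_1}|+|u-u_{Q_3}|\bigr)
\lesssim\frac1{|Q_1|}\int_{Q_1}|u-u_{Q_1}|+\frac1{|Q_3|}\int_{Q_3}|u-u_{Q_3}|.
\]
Each of these terms is bounded by \eqref{eq:seed-mixed}, which carries the factor $(1-\delta)^{1/q}\ell^\delta$, times $\bigl(\frac1{|P|}\int_P[\mathcal D^\delta_{q,H}u]^p\bigr)^{1/p}$. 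The same bound holds for $|u_{Q_2}-u_{Q_3}|$.

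\textbf{Step 5: Converting to the weighted form.} Finally, multiply by $w(H)^{1/p}$ and use $\frac{w(H)}{|H|}\le\mathcal M(w\mathbf 1_H)(x)$ for every $x\in H$. This gives $w(H)\,\frac1{|P|}\int_P F\lesssim\int_H F\,\mathcal M(w\mathbf 1_H)$ for $P\in\{Q_1,Q_2,Q_3\}$. Combined with Step 2, this yields \eqref{xiong0} with a constant depending only on $N,p,q$.
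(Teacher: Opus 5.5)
\textbf{Step 1 fails for $N\ge 3$, and the rest of your argument depends on it.} The set $H=Q'\times(0,\frac{\ell}{2})$ has $N-1$ edges of length $\ell$ and one of length $\frac{\ell}{2}$. Cutting along $x_1=a_1+\frac{\ell}{2}$ therefore gives two boxes of size $\frac{\ell}{2}\times\ell\times\cdots\times\ell\times\frac{\ell}{2}$, and these are cubes only when $N=2$. The volumes confirm this: $\frac12|H|=\frac14\ell^N$, while a cube of edge $\frac{\ell}{2}$ has volume $2^{-N}\ell^N$. So for $N\ge3$ Lemma \ref{poincare1} does not apply to your $Q_1,Q_2$, and neither the identity $u_H=\frac12(u_{Q_1}+u_{Q_2})$ nor the single bridging cube $Q_3$ fits the actual geometry.

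The repair is routine. Tile $H$, up to a null set, by the $2^{N-1}$ cubes $Q_\kappa$, $\kappa\in\{0,1\}^{N-1}$, of edge $\frac{\ell}{2}$, obtained by halving each of the first $N-1$ coordinate intervals. Then $u_H=2^{1-N}\sum_\kappa u_{Q_\kappa}$, so $|u_{Q_\kappa}-u_H|\le\max_{\kappa'}|u_{Q_\kappa}-u_{Q_{\kappa'}}|$. For neighbours $\kappa,\kappa'$ with $\kappa_j=0$ and $\kappa'_j=1$, the translate $Q_\kappa+\frac{\ell}{4}e_j$ is an axis-parallel cube contained in $H$ that meets each of $Q_\kappa,Q_{\kappa'}$ in half its volume. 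Your Step 4 then bounds $|u_{Q_\kappa}-u_{Q_{\kappa'}}|$ without losing $(1-\delta)^{1/q}$. Any two $Q_\kappa$ are joined by a chain of at most $N-1$ neighbours, so Steps 2, 3 and 5 go through with constants depending only on $N,p,q$. For the bound on the bridging cubes, you can also use the statement of Lemma \ref{poincare1} with $w\equiv1$ plus H\"older's inequality, instead of quoting \eqref{eq:seed-mixed} from inside that proof.

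Once repaired, your proof is correct but takes a different route from the paper. The paper does not tile $H$ at all. It reflects evenly across the bottom face $\{x_N=0\}$ to get the single cube $Q=Q'\times(-\frac{\ell}{2},\frac{\ell}{2})$ of edge $\ell$, and extends $u$ and $w$ evenly, so that $(\widetilde u)_Q=u_H$ and the weighted oscillation on $Q$ is exactly twice that on $H$. It applies Lemma \ref{poincare1} once on $Q$ and then returns to $H$ using two facts:
\begin{itemize}
\item for $x\in H$ and $y\in H^-$ one has $|x-y|\ge|x-\sigma y|$, so $\mathcal D^\delta_{q,Q}\widetilde u\lesssim\mathcal D^\delta_{q,H}u$ on $H$;
\item the maximal function of the reflected weight is controlled by $\mathcal M(w\mathbf 1_H)$ at the reflected point.
\end{itemize}
Because nothing is glued, the loss of $(1-\delta)^{1/q}$ when comparing averages, which you handle with bridging cubes, never arises. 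The price is checking how the nonlocal energy and the maximal function behave under reflection, and that check relies on the flat face of the half-cube. Your chaining argument uses only Lemma \ref{poincare1} on subcubes. It extends to any region covered by a finite chain of overlapping comparable cubes inside it, but the tiling combinatorics must be done correctly.
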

\begin{proof}
Let $H^{-}:=Q'\times(-\frac{\ell}{2},0)$, $\Pi:=Q'\times\{0\},$
and $Q:=H\cup H^{-}\cup \Pi.$
 For any $(x',x_N)\in Q,$ let
$\sigma(x',x_N):=(x',-x_N).$
For any given $u,w\in L^1_{\mathrm{loc}}$ with $w\geq 0,$ define
\[
\widetilde{u}(x):=
\begin{cases}
u(x),&x\in H,\\
u(\sigma x),&x\in H^-\\
0,&x\in \Pi,
\end{cases}
\ \ \text{and}\ \
\widetilde{w}(x):=
\begin{cases}
w(x),&x\in H,\\
w(\sigma x),&x\in H^-,\\
0,&x\in \mathbb R^N\setminus(H\cup H^-).
\end{cases}
\]
It is  obvious that
$(\widetilde{u})_Q=u_H$
and
\[
\int_Q |\widetilde{u}(x)-(\widetilde{u})_Q|^p\widetilde{w}(x)\,dx
=
2\int_H |u(x)-u_H|^pw(x)\,dx.
\]
Using this and
applying Lemma \ref{poincare1}, we obtain
\begin{equation}\label{xiong}
\left[\int_H |u(x)-u_H|^pw(x)\,dx\right]^{\frac{1}{p}}
\lesssim
(1-\delta)^{\frac{1}{q}}\ell^\delta
\left\{
\int_Q \left[\mathcal D_{q,Q}^{\delta}\widetilde{u}(x)\right]^p \mathcal{M}(\widetilde{w}\mathbf{1}_Q)(x)\,dx
\right\}^{\frac{1}{p}}.
\end{equation}
It is easy to verify that
\begin{align*}
\int_Q \left[\mathcal D_{q,Q}^{\delta}\widetilde{u}(x)\right]^p \mathcal{M}(\widetilde{w}\mathbf{1}_Q)(x)\,dx
\lesssim
\int_H \left[\mathcal D_{q,H}^{\delta}u(x)\right]^p \mathcal{M}(w\mathbf{1}_H)(x)\,dx.
\end{align*}
This, combined with \eqref{xiong}, implies that \eqref{xiong0} holds. This finishes the proof of Lemma \ref{poincare2}.
\end{proof}
\begin{lemma}\label{poincare3}
Let  the  notation be the same as in Lemma \ref{poincare2}.
Let  $\gamma:Q'\to\mathbb R$ be a Lipschitz  function with Lipschitz constant $L$
and $T:\ \mathbb{R}^{N}\to\mathbb{R}^{N}$ be a rigid motion.
For any $(x',t)\in H$, let
\begin{align*}
\Phi(x',t):=(x',t+\gamma(x'))\ \ \text{and}\ \  U:=T(\Phi(H)).
\end{align*}
Then there exists a  positive constant
$C=C_{N,p,q,L}$ such that,
for any $u,w\in L^1_{\mathrm{loc}}$ with $w\geq 0,$
\begin{align*}
&\left[\int_U |u(x)-u_U|^pw(x)\,dx\right]^{\frac{1}{p}}\notag\\
&\quad\leq
C(1-\delta)^{\frac{1}{q}}\ell^\delta
\left\{
\int_U
\left[
\int_U \frac{|u(x)-u(y)|^q}{|x-y|^{N+\delta q}}\,dy
\right]^{\frac{p}{q}}
\mathcal{M}(w\mathbf{1}_U)(x)\,dx
\right\}^{\frac{1}{p}}.
\end{align*}
\end{lemma}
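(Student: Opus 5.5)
We reduce the inequality to Lemma \ref{poincare2} by transporting everything through the bi-Lipschitz map $F:=T\circ\Phi:H\to U$. First, since $T$ is a rigid motion, it preserves Lebesgue measure and distances. It also commutes with averaging, and it maps balls to balls, so $\mathcal M(g\circ T^{-1})=(\mathcal M g)\circ T^{-1}$. Hence we may assume $T=\mathrm{id}$ and $U=\Phi(H)$. The map $\Phi(x',t)=(x',t+\gamma(x'))$ has Jacobian determinant $1$, so it preserves measure. It is bi-Lipschitz: for any $X=(x',t)$ and $Y=(y',s)$ in $H$,
\begin{align*}
(1+L)^{-1}|X-Y|\le |\Phi(X)-\Phi(Y)|\le (1+L)|X-Y|.
\end{align*}

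Set $v:=u\circ\Phi$ and $\omega:=w\circ\Phi$ on $H$. Because $\Phi$ preserves measure, $v_H=u_U$, and the change of variables gives
\begin{align*}
\int_U|u-u_U|^pw\,dx=\int_H|v-v_H|^p\omega\,dX .
\end{align*}
Lemma \ref{poincare2} applied to $v$ and $\omega$ then bounds this by
\begin{align*}
C^p(1-\delta)^{\frac pq}\ell^{\delta p}\int_H[\mathcal D^\delta_{q,H}v(X)]^p\,\mathcal M(\omega\mathbf 1_H)(X)\,dX .
\end{align*}
We transform the two factors of the integrand separately. For the fractional term, change variables $y=\Phi(Y)$ and use $|X-Y|\ge (1+L)^{-1}|\Phi(X)-\Phi(Y)|$. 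This gives
\begin{align*}
\mathcal D^\delta_{q,H}v(X)\le (1+L)^{\frac Nq+\delta}\,\mathcal D^\delta_{q,U}u(\Phi(X)),
\end{align*}
and the constant is uniform in $\delta\in(\frac12,1)$.

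The step that needs care, and the main obstacle, is the maximal function. We want
\begin{align*}
\mathcal M(\omega\mathbf 1_H)(X)\lesssim_{N,L}\mathcal M(w\mathbf 1_U)(\Phi(X)).
\end{align*}
To get this, take a ball $B\ni X$ of radius $r$. Its image $\Phi(B)$ is contained in the ball $B(\Phi(X),2(1+L)r)$, whose measure is comparable to $|B|$ with constants depending only on $N$ and $L$. Since $\Phi$ preserves measure,
\begin{align*}
\frac1{|B|}\int_{B\cap H}\omega=\frac1{|B|}\int_{\Phi(B\cap H)}w\le \frac{C_{N,L}}{|B(\Phi(X),2(1+L)r)|}\int_{B(\Phi(X),2(1+L)r)}w\mathbf 1_U .
\end{align*}
Taking the supremum over all such $B$ gives the claim. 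Reinserting the rigid motion uses its invariance in the same way for both factors.

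Finally, substitute both pointwise bounds into the integral and change variables back from $X$ to $x=\Phi(X)$, again using that $\Phi$ preserves measure. This yields the stated inequality with $C$ depending only on $N$, $p$, $q$ and $L$.
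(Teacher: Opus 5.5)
Your proposal is correct and follows essentially the same route as the paper. Both arguments transport via the measure-preserving $(1+L)$-bi-Lipschitz map, apply Lemma~\ref{poincare2} to the pulled-back $u$ and $w$, compare the fractional term pointwise, and control the maximal function through the inclusion $\Phi(B\cap H)\subset B(\Phi(X),2(1+L)r)$. The only difference is cosmetic: you first remove the rigid motion by invariance, whereas the paper works directly with $\Psi=T\circ\Phi$.
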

\begin{proof}
Let $\Psi:=T\circ \Phi.$
Then $U=\Psi(H)$.
Since $T$ is a rigid motion,
we infer that there exist an orthogonal matrix $A$ and
$b\in\mathbb R^N$ such that $Tz=Az+b$ for any  $z\in\mathbb{R}^{N}$. Moreover, by Rademacher's theorem,
$\gamma$ is differentiable almost everywhere.
Using this, we conclude that, for almost every $(x',t)\in H,$
\begin{align*}
D\Phi(x',t)
=
\begin{pmatrix}
I_{N-1} & 0\\
\nabla\gamma(x') & 1
\end{pmatrix}
\end{align*}
and hence $\det D\Phi(x',t)=1$,
where $D\Phi$ denotes the Jacobian matrix of $\Phi.$
This implies that, for almost every $x\in H,$  $D\Psi(x)=A D\Phi(x)$ and hence
\begin{align}\label{hiker}
|\det D\Psi(x)|=|\det A|\,|\det D\Phi(x)|=1.
\end{align}
From a straightforward computation, we  deduce that, for any $x,y\in H,$
\begin{align*}
(1+L)^{-1}|x-y|
\leq
|\Phi(x)-\Phi(y)|
\leq
(1+L)|x-y|
\end{align*}
and
\begin{align}\label{youshigan}
(1+L)^{-1}|x-y|
\leq
|\Psi(x)-\Psi(y)|=|\Phi(x)-\Phi(y)|
\leq
(1+L)|x-y|.
\end{align}
For any $x\in \mathbb R^N$, let
\begin{align*}
v(x):=u(\Psi(x))\mathbf{1}_H(x)
\ \ \text{and}\ \
\widetilde w(x):=w(\Psi(x))\mathbf{1}_H(x).
\end{align*}
By the change-of-variables formula and \eqref{hiker}, we find that  $|U|=|H|$,
\begin{align*}
u_U
&=
\frac{1}{|U|}\int_U u(x)\,dx
=
\frac{1}{|H|}\int_H u(\Psi(x))\,dx
=
v_H,
\end{align*}
and
\begin{align*}
\int_U |u(x)-u_U|^p w(x)\,dx
=
\int_H |v(x)-v_H|^p\widetilde w(x)\,dx .
\end{align*}
Using this and applying Lemma \ref{poincare2} to $(H,v,\widetilde w)$, we conclude that
\begin{align}\label{meiqian23}
\int_U |u(x)-u_U|^p w(x)\,dx
\lesssim
(1-\delta)^{\frac{p}{q}}\ell^{\delta p}
\int_H
\left[\mathcal D^\delta_{q,H}v(x)\right]^p
\mathcal M(\widetilde w\mathbf{1}_H)(x)\,dx .
\end{align}
From \eqref{youshigan}, we infer that, for any $x\in H$,
\begin{align}\label{meiqian12}
\left[\mathcal D^\delta_{q,H}v(x)\right]^q
&=
\int_H
\frac{|u(\Psi(x))-u(\Psi(y))|^q}{|x-y|^{N+\delta q}}\,dy\notag\\
&\sim
\int_H
\frac{|u(\Psi(x))-u(\Psi(y))|^q}{|\Psi(x)-\Psi(y)|^{N+\delta q}}\,dy
\notag\\
&=\int_U
\frac{|u(\Psi(x))-u(z)|^q}{|\Psi(x)-z|^{N+\delta q}}\,dz
=
\left[\mathcal D^\delta_{q,U}u(\Psi(x))\right]^q.
\end{align}
Let $x\in H$ and  $B$ be a ball
in $\mathbb R^N$ such that $x\in B$. Denote by $r_B$ the radius of $B$.
By \eqref{youshigan},
for any $y\in B\cap H$,
\begin{align*}
|\Psi(y)-\Psi(x)|
\leq
(1+L)|y-x|
\leq
2(1+L)r_B.
\end{align*}
Thus,
\begin{align*}
\Psi(B\cap H)\subset B(\Psi(x),2(1+L)r_B).
\end{align*}
From this and  the change-of-variables formula, we deduce that
\begin{align*}
\frac{1}{|B|}\int_B \widetilde w(y)\mathbf{1}_H(y)\,dy
&=
\frac{1}{|B|}\int_{B\cap H} w(\Psi(y))\,dy=
\frac{1}{|B|}\int_{\Psi(B\cap H)} w(z)\mathbf{1}_U(z)\,dz
\\
&\lesssim
\frac{1}{|B|}
\int_{B(\Psi(x),2(1+L)r_B)} w(z)\mathbf{1}_U(z)\,dz
\lesssim
\mathcal{M}(w\mathbf{1}_U)(\Psi(x)).
\end{align*}
Taking the supremum over all balls $B$ containing $x$, we conclude that, for any $x\in H$,
\begin{align*}
\mathcal M(\widetilde w\mathbf{1}_H)(x)
\lesssim
\mathcal M(w\mathbf{1}_U)(\Psi(x)).
\end{align*}
This,  together with \eqref{meiqian12} and \eqref{meiqian23}, implies that
\begin{align*}
\int_U |u(x)-u_U|^p w(x)\,dx
&\lesssim
(1-\delta)^{\frac{p}{q}}\ell^{\delta p}
\int_H
\left[\mathcal D^\delta_{q,U}u(\Psi(x))\right]^p
\mathcal M(w\mathbf{1}_U)(\Psi(x))\,dx\\
&=
(1-\delta)^{\frac{p}{q}}\ell^{\delta p}
\int_U
\left[\mathcal D^\delta_{q,U}u(x)\right]^p
\mathcal M(w\mathbf{1}_U)(x)\,dx.
\end{align*}
This finishes the  proof of Lemma \ref{poincare3}.
\end{proof}
\subsection{Proof of Theorem \ref{thm-domain}}\label{sec2.4}
In this subsection, we prove Theorem \ref{thm-domain}.
We denote by $C^\infty_{\mathrm{c}}$ the space of all infinitely
differentiable functions on $\mathbb{R}^{N}$ with compact support.
We need the following lemma.
\begin{lemma}\label{lem:dir}
Let $e\in S^{N-1}$ and  $\varphi\in C_{\mathrm{c}}^\infty$.
For any $n\in\mathbb N$ and $x\in\mathbb{R}^N$, let
\begin{align*}
T_{n,e}\varphi(x):=\frac{2\sigma(S^{N-1})}{K(1,N)}
\int_{\{h\in\mathbb R^N:\ h\cdot e\ge0\}}
\frac{\varphi(x+h)-\varphi(x)}{|h|}\,\rho_n(|h|)\,dh,
\end{align*}
where $\rho_n$ is as in \eqref{tesuhe} and $K(1,N)$
as in \eqref{kchangshu}.
Then $T_{n,e}\varphi$  uniformly converges to  $e\cdot\nabla\varphi$
as $n\to\infty$.
\end{lemma}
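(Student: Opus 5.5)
We prove Lemma \ref{lem:dir} by Taylor expansion of $\varphi$, showing that the first-order term reproduces $e\cdot\nabla\varphi$ exactly and all remainders vanish uniformly.

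\textbf{Step 1 (splitting).} Fix $x\in\mathbb R^N$ and write, for any $h$,
\begin{align*}
\varphi(x+h)-\varphi(x)=\nabla\varphi(x)\cdot h+R(x,h),
\qquad |R(x,h)|\le \|\nabla^2\varphi\|_{L^\infty}|h|^2 .
\end{align*}
Since $\rho_n$ in \eqref{tesuhe} is supported in $\{|h|<R\}$, the integral splits into a linear main term and a remainder term.

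\textbf{Step 2 (main term).} Use polar coordinates $h=r\omega$ and the normalization $\int_{\mathbb R^N}\rho_n(|h|)\,dh=1$ from \eqref{wuhu}. The main term becomes
\begin{align*}
\frac{2\sigma(S^{N-1})}{K(1,N)}\int_{\{\omega\cdot e\ge0\}}\nabla\varphi(x)\cdot\omega\,d\sigma(\omega)\cdot\frac{1}{\sigma(S^{N-1})}
=\frac{2}{K(1,N)}\int_{\{\omega\cdot e\ge0\}}\nabla\varphi(x)\cdot\omega\,d\sigma(\omega).
\end{align*}
By symmetry, only the component of $\omega$ along $e$ survives on the hemisphere: reflecting in the hyperplane $e^\perp$ preserves the hemisphere and kills the orthogonal components. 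Hence
\begin{align*}
\int_{\{\omega\cdot e\ge0\}}\omega\,d\sigma(\omega)=e\int_{\{\omega\cdot e\ge 0\}}\omega\cdot e\,d\sigma(\omega)=\frac{K(1,N)}{2}\,e,
\end{align*}
since $K(1,N)=\int_{S^{N-1}}|\omega\cdot e|\,d\sigma(\omega)$ splits equally over the two hemispheres. So the main term equals $e\cdot\nabla\varphi(x)$ exactly, for every $n$ and every $x$.

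\textbf{Step 3 (remainder).} The remainder is bounded by
\begin{align*}
C\|\nabla^2\varphi\|_{L^\infty}\int_{\mathbb R^N}|h|\rho_n(|h|)\,dh .
\end{align*}
Split at a fixed $\delta\in(0,R)$. On $\{|h|\le\delta\}$ the integral is at most $\delta$. On $\{|h|>\delta\}$ it is at most $R\int_{|h|>\delta}\rho_n$, which tends to $0$ by \eqref{wuhu}. Alternatively, compute directly from \eqref{tesuhe}:
\begin{align*}
\int|h|\rho_n(|h|)\,dh=\frac{q(1-s_n)R}{q(1-s_n)+1}\to0 .
\end{align*}
This bound does not depend on $x$, which gives uniform convergence.

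\textbf{Main obstacle.} There is no real difficulty here. The only delicate point is the constant bookkeeping in Step 2: the factor $2\sigma(S^{N-1})/K(1,N)$ must be matched against the normalization of $\rho_n$ so that the main term is exactly $e\cdot\nabla\varphi(x)$. Here $\rho_n$ is radial with total mass $1$, so its radial density integrates to $1/\sigma(S^{N-1})$ per unit of surface measure. Note also that a boundedness constant for $\nabla^2\varphi$ exists because $\varphi\in C^\infty_{\mathrm c}$.
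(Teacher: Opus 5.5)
Your proposal is correct and follows the paper's proof essentially step for step. The shared steps are the Taylor expansion, the main term evaluated in polar coordinates using $\int_0^\infty\rho_n(r)r^{N-1}\,dr=1/\sigma(S^{N-1})$ together with the vanishing of the hemisphere integrals of the components orthogonal to $e$ (the paper writes $\nabla\varphi(x)$ in an orthonormal basis with $e_1=e$), and the remainder bounded uniformly in $x$ by $\int|h|\rho_n(|h|)\,dh=\frac{q(1-s_n)R}{q(1-s_n)+1}\to0$. One wording fix: the symmetry that preserves the hemisphere $\{\omega\cdot e\ge0\}$ and kills the orthogonal components is $\omega\mapsto 2(\omega\cdot e)e-\omega$, or equivalently reflections in hyperplanes containing $e$. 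Reflection across $e^\perp$ does not work, since it swaps the two hemispheres.
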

\begin{proof}
By Taylor's expansion, we   find that, for any $x,h\in\mathbb{R}^{N},$
\[
\varphi(x+h)-\varphi(x)
=
\nabla\varphi(x)\cdot h + R_x(h),
\]
where
\begin{align*}
|R_x(h)|
\lesssim|h|^2.
\end{align*}
Then, for any $x\in\mathbb{R}^{N},$
\begin{align}\label{shenke1}
\frac{K(1,N)}{2\sigma(S^{N-1})}T_{n,e}\varphi(x)
&=
\int_{\{h\in\mathbb R^N:\ h\cdot e\ge0\}}
\frac{\nabla\varphi(x)\cdot h}{|h|}\rho_n(|h|)\,dh\notag\\
&\quad+
\int_{\{h\in\mathbb R^N:\ h\cdot e\ge0\}}
\frac{R_x(h)}{|h|}\rho_n(|h|)\,dh
\notag\\
&=:I_n(x)+J_n(x).
\end{align}
Choose an orthonormal basis
$\{e_1,\dots,e_N\}$ with $e_1=e.$
From this, it follows that, for any $x\in\mathbb{R}^{N},$
\begin{align*}
\nabla\varphi(x)=\sum_{j=1}^N a_j(x)e_j
\end{align*}
and
\begin{align*}
I_n(x)
&=\sum_{j=1}^N a_j(x)
\int_{\{h\cdot e_1\ge0\}}
\frac{h\cdot e_j}{|h|}\rho_n(|h|)\,dh\\
&=\sum_{j=1}^N a_j(x)\left(\int_0^\infty \rho_n(r)r^{N-1}\,dr\right)
\int_{\{\omega\cdot e_1\ge0\}} \omega\cdot e_j\,d\sigma(\omega)\\
&=\frac{a_1(x)}{\sigma(S^{N-1})}\int_{\{\omega\cdot e_1\ge0\}} \omega\cdot e_1\,d\sigma(\omega)=\frac{K(1,N)}{2\sigma(S^{N-1})}\,e\cdot\nabla\varphi(x).
\end{align*}
Moreover, for any $x\in\mathbb{R}^{N},$
\begin{align*}
\sup_{x\in\mathbb R^N}|J_n(x)|
\lesssim
\int_{\mathbb R^N}|h|\,\rho_n(|h|)\,dh
=
\frac{q(1-s_n)}{R^{q(1-s_n)}}\int_{0}^{R}r^{q(1-s_n)}\,dr\to0
\end{align*}
as $n\to\infty.$
This, together with \eqref{shenke1}, implies
the desired conclusion,
which completes the proof of Lemma \ref{lem:dir}.
\end{proof}
Now, we prove Theorem \ref{thm-domain}.
\begin{proof}[Proof of Theorem \ref{thm-domain}]
Let $m\in\mathbb{N}.$
By Lemma \ref{lem:decom}, we  find that there exists
a family of measurable sets $\{U_{m,\alpha}\}_{\alpha=1}^{J_m}$ having the  properties
in Lemma \ref{lem:decom}.
For any $n\in\mathbb{N}$, let
\[
E_m f_n:=\sum_{\alpha=1}^{J_m} (f_n)_{U_{m,\alpha}}\,\mathbf 1_{E_{m,\alpha}}.
\]
Then
\begin{align*}
\sup_{n\in\mathbb{N}}|(f_n)_{U_{m,\alpha}}|
\leq
\sup_{n\in\mathbb{N}}\frac{1}{|U_{m,\alpha}|}\int_{\Omega}|f_n(x)|\,dx
\leq
\frac{2}{\ell_m^N}\|\mathbf{1}_{\Omega}\|_{X'}\sup_{n\in\mathbb{N}}\|f_n\|_{X(\Omega)}
<\infty.
\end{align*}
By this, we have
\begin{align*}
\sup_{n\in\mathbb{N}}\|E_mf_n\|_{X(\Omega)}
\leq
\frac{2}{\ell_m^N}\|\mathbf{1}_{\Omega}\|_{X'}\sup_{n\in\mathbb{N}}\|f_n\|_{X(\Omega)}\sum_{\alpha=1}^{J_m}\|\mathbf{1}_{E_{m,\alpha}}\|_{X(\Omega)}
<\infty.
\end{align*}
This implies that $\{E_mf_n\}_{n\in\mathbb{N}}$ is relatively compact on $X(\Omega).$ Since $X(\Omega)$ is complete,  we infer that $\{E_mf_n\}_{n\in\mathbb{N}}$ is totally bounded on $X(\Omega).$

Next, we show that there  exists $n_m\in\mathbb{N}$ such that, for any $n\in\mathbb{N}\cap [n_m,\infty)$,
\begin{equation}\label{biaoqing}
\|f_n-E_mf_n\|_{X(\Omega)}
\lesssim
(1-s_n)^{\frac{1}{q}}\ell_m^{\frac{1}{2}}\left\|\mathcal{D}^{s_n}_{q,\Omega}(f_n)\right\|_{X(\Omega)}.
\end{equation}
Let $Y:=X^{\frac{1}{r}}.$
Using Lemma \ref{xduiou} and Proposition \ref{norm}, we obtain, for any $n\in\mathbb{N},$
\begin{align}\label{fanpan}
\|f_n-E_mf_n\|_{X(\Omega)}^r
=
\sup_{\genfrac{}{}{0pt}{}{g\geq0}{\|g\|_{Y'}=1}}
\int_\Omega |f_n(x)-E_mf_n(x)|^r g(x)\,dx.
\end{align}
Let $g\in Y'$ with $g\geq 0$ and $\|g\|_{Y'}=1.$ By Lemma \ref{lem:decom}(ii), we have
\begin{align}\label{shenglv}
\int_\Omega |f_n(x)-E_mf_n|^rg(x)\,dx
\leq
\sum_{\alpha=1}^{J_m}
\int_{U_{m,\alpha}} |f_n(x)-(f_n)_{U_{m,\alpha}}|^rg(x)\,dx.
\end{align}
Recall that, for any measurable set $U\subset \mathbb{R}^{N}$,
$s\in (0,1)$, and $f\in L^1_{\mathrm{loc}}$,
\begin{align*}
\mathcal{D}^{s}_{q,U}f(x):=
\left[\int_U
\frac{|f(x)-f(y)|^q}{|x-y|^{N+sq}}\,dy
\right]^{\frac{1}{q}}.
\end{align*}
Since $\lim_{n\to\infty}s_n=1,$
it follows that there  exists $n_m\in\mathbb{N}$ such that, for any $n\in\mathbb{N}\cap[n_m,\infty)$,
$s_n\in (\frac{1}{2},1).$
Using this and
Lemmas \ref{poincare1} and \ref{poincare3}, we  conclude that, for any $n\in\mathbb{N}\cap[n_m,\infty)$ and $\alpha\in\{1,\ldots,J_m\}$,
\begin{align*}
\int_{U_{m,\alpha}} |f_n(x)-(f_n)_{U_{m,\alpha}}|^rg(x)\,dx
\lesssim
(1-s_n)^{\frac{r}{q}}\ell_m^{s_nr}
\int_{U_{m,\alpha}}
\left[\mathcal{D}^{s_n}_{q,U_{m,\alpha}}(f_n)(x)\right]^{r}
\mathcal{M}(g\mathbf{1}_{U_{m,\alpha}})(x)\,dx.
\end{align*}
This, together with \eqref{shenglv} and Lemma \ref{lem:decom}(iv), implies that, for any $n\in\mathbb{N}\cap[n_m,\infty)$,
\begin{align*}
\int_\Omega |f_n(x)-E_mf_n|^r g(x)\,dx
\lesssim
(1-s_n)^{\frac{r}{q}}\ell_m^{s_nr}
\int_{\Omega}
\left[\mathcal{D}^{s_n}_{q,\Omega}(f_n)(x)\right]^{r}
\mathcal{M} g(x)\,dx.
\end{align*}
Using this, \eqref{fanpan},
the assumption that $\mathcal{M}$ is bounded on $Y'$, and  Lemma \ref{xholder}, we obtain, for any $n\in\mathbb{N}\cap[n_m,\infty)$,
\begin{align*}
\|f_n-E_mf_n\|_{X(\Omega)}^r
&\lesssim
\sup_{\genfrac{}{}{0pt}{}{g\ge0}{\|g\|_{Y'}=1}}(1-s_n)^{\frac{r}{q}}\ell_m^{s_nr}
\int_{\Omega}
\left[\mathcal{D}^{s_n}_{q,\Omega}(f_n)(x)\right]^{r}
\mathcal{M} g(x)\,dx\\
&\leq
\sup_{\genfrac{}{}{0pt}{}{g\ge0}{\|g\|_{Y'}=1}}(1-s_n)^{\frac{r}{q}}\ell_m^{s_n r}\left\|\mathcal{D}^{s_n}_{q,\Omega}(f_n)\right\|_{X(\Omega)}^r\|\mathcal{M} g\|_{Y'}\\
&\lesssim
(1-s_n)^{\frac{r}{q}}\ell_m^{\frac{r}{2}}\left\|\mathcal{D}^{s_n}_{q,\Omega}(f_n)\right\|_{X(\Omega)}^r,
\end{align*}
where we used the range $s_n\in (\frac{1}{2},1)$ and $\ell_m\leq \ell_0<1$ in the last step. This proves \eqref{biaoqing}.

Let $\epsilon\in(0,\infty)$.
By \eqref{biaoqing} and \eqref{youjie2}, we conclude that there exist sufficiently large integers  $m_0$ and $n_{m_0}$
such that, for any $n\in\mathbb{N}\cap [n_{m_0},\infty),$
\begin{align}\label{jineng}
\|f_n-E_{m_0}f_n\|_{X(\Omega)}
\lesssim
(1-s_n)^{\frac{1}{q}}\ell_{m_0}^{\frac{1}{2}}\left\|\mathcal{D}^{s_n}_{q,\Omega}(f_n)\right\|_{X(\Omega)}<\epsilon.
\end{align}
Since $\{E_{m_0}f_n\}_{n\in\mathbb{N}}$ is totally bounded in $X(\Omega)$, we deduce that there exists $\{u_{j}\}_{j=1}^L\subset X(\Omega)$ such that
\begin{align*}
\{E_{m_0}f_n\}_{n\in\mathbb{N}}
\subset
\bigcup_{j=1}^LB(u_j,\epsilon).
\end{align*}
By this and \eqref{jineng}, we find that
\begin{align*}
\{f_n\}_{n\in\mathbb{N}\cap [n_{m_0},\infty)}
\subset
\bigcup_{j=1}^LB(u_j,2\epsilon).
\end{align*}
This implies that $\{f_n\}_{n\in\mathbb{N}}$ is totally bounded in $X(\Omega).$
Since $X(\Omega)$ is  complete,
it follows that $\{f_n\}_{n\in\mathbb{N}}$ is relatively compact in $X(\Omega)$.

Let $\{f_{n_k}\}_{k\in\mathbb N}$ be a subsequence of $\{f_n\}_{n\in\mathbb{N}}$ such that
\begin{align}\label{chongs}
\lim_{k\to\infty}f_{n_k}=f
\qquad\text{in }X(\Omega).
\end{align}
Let $j\in\{1,\ldots,N\}$,
$\varphi\in C_c^\infty(\Omega)$,
and  $e_j$ be the $j$-th standard basis vector of $\mathbb{R}^N$.
By  \eqref{chongs} and Lemma \ref{xholder}, we find that
\begin{align}\label{linrui}
\lim_{k\to\infty}\int_\Omega f_{n_k}(x)\,\partial_j\varphi(x)\,dx=
\int_\Omega f(x)\,\partial_j\varphi(x)\,dx.
\end{align}
Let $d$ be sufficiently small such that  $\mathrm{supp}\,(\varphi)+B(\mathbf{0},d)\subset \Omega.$
For any $k\in\mathbb{N}$ and $x\in\mathbb{R}^{N},$ let
\begin{align*}
T_{n_k,e_j,d}\varphi(x)
:=
\frac{2\sigma(S^{N-1})}{K(1,N)}
\int_{\{h\in\mathbb R^N:\ h\cdot e_j\ge0,\ |h|<d\}}
\frac{\varphi(x+h)-\varphi(x)}{|h|}\,\rho_{n_k}(|h|)\,dh.
\end{align*}
By
Lemma  \ref{lem:dir}, we conclude that
$T_{n_k,e_j}\varphi$ uniformly converges to $\partial_j \varphi$
as $k\to\infty.$
Observe that
\begin{align*}
\sup_{x\in\mathbb{R}^{N}}\left|T_{n_k,e_j}\varphi(x)-T_{n_k,e_j,d}\varphi(x)\right|
\lesssim
\int_{|h|\ge d}\rho_{n_k}(|h|)\,dh\to0
\end{align*}
as $k\to\infty.$
Thus, $T_{n_k,e_j,d}\varphi$ uniformly converges to $\partial_j \varphi$
as $k\to\infty.$
This implies that
\begin{equation*}
\lim_{k\to\infty}\left\|T_{n_k,e_j,d}\varphi-\partial_j\varphi\right\|_{X'}=0,
\end{equation*}
which, combined with Lemma \ref{xholder}, further implies that
\begin{align*}
\lim_{k\to\infty}\int_\Omega f_{n_k}(x)\,\partial_j\varphi(x)\,dx=
\lim_{k\to\infty}\int_\Omega f_{n_k}(x)T_{n_k,e_j,d}\varphi(x)\,dx.
\end{align*}
From this and \eqref{linrui}, we infer that
\begin{equation}\label{guiyihua2}
\int_\Omega f(x)\,\partial_j\varphi(x)\,dx
=
\lim_{k\to\infty}
\int_\Omega f_{n_k}(x)
T_{n_k,e_j,d}\varphi(x)\,dx.
\end{equation}
By the fact that  $\mathrm{supp}\,(\varphi)+B(\mathbf{0},d)\subset \Omega,$ we have, for any $k\in\mathbb{N},$
\begin{align}\label{guiyihua}
&\left|\int_\Omega f_{n_k}(x)T_{n_k,e_j,d}\varphi(x)\,dx\right|\notag\\
&\quad=
\frac{2\sigma(S^{N-1})}{K(1,N)}
\left|\int_{\Omega}\varphi(x)
\int_{\{h\in\mathbb{R}^{N}: h\cdot e_j\geq 0,|h|<d\}}
\frac{ f_{n_k}(x-h)- f_{n_k}(x)}{|h|}\rho_{n_k}(|h|)\,dh\,dx\right|\notag\\
&\quad\leq
\frac{2\sigma(S^{N-1})}{K(1,N)}\int_{\Omega}|\varphi(x)|\int_\Omega
\frac{|f_{n_k}(y)-f_{n_k}(x)|}{|x-y|}\rho_{n_k}(|x-y|)\,dy\,dx.
\end{align}
Using H\"older's inequality, we obtain, for any $k\in\mathbb{N}$ and $x\in \mathrm{supp}\,(\varphi),$
\begin{align*}
&\int_\Omega
\frac{|f_{n_k}(y)-f_{n_k}(x)|}{|x-y|}\rho_{n_k}(|x-y|)\,dy\\
&\quad\leq
\left[\int_{\Omega}\rho_{n_k}(|x-y|)\,dy\right]^{\frac{1}{q'}}
\left[\int_\Omega
\frac{|f_{n_k}(y)-f_{n_k}(x)|^q}{|x-y|^q}\rho_{n_k}(|x-y|)\,dy\right]^{\frac{1}{q}}\\
&\quad\leq
\left[\frac{q(1-s_{n_k})}{\sigma(S^{N-1})R^{q(1-s_{n_k})}}\int_{\Omega}
\frac{|f_{n_k}(x)-f_{n_k}(y)|^q}{|x-y|^{N+s_{n_k}q}}\,dy\right]^{\frac{1}{q}}
\end{align*}
From this, \eqref{guiyihua}, and Lemma \ref{xholder}, we deduce that,  for any $k\in\mathbb{N},$
\begin{align*}
\left|
\int_\Omega f_{n_k}(x)T_{n_k,e_j,d}\varphi(x)\,dx
\right|
&\leq
\frac{2[\sigma(S^{N-1})]^{1-\frac{1}{q}}q^{\frac{1}{q}}}{K(1,N)R^{1-s_{n_k}}}(1-s_{n_k})^{\frac{1}{q}}\int_{\Omega}|\varphi(x)|\mathcal{D}^{s_{n_k}}_{q,\Omega}(f_{n_k})(x)\,dx\\
&\leq
\frac{2[\sigma(S^{N-1})]^{1-\frac{1}{q}}q^{\frac{1}{q}}}{K(1,N)R^{1-s_{n_k}}}(1-s_{n_k})^{\frac{1}{q}}
\left\|\mathcal{D}^{s_{n_k}}_{q,\Omega}(f_{n_k})\right\|_{X(\Omega)}\|\varphi\|_{X'}.
\end{align*}
This, combined with \eqref{guiyihua2}, implies that, for any $\varphi\in C^\infty_{\mathrm{c}}(\Omega),$
\begin{equation}\label{guiyihua3}
\left|
\int_\Omega f(x)\,\partial_j\varphi(x)\,dx
\right|
\leq
\frac{2[\sigma(S^{N-1})]^{1-\frac{1}{q}}q^{\frac{1}{q}}}{K(1,N)}
\varliminf_{k\to\infty}(1-s_{n_k})^{\frac{1}{q}}\left\|\mathcal{D}^{s_{n_k}}_{q,\Omega}(f_{n_k})\right\|_{X(\Omega)}\|\varphi\|_{X'}.
\end{equation}
For any $\varphi\in C^\infty_{\mathrm{c}}(\Omega),$ define
\begin{align*}
L_j(\varphi):=-\int_\Omega f(x)\,\partial_j\varphi(x)\,dx.
\end{align*}
Then \eqref{guiyihua3} shows that $L_j$ is a bounded linear  functional on $C^\infty_{\mathrm{c}}(\Omega)\subset X'$. By the Hahn--Banach theorem, $L_j$ extends to a bounded linear functional on  $X'$,
which is denoted by $\widetilde{L_j}$.
Using Lemmas \ref{xingxing} and \ref{xduiou} and the assumption that $X'$ has an absolutely continuous norm,
we conclude that $(X')^\ast=X''=X$. Thus,
there exists $g_j\in (X')^\ast=X''=X$ such that, for any $h\in X'$,
\begin{align*}
\widetilde{L_j}(h)=\int_{\mathbb R^N} g_j(x)\,h(x)\,dx
\end{align*}
and
\begin{align*}
\|g_j\|_{X}
\leq
\frac{2[\sigma(S^{N-1})]^{1-\frac{1}{q}}q^{\frac{1}{q}}}{K(1,N)}
\varliminf_{k\to\infty}(1-s_{n_k})^{\frac{1}{q}}\left\|\mathcal{D}^{s_{n_k}}_{q,\Omega}(f_{n_k})\right\|_{X(\Omega)}.
\end{align*}
From the definition of weak derivatives, we  infer that $\partial^j f=g_j|_{\Omega}.$ This finishes the proof of Theorem \ref{thm-domain}.
\end{proof}
\subsection{Proofs of Theorems \ref{thm:frac-poincare} and \ref{thm:weighted-poincare}}\label{pro:1.2}
In this subsection, we prove Theorems \ref{thm:frac-poincare} and \ref{thm:weighted-poincare}.
Using \cite[Lemma 3.11]{dgpyyz24}
and repeating the proof of \cite[Proposition 2.8]{dlyyz23},
we obtain the following conclusion; we omit the details here.
\begin{proposition}\label{x-poincare}
Let $r\in[1,\infty)$,
$N\in\mathbb{N}\cap[2,\infty)$, and
$\Omega$ be a bounded Lipschitz domain.
Let  $X$
and $X^{\frac{1}{r}}$  be a BBF  space. Assume  that the Hardy--Littlewood maximal operator $\mathcal{M}$ is bounded on $(X^{\frac{1}{r}})'$. Then there exists a   positive constant $C_{X,\Omega}$, depending only on $X$ and $\Omega$, such that, for any $f\in W^{1,X}(\Omega)$,
\begin{align*}
\|f-f_\Omega\|_{X(\Omega)}
\leq
C_{X,\Omega}\|\,|\nabla f|\,\|_{X(\Omega)}.
\end{align*}
\end{proposition}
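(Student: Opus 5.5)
The plan is to follow the classical route for the unweighted Poincaré inequality on a bounded Lipschitz domain, carried out by duality and extrapolation-style arguments in $X$. Set $Y:=X^{\frac1r}$. By Lemma \ref{xduiou} and Proposition \ref{norm},
\begin{align*}
\|f-f_\Omega\|_{X(\Omega)}^r=\sup_{g\ge0,\ \|g\|_{Y'}=1}\int_\Omega|f(x)-f_\Omega|^r g(x)\,dx .
\end{align*}
So it suffices to prove a weighted inequality of the form
\begin{align*}
\int_\Omega|f-f_\Omega|^r g\lesssim\int_\Omega|\nabla f|^r\,\mathcal{M}^{(k)}(g\mathbf 1_\Omega)\,dx,
\end{align*}
with a bounded number $k$ of iterates of $\mathcal{M}$, and with the implicit constant depending only on $N$, $r$, and $\Omega$. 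Once this is in hand, Lemma \ref{xholder} and the boundedness of $\mathcal M$ on $Y'$ give
\begin{align*}
\int|\nabla f|^r\mathcal M^{(k)}g\le\||\nabla f|\|_{X(\Omega)}^r\|\mathcal M\|_{Y'\to Y'}^{k},
\end{align*}
which yields the claim with $C_{X,\Omega}$ depending on $X$ and $\Omega$ only.

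For the weighted inequality I would first work on the pieces from Lemma \ref{lem:decom} with a fixed $m$ (say $m=1$). On each piece I use the first-order analogue of Lemmas \ref{poincare1}--\ref{poincare3}. Concretely, on an axis-parallel cube $P$ one has
\begin{align*}
\frac1{|P|}\int_P|u-u_P|\lesssim\ell(P)\left(\frac1{|P|}\int_P|\nabla u|^r\right)^{\frac1r}.
\end{align*}
The coefficient $a(P)$ built from this satisfies the $SD_r^{N}(w;Q)$ condition with $w=g$, after replacing $w(P)/|P|$ by $\mathcal M(g\mathbf 1_Q)$ exactly as in the proof of Lemma \ref{poincare1}. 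Proposition \ref{self} then gives
\begin{align*}
\int_Q|u-u_Q|^rg\lesssim\ell(Q)^r\int_Q|\nabla u|^r\mathcal M(g\mathbf 1_Q).
\end{align*}
To transfer this to the Lipschitz pieces $U_{m,\alpha}$, I would reflect across the flat face as in Lemma \ref{poincare2}, then pull back through the bi-Lipschitz, measure-preserving map $\Psi$ as in Lemma \ref{poincare3}. The gradient transforms with a factor at most $1+L$, and the maximal function comparison is identical to the one used there.

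Next I glue the local estimates. Since $\Omega$ is connected and the cover is finite with overlaps of measure comparable to $\ell_m^N$, I would connect any two pieces by a chain $U_{\alpha_0},\dots,U_{\alpha_k}$ with $|U_{\alpha_i}\cap U_{\alpha_{i+1}}|>0$. This is the step that needs care, because the overlaps produced by the decomposition might be null. If so, I would enlarge $m$ or use the shifted families $\sigma$, $\tau$, whose mutual overlaps have positive measure. Along such a chain,
\begin{align*}
|u_{U_{\alpha_i}}-u_{U_{\alpha_{i+1}}}|\le\frac{1}{|U_{\alpha_i}\cap U_{\alpha_{i+1}}|}\left(\int_{U_{\alpha_i}}|u-u_{U_{\alpha_i}}|+\int_{U_{\alpha_{i+1}}}|u-u_{U_{\alpha_{i+1}}}|\right),
\end{align*}
and each term is bounded by $\int_\Omega|\nabla u|\le\||\nabla u|\|_{X(\Omega)}\|\mathbf 1_\Omega\|_{X'}$. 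Writing $f-f_\Omega$ as $(f-f_{U_\alpha})+(f_{U_\alpha}-f_{U_1})+(f_{U_1}-f_\Omega)$ on each $E_{m,\alpha}$, the first term is controlled by the local weighted estimate summed over $\alpha$ with bounded overlap (Lemma \ref{lem:decom}(iv)), exactly as in the proof of \eqref{biaoqing}. The remaining constants are bounded by $\||\nabla f|\|_{X(\Omega)}$ times $\|\mathbf 1_\Omega\|_{X(\Omega)}$ and $\|\mathbf 1_\Omega\|_{X'}$, using only finitely many pieces.

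The main obstacle is this chaining step: it requires a quantitative positive lower bound on the chained overlaps, which the decomposition does not state explicitly. A fallback avoiding chains is the standard compactness-contradiction argument. Suppose $f_n$ satisfy $\|f_n-(f_n)_\Omega\|_{X(\Omega)}=1$ and $\||\nabla f_n|\|_{X(\Omega)}\to0$. The local Poincaré bounds make the oscillation of $f_n$ on each piece tend to zero, so $f_n-(f_n)_\Omega$ is close to the finite-rank averages $E_m$, and a subsequence converges. The limit has zero weak gradient, hence is constant on the connected set $\Omega$; it has zero mean, so it vanishes, contradicting norm one. This fallback gives an inexplicit $C_{X,\Omega}$, which is all the statement requires.
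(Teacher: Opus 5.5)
Your proposal is correct, but it follows a different route from the paper. The paper gives no argument of its own here. It imports a ready-made estimate on the whole Lipschitz domain (a lemma of Dai, Grafakos, Pan, Yang, Yuan, and Zhang) and repeats a duality argument of Dai, Lin, Yang, Yuan, and Zhang. That duality argument is exactly your first step: write $\|f-f_\Omega\|_{X(\Omega)}^r$ as a supremum over $g\ge0$ in the unit sphere of $(X^{1/r})'$, then absorb $\mathcal M g$ via Lemma \ref{xholder} and the boundedness of $\mathcal M$.

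You differ in how the weighted estimate on $\Omega$ is produced. You build it internally from first-order versions of Lemmas \ref{poincare1}--\ref{poincare3} on the pieces of Lemma \ref{lem:decom}, and then glue. Your check that $a\in SD_r^N(w;Q)$ with constant at most $1$ is right, and the reflection and bi-Lipschitz transfers go through for gradients.

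The gluing step you call the main obstacle is in fact harmless. Every $U_{m,\alpha}$ is open: it is either an open cube or the image of an open box under the bi-Lipschitz homeomorphism $T_\alpha^{-1}\circ\Phi_\alpha$. By Lemma \ref{lem:decom}(ii) and (iii), these sets lie in $\Omega$ and cover it. Since $\Omega$ is connected, the graph joining $\alpha$ and $\beta$ whenever $U_{m,\alpha}\cap U_{m,\beta}\neq\emptyset$ is connected, and a nonempty intersection of open sets has positive measure. With $m$ fixed there are finitely many pieces, so the smallest positive overlap is a constant depending only on $\Omega$ and the fixed decomposition. You therefore need no overlap comparable to $\ell_m^N$, no enlargement of $m$, and no appeal to the shifted families.

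Your compactness fallback is also complete:
\begin{itemize}
\item $\|u_n-E_mu_n\|_{X(\Omega)}\lesssim\ell_m\||\nabla u_n|\|_{X(\Omega)}\to0$;
\item the functions $E_mu_n$ stay in a bounded subset of a finite-dimensional space;
\item the limit has zero weak gradient, because $|\int_\Omega\partial_ju_n\,\varphi|\le\||\nabla u_n|\|_{X(\Omega)}\|\varphi\|_{X'}\to0$ for every $\varphi\in C^\infty_{\mathrm{c}}(\Omega)$.
\end{itemize}

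As for what each approach buys: yours is self-contained within the paper's own machinery. It mirrors the averaging operator $E_m$ of Theorem \ref{thm-domain} and the contradiction scheme of Theorem \ref{thm:frac-poincare}. Its constant depends on the chosen cover through the smallest chain overlap, or is non-quantitative in the contradiction version. The paper's route is a short reduction to known results and yields a direct global estimate, but it relies on an external lemma. Since $C_{X,\Omega}$ is used later only as a threshold in Theorem \ref{thm:frac-poincare}, either form of the constant suffices there.
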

Now, we prove Theorem \ref{thm:frac-poincare}.
\begin{proof}[Proof of Theorem \ref{thm:frac-poincare}]
We argue by contradiction.
Assume that \eqref{eq:fractional} is false. Then there exist
a monotonically increasing sequence $\{s_k\}_{k\in\mathbb{N}}\subset (0,1)$
and a sequence $\{f_k\}_{k\in\mathbb{N}}\subset X(\Omega)$ such that $\lim_{k\to\infty}s_k=1$ and, for any $k\in\mathbb{N},$
\begin{align}\label{weishenme}
\|f_k-(f_k)_\Omega\|_{X(\Omega)}
>
D\left\|
\left[(1-s_k)
\int_\Omega
\frac{|f_k(\cdot)-f_k(y)|^q}{|\cdot-y|^{N+s_kq}}\,dy
\right]^{\frac1q}
\right\|_{X(\Omega)}.
\end{align}
For any $k\in\mathbb{N},$ let
\begin{align*}
u_k:=\frac{f_k-(f_k)_\Omega}{\|f_k-(f_k)_\Omega\|_{X(\Omega)}}.
\end{align*}
By this and \eqref{weishenme}, we have, for any $k\in\mathbb{N}$,
$\|u_k\|_{X(\Omega)}=1$,
$(u_k)_{\Omega}=0,$ and
\begin{align*}
1=\|u_k\|_{X(\Omega)}
>D(1-s_k)^{\frac{1}{q}}\left\|\mathcal{D}^{s_k}_{q,\Omega}u_k\right\|_{X(\Omega)}.
\end{align*}
Using this and applying Theorem \ref{thm-domain}, after passing to a subsequence,
we conclude that
there exists $u\in W^{1,X}(\Omega)$ such that
\begin{align*}
\lim_{k\to\infty}u_k=u\ \ \text{in}\ \ X(\Omega)
\end{align*}
and
\begin{align*}
\|\,|\nabla u|\,\|_{X(\Omega)}
\leq
B_{q,N}\varliminf_{k\to\infty}(1-s_k)^{\frac{1}{q}}\left\|\mathcal{D}^{s_k}_{q,\Omega}u_k\right\|_{X(\Omega)}
\leq \frac{B_{q,N}}{D}.
\end{align*}
Moreover, it is easy to verify that $u_{\Omega}=0$ and $\|u\|_{X(\Omega)}=1.$
From this and Proposition \ref{x-poincare}, it follows that
\begin{align*}
1=\|u\|_{X(\Omega)}=\|u-u_\Omega\|_{X(\Omega)}
\le
C_{X,\Omega}\|\,|\nabla u|\,\|_{X(\Omega)}
\leq
\frac{C_{X,\Omega}B_{q,N}}{D}.
\end{align*}
This contradicts the assumption
$D>C_{X,\Omega}B_{q,N},$
which completes the proof of Theorem \ref{thm:frac-poincare}.
\end{proof}
We next recall the concept of
Muckenhoupt weights
(see, for instance, \cite[Definition 7.1.2]{g14c}).
\begin{definition}
Let $p\in[1,\infty)$ and $\omega$ be a nonnegative locally integrable
function on $\mathbb{R}^N$. Then
$\omega$ is called an \emph{$A_{p}(\mathbb{R}^N)$ weight}, denoted by
$\omega\in A_{p}(\mathbb{R}^N)$, if, when $p\in(1,\infty)$,
\begin{align*}
[\omega]_{A_{p}(\mathbb{R}^N)}
:=\sup _{\text{cube }Q}\left[
\frac{1}{|Q|}\int_{Q}\omega(x)\,dx\right]\left\{
\frac{1}{|Q|}\int_{Q}[\omega(x)]^{-\frac{1}{p-1}} \,dx\right\}^{p-1}<\infty
\end{align*}
and, when $p=1$,
\begin{align*}
[\omega]_{A_{1}(\mathbb{R}^N)}
:=\sup _{\text{cube }Q}\left[
\frac{1}{|Q|}\int_{Q}\omega(x)\,dx\right]\left\{\mathop{\mathrm{ess\,sup}}_{x\in Q}
[\omega(x)]^{-1}\right\}<\infty.
\end{align*}
In addition, the \emph{class $A_\infty(\mathbb{R}^N)$} is defined by setting
$
A_\infty(\mathbb{R}^N):=\bigcup_{p\in[1,\infty)}A_{p}(\mathbb{R}^N).
$
\end{definition}

Then we present the concept of weighted Lebesgue spaces.

\begin{definition}
Let $p\in[1,\infty)$ and $\omega\in A_{\infty}(\mathbb{R}^N)$.
Let $\Omega$ be an open set
of $\mathbb{R}^N.$
The \emph{weighted Lebesgue space} $L_{\omega}^{p}(\Omega)$ is
defined to be the set of all measurable
functions $f$ on $\Omega$ such that
\begin{align*}
\|f\|_{L^p_\omega(\Omega)}
:=\left[\int_{\Omega}|f(x)|^p\omega(x)\,dx\right]
^{\frac{1}{p}}<\infty.
\end{align*}
\end{definition}
Now, we prove Theorem \ref{thm:weighted-poincare}.
\begin{proof}[Proof of Theorem \ref{thm:weighted-poincare}]
By \cite[Theorem 1.34]{r87}, we find that $L^p_\omega(\mathbb{R}^N)$ has an absolutely continuous norm.
By the assumption that  $\omega\in A_{p}(\mathbb{R}^N)$ and \cite[Proposition 7.1.5(4)]{g14c}, we conclude   that $\omega^{1-p'}\in A_{p'}(\mathbb{R}^N).$
From this and the boundedness of the Hardy--Littlewood maximal operator on weighted Lebesgue spaces (see, for instance, \cite[Theorem 7.1.9]{g14c}), we deduce that $\mathcal{M}$ is   bounded on $L^{p'}_{\omega^{1-p'}}(\mathbb{R}^N).$
Moreover, by the standard duality theory of the Lebesgue space, we find that
$[L^p_{\omega}(\mathbb{R}^N)]^{'}
=L^{p'}_{\omega^{1-p'}}(\mathbb{R}^N).$
This implies that $\mathcal{M}$ is  bounded on $[L^p_{\omega}(\mathbb{R}^N)]^{'}.$
Thus, all the assumptions of Theorem \ref{thm:frac-poincare}  hold with $X:=L^p_{\omega}(\mathbb{R}^N)$ and $r:=1.$  Applying Theorem \ref{thm:frac-poincare}   with $X:=L^p_{\omega}(\mathbb{R}^N)$, we conclude the desired conclusion. This then finishes the proof of Theorem \ref{thm:weighted-poincare}.
\end{proof}
\subsection{Proof of Theorem \ref{thm:numan}}\label{sec2.6}
In this subsection, we prove Theorem \ref{thm:numan}.
\begin{definition}
Let $p,q\in (0,\infty)$, $\Omega\subset\mathbb{R}^{N}$ be a domain, and $\omega\in A_\infty(\mathbb{R}^N).$
The \emph{mixed Lebesgue space}
$L^{p,q}_{\omega}(\Omega\times\Omega)$ is defined to be the
set of all  measurable functions $F$ on $\Omega\times\Omega$
such that
$$\|F\|_{L^{p,q}_{\omega}(\Omega\times\Omega)}:=\left\{\int_\Omega
\left[\int_\Omega |F(x,y)|^q\,d y\right]^{\frac{p}{q}}
\omega(x)\,dx\right\}^{\frac{1}{p}}<\infty.
$$
\end{definition}
\begin{definition}\label{kongjian}
Let $p,q,s\in (0,\infty)$, $\Omega\subset\mathbb{R}^{N}$ be a domain, and $\omega\in A_\infty(\mathbb{R}^N).$
The \emph{weighted Triebel--Lizorkin space}
$F^{s,\omega}_{p,q}(\Omega)$ is defined to be the
set of all  measurable functions $u$ on $\Omega$
such that
\begin{align*}
\|u\|_{F^{s,\omega}_{p,q}(\Omega)}
&:=
\|u\|_{L^p_{\omega}(\Omega)}+[u]_{F^{s,\omega}_{p,q}(\Omega)}\\
&:=
\|u\|_{L^p_{\omega}(\Omega)}+\left\{\int_\Omega
\left[\int_\Omega\frac{|u(x)-u(y)|^q}{|x-y|^{N+sq}}\,d y\right]^{\frac{p}{q}}
\omega(x)\,dx\right\}^{\frac{1}{p}}<\infty.
\end{align*}
Moreover, let
\begin{align*}
F^{s,\omega}_{p,q,0}(\Omega)
:=\left\{u\in F^{s,\omega}_{p,q}(\Omega):\ \int_{\Omega}u(x)\,dx=0\right\}.
\end{align*}
\end{definition}

For any $u\in F^{s,\omega}_{p,q}(\Omega)$,
let
\begin{align*}
J_s(u):=\Phi_s(u)-\int_\Omega f(x)u(x)\,dx.
\end{align*}
\begin{lemma}\label{lem:bianfen}
Let the notation be  as in Theorem \ref{thm:numan}. Then the following  statements hold.
\begin{itemize}
\item [$\mathrm{(i)}$] $F^{s,\omega}_{p,q}(\Omega)$
is a reflexive Banach space and
$F^{s,\omega}_{p,q,0}(\Omega)$
is a closed reflexive  subspace
of $F^{s,\omega}_{p,q}(\Omega)$.
\item [$\mathrm{(ii)}$] $J_s$ is coercive  on
$F^{s,\omega}_{p,q,0}(\Omega);$ that is,
\begin{align*}
\lim_{\|u\|_{F^{s,\omega}_{p,q}(\Omega)}\to\infty}J_s(u)=\infty.
\end{align*}
\item  [$\mathrm{(iii)}$] $J_s$ is weakly lower semicontinuous  on
$F^{s,\omega}_{p,q,0}(\Omega);$ that is, for any sequence $\{u_k\}_{k\in\mathbb{N}}\subset F^{s,\omega}_{p,q,0}(\Omega)$ weakly converging to $u\in F^{s,\omega}_{p,q,0}(\Omega)$,
\begin{align*}
J_s(u)\leq\varliminf_{k\to\infty}J_s(u_k).
\end{align*}
\item  [$\mathrm{(iv)}$]
$J_s$ is  strictly convex on
$F^{s,\omega}_{p,q,0}(\Omega);$ that is, for any $\lambda\in (0,1)$ and $u,v\in F^{s,\omega}_{p,q,0}(\Omega)$ with $u\not=v$,
\begin{align*}
J_s(\lambda u +(1-\lambda)v)
< \lambda J_s(u)+(1-\lambda)J_s(v).
\end{align*}
\item  [$\mathrm{(v)}$] $\Phi_s$ is G\^ateaux differentiable on
$F^{s,\omega}_{p,q,0}(\Omega)$; more precisely,
for any given $u,v\in F^{s,\omega}_{p,q,0}(\Omega)$,
\begin{align*}
\frac{d}{dt}\Phi_s(u+tv)\bigg|_{t=0}=A_s(u,v).
\end{align*}
\end{itemize}
\end{lemma}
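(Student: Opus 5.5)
We handle the five items in order, since each one feeds into the next. The guiding observation is that $F^{s,\omega}_{p,q}(\Omega)$ embeds isometrically into a product of two reflexive spaces, so reflexivity and weak semicontinuity come from standard convexity arguments. Coercivity then comes from Theorem \ref{thm:weighted-poincare}.

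For (i), consider the map
\[
u\mapsto \Bigl(u,\ \tfrac{u(x)-u(y)}{|x-y|^{N/q+s}}\Bigr)
\]
from $F^{s,\omega}_{p,q}(\Omega)$ into $L^p_\omega(\Omega)\times L^{p,q}_\omega(\Omega\times\Omega)$. It is an isometry, with the product carrying the sum of the norms. Both factors are uniformly convex for $p,q\in(1,\infty)$, the mixed space being the weighted Bochner space $L^p_\omega(\Omega;L^q(\Omega))$. Hence both factors are reflexive. Completeness goes as follows. Take a Cauchy sequence; it converges in $L^p_\omega$ to some $u$, and a subsequence converges a.e. Fatou's lemma then shows $u$ lies in the space and that the seminorm of $u_k-u$ tends to $0$. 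So the image is closed, and a closed subspace of a reflexive space is reflexive. The zero-average condition is closed: since $\omega\in A_p$, we have $\omega^{1-p'}\in L^1_{\rm loc}$, so by Hölder's inequality $u\mapsto\int_\Omega u\,dx$ is a bounded functional on $L^p_\omega(\Omega)$.

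For (ii), let $u\in F^{s,\omega}_{p,q,0}(\Omega)$. Then $u_\Omega=0$, so Theorem \ref{thm:weighted-poincare} gives $\|u\|_{L^p_\omega}\le E(1-s)^{1/q}[u]$. Therefore $\|u\|_{F}\lesssim[u]$, and $\Phi_s(u)=\frac1p[u]^p$. For the linear term, Hölder's inequality gives $|\int fu|\le\|f\|_{L^{p'}_{\omega^{1-p'}}}\|u\|_{L^p_\omega}\lesssim[u]$. Combining these, $J_s(u)\ge \frac1p[u]^p-C[u]$, which tends to $\infty$ because $p>1$.

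For (iii), $\Phi_s$ is $\frac1p$ times the $p$-th power of a continuous seminorm, hence convex and continuous. A convex continuous functional is weakly lower semicontinuous by Mazur's lemma. The term $u\mapsto\int fu$ is a bounded linear functional, hence weakly continuous.

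Item (iv) is the delicate step. Write $G(x,y)=\frac{u(x)-u(y)}{|x-y|^{N/q+s}}$, so that $\Phi_s(u)=\frac1p\|G\|^p_{L^{p,q}_\omega}$. The norm of $L^{p,q}_\omega$ is strictly convex, and $t\mapsto t^p$ is strictly convex and increasing. Hence $\|\cdot\|^p$ is strictly convex: equality in the convexity inequality forces $\|G_u\|=\|G_v\|$ and then equality in the triangle inequality, which gives $G_u=G_v$. If $G_u=G_v$, then $u-v$ is a.e. constant on the connected domain $\Omega$. Zero average then forces $u=v$. Finally, adding the linear term preserves strict convexity.

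For (v), we differentiate under the integrals. The inner map $t\mapsto\int|u(x)-u(y)+t(v(x)-v(y))|^q K\,dy$, with $K(x,y)=|x-y|^{-N-sq}$, has derivative
\[
q\int|a|^{q-2}a\,b\,K\,dy \qquad (a=u(x)-u(y),\ b=v(x)-v(y)),
\]
justified by the mean value theorem, the bound $|a+tb|^{q-1}|b|\le 2^{q}(|a|^{q-1}+|b|^{q-1})|b|$, and Hölder's inequality. The outer power $\frac pq$ is then handled by the chain rule. The main obstacle is dominating the resulting difference quotients uniformly for $|t|\le1$. Set $\mathcal{D}_u=\mathcal D^s_{q,\Omega}u$ and $\mathcal{D}_v=\mathcal D^s_{q,\Omega}v$. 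The difference quotients are bounded by $C(\mathcal{D}_u+\mathcal{D}_v)^{p-1}\mathcal{D}_v$, which is $\omega\,dx$-integrable by Hölder's inequality with exponents $p'$ and $p$. Dominated convergence then yields $A_s(u,v)$. Points where $\mathcal D^s_{q,\Omega}u(x)=0$ need care when $p<q$, since the factor $[\,\cdot\,]^{p/q-1}$ blows up there. Such points contribute zero: there $a=0$ for a.e. $y$, so the inner integral vanishes. At these points I would bound the difference quotient directly by $|t|^{p-1}\mathcal{D}_v^p\to0$.
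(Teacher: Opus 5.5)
Your proposal is correct. It follows the paper's overall architecture: the isometric embedding $u\mapsto(u,Tu)$ into $L^p_\omega(\Omega)\times L^{p,q}_\omega(\Omega\times\Omega)$, coercivity from Theorem \ref{thm:weighted-poincare} plus H\"older's inequality, weak lower semicontinuity, and dominated convergence for the G\^ateaux derivative with the same majorant $C(\mathcal{D}^s_{q,\Omega}u+\mathcal{D}^s_{q,\Omega}v)^{p-1}\mathcal{D}^s_{q,\Omega}v$. Several sub-steps are argued differently, as follows.

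In (i), the paper gets reflexivity of $L^{p,q}_\omega(\Omega\times\Omega)$ from the Benedek--Panzone duality $[L^{p,q}_\omega]^\ast=L^{p',q'}_{\omega^{1-p'}}$ and simply asserts that the image of $\mathcal{I}$ is closed. You use uniform convexity and Milman--Pettis instead. You also supply two things the paper leaves implicit: the Fatou argument for completeness, and the boundedness of $u\mapsto\int_\Omega u$ (via $\omega^{1-p'}\in L^1_{\mathrm{loc}}$), which is what makes $F^{s,\omega}_{p,q,0}(\Omega)$ closed.

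In (iii), your use of Mazur's lemma for the convex continuous functional $\Phi_s$ is interchangeable with the paper's argument via weak continuity of $T$ and weak lower semicontinuity of the norm.

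The most substantive difference is (iv). The paper justifies strict convexity only by strict convexity of $t\mapsto t^p/p$ and mere convexity of the norm. That does not rule out equality when $[u]_{F^{s,\omega}_{p,q}(\Omega)}=[v]_{F^{s,\omega}_{p,q}(\Omega)}$: in a normed space whose unit sphere contains a segment, $\|\cdot\|^p$ is not strictly convex. The paper also never explains why $Tu=Tv$ forces $u=v$. Your appeal to strict convexity of the $L^{p,q}_\omega$-norm, together with the zero-average condition, is exactly what is needed there.

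Two small simplifications are available. First, connectedness of $\Omega$ is not needed in (iv): the kernel is positive on all of $\Omega\times\Omega$, so Fubini alone shows $u-v$ is a.e. constant. Second, in (v), zeros of $\mathcal{D}^s_{q,\Omega}u$ occur only when $u\equiv 0$. Indeed, $\mathcal{D}^s_{q,\Omega}u(x)=0$ at a single $x$ already forces $u=u(x)$ a.e. on $\Omega$. So for nonzero mean-zero $u$ one has $\mathcal{D}^s_{q,\Omega}u>0$ everywhere and the chain rule applies directly. Your separate treatment of those points just reproduces the paper's convention $A_s(0,v):=0$.
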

\begin{proof}
We first prove (i).
By \cite[Theorem 1(a)]{bp61}, we  conclude  $[L^{p,q}_\omega(\Omega\times\Omega)]^\ast=L^{p',q'}_{\omega^{1-p'}}(\Omega\times\Omega)$, where
$[L^{p,q}_\omega(\Omega\times\Omega)]^\ast$
denotes the dual  space of $L^{p,q}_\omega(\Omega\times\Omega).$
Thus,  $L^{p,q}_\omega(\Omega\times\Omega)$ is  reflexive. Let
\begin{align*}
E:=L^p_{\omega}(\Omega)\times L^{p,q}_\omega(\Omega\times\Omega)
\end{align*}
be equipped with the
\emph{product norm} that, for any $(g,G)\in E,$
\begin{align*}
\|(g,G)\|_E
:=
\|g\|_{L^p_\omega(\Omega)}
+
\|G\|_{L^{p,q}_\omega(\Omega\times\Omega)}.
\end{align*}
Since $L^p_\omega(\Omega)$ and $L^{p,q}_\omega(\Omega\times\Omega)$ are reflexive, it follows that $E$ is reflexive.
For any  $u\in F^{s,\omega}_{p,q}(\Omega)$ and $x,y\in\Omega,$ let
\begin{align*}
T(u)(x,y):=\frac{u(x)-u(y)}{|x-y|^{\frac{N}{q}+s}}\ \ \text{and}\ \ \mathcal{I}(u):=(u,Tu).
\end{align*}
Then $F^{s,\omega}_{p,q}(\Omega)$ is isometrically isomorphic to
$\mathcal{I}(F^{s,\omega}_{p,q}(\Omega))$. On the other hand, it is easy to verify that
$\mathcal{I}(F^{s,\omega}_{p,q}(\Omega))$ is a closed subspace
of $E$. From this and the fact that $E$ is reflexive, we infer that $\mathcal{I}(F^{s,\omega}_{p,q}(\Omega))$ is reflexive.
Thus, $F^{s,\omega}_{p,q}(\Omega)$ is reflexive. This shows (i).

By Theorem \ref{thm:weighted-poincare} and H\"older's inequality,  we find that  there exists a positive constant $C$ such that, for any $u\in F^{s,\omega}_{p,q,0}(\Omega),$
\begin{align*}
J_s(u)\geq
\frac{1}{p}[u]_{F^{s,\omega}_{p,q}(\Omega)}^p-C(1-s)^{\frac{1}{q}}\|f\|_{L^{p'}_{\omega^{1-p'}}(\Omega)}[u]_{F^{s,\omega}_{p,q}(\Omega)}.
\end{align*}
This implies that $J_s$ is coercive on $F^{s,\omega}_{p,q,0}(\Omega)$.
This proves (ii).

Assume that  the sequence $\{u_k\}_{k\in\mathbb{N}}$ weakly
converges to $u\in F^{s,\omega}_{p,q,0}(\Omega).$
Since $T$ is a  bounded linear  map from $F^{s,\omega}_{p,q}(\Omega)$ to $L^{p,q}_\omega(\Omega\times\Omega),$ it follows that the sequence $\{T(u_k)\}_{k\in\mathbb{N}}$ weakly
converges to $T(u)\in L^{p,q}_{\omega}(\Omega\times\Omega).$
By this and the fact that
$\|\cdot\|_{L^{p,q}_{\omega}(\Omega\times\Omega)}$ is weakly lower semicontinuous on $L^{p,q}_{\omega}(\Omega\times\Omega)$, we have
\begin{align*}
\left\{\int_\Omega\left[\int_\Omega\frac{|u(x)-u(y)|^q}{|x-y|^{N+sq}}\,dy\right]^{\frac{p}{q}}\omega(x)\,dx\right\}^{\frac{1}{p}}
=\|T(u)\|_{L^{p,q}_\omega(\Omega\times\Omega)}
\leq
\varliminf_{k\to\infty}\|T(u_k)\|_{L^{p,q}_\omega(\Omega\times\Omega)}.
\end{align*}
This implies that  $J_s(u)$ is  weakly lower semicontinuous on
$F^{s,\omega}_{p,q,0}(\Omega)$.
This shows (iii).

Using  the fact that $\frac{1}{p}t^p$ is strictly
convex  function on $(0,\infty)$ and
the norm $\|\cdot\|_{L^{p,q}_\omega(\Omega\times\Omega)}$ is convex on $L^{p,q}_\omega(\Omega\times\Omega)$, we conclude  that $J_s$ is strictly convex on $F^{s,\omega}_{p,q,0}(\Omega)$.
This proves (iv).

Finally, we show (v).
Let $u,v\in F^{s,\omega}_{p,q,0}(\Omega)$.
If $u\equiv 0$, then
\begin{align*}
\Phi_s(u+tv)=\Phi_s(tv)
=|t|^p\Phi_s(v).
\end{align*}
This implies that
\begin{align*}
\lim_{t\to 0}\frac{\Phi_s(tv)-\Phi_s(0)}{t}
=
\lim_{t\to 0}\frac{|t|^p}{t}\Phi_s(v)=0=A_s(0,v).
\end{align*}
We now assume that $u\not\equiv0$ in $F^{s,\omega}_{p,q,0}(\Omega)$.
Then, by \eqref{1.7x} and \eqref{1.7y}, we have
\begin{align}\label{sanjiaop2}
\frac{\Phi_s(u+tv)-\Phi_s(u)}{t}
=\frac{1}{tp}\int_{\Omega}
\left\{\left[\mathcal{D}^s_{q,\Omega}(u+tv)(x)\right]^p-\left[\mathcal{D}^s_{q,\Omega}(u)(x)\right]^p\right\}\omega(x)\,dx.
\end{align}
Using Minkowski's inequality
of the norm of
$L^q(\Omega)$, we obtain, for any $t\in\mathbb{R}$ and $x\in\Omega,$
\begin{align}\label{sanjiao}
\left|\mathcal{D}^s_{q,\Omega}(u+tv)(x)-\mathcal{D}^s_{q,\Omega}(u)(x)\right|
\leq
|t|\mathcal{D}^s_{q,\Omega}(v)(x).
\end{align}
On the other hand, we conclude that there exists a positive constant $C$ such that, for any $a,b\in(0,\infty)$,
\begin{align*}
|a^p-b^p|
\leq
C(a+b)^{p-1}|a-b|
\end{align*}
and
\begin{align*}
(a+b)^{p-1}b
\leq
C(a^p+b^p).
\end{align*}
From this and \eqref{sanjiao}, we deduce that, for any $t\in(-1,1)$ and $x\in\Omega,$
\begin{align}\label{qishiwang}
&\left|\frac{\left[\mathcal{D}^s_{q,\Omega}(u+tv)(x)\right]^p-\left[\mathcal{D}^s_{q,\Omega}(u)(x)\right]^p}{t}\right|\notag\\
&\quad\leq
C\left[\mathcal{D}^s_{q,\Omega}(u+tv)(x)+\mathcal{D}^s_{q,\Omega}(u)(x)\right]^{p-1}\left[\mathcal{D}^s_{q,\Omega}(v)(x)\right]\notag\\
&\quad\leq
C\left[2\mathcal{D}^s_{q,\Omega}(u)(x)+|t|\mathcal{D}^s_{q,\Omega}(v)(x)\right]^{p-1}\left[\mathcal{D}^s_{q,\Omega}(v)(x)\right]\notag\\
&\quad\lesssim
\left[\mathcal{D}^s_{q,\Omega}(u)(x)\right]^p+\left[\mathcal{D}^s_{q,\Omega}(v)(x)\right]^p.
\end{align}
By this, \eqref{sanjiaop2}, and
the dominated convergence theorem, we obtain
\begin{align}\label{bimian}
\frac{d}{dt}\Phi_s(u+tv)\bigg|_{t=0}
=\frac{1}{p}\int_{\Omega}\frac{d}{dt}[G_x(t)]^{\frac{p}{q}}\bigg|_{t=0}\omega(x)\,dx,
\end{align}
where, for any $t\in\mathbb{R}$ and $x\in\Omega$,
$$
G_x(t)
:=
\int_\Omega
\frac{|u(x)-u(y)+t[v(x)-v(y)]|^q}{|x-y|^{N+sq}}\,dy.
$$
It is easy to verify that, for any $a,b\in\mathbb{R}$,
\begin{align}\label{dengdai}
\left.
\frac{d}{dt}|a+tb|^q
\right|_{t=0}
=
q|a|^{q-2}ab.
\end{align}
From an argument  similar to that used in \eqref{qishiwang} and from H\"older's inequality, we infer that, for any $t\in(-1,1)$ and $x\in\Omega,$
\begin{align*}
&\frac{|G_x(t)-G_x(0)|}{|t|}\\
&\quad\leq
\int_{\Omega}\frac{
\left|
|u(x)-u(y)+t[v(x)-v(y)]|^q
-
|u(x)-u(y)|^q
\right|
}
{|t||x-y|^{N+sq}}\,dy\\
&\quad\lesssim
\int_{\Omega}\frac{|u(x)-u(y)|^{q-1}|v(x)-v(y)|}{|x-y|^{N+sq}}
+
\frac{|v(x)-v(y)|^q}{|x-y|^{N+sq}}\,dy\\
&\quad\leq
\left[
\int_\Omega
\frac{|u(x)-u(y)|^q}{|x-y|^{N+sq}}\,dy
\right]^{\frac{q-1}{q}}
\left[
\int_\Omega
\frac{|v(x)-v(y)|^q}{|x-y|^{N+sq}}\,dy
\right]^{\frac{1}{q}}+\int_{\Omega}\frac{|v(x)-v(y)|^q}{|x-y|^{N+sq}}\,dy.
\end{align*}
Using this, the dominated convergence theorem, and \eqref{dengdai}, we conclude that, for  almost every  $x\in\Omega,$
\begin{align*}
\frac{d}{dt}G_{x}(t)\bigg|_{t=0}
=
q
\int_\Omega
\frac{
|u(x)-u(y)|^{q-2}[u(x)-u(y)][v(x)-v(y)]
}
{|x-y|^{N+sq}}
\,dy.
\end{align*}
This,  together with \eqref{bimian}, implies that
\begin{align*}
\frac{d}{dt}\Phi_s(u+tv)\bigg|_{t=0}
&=\frac{1}{q}\int_\Omega[G_{x}(0)]^{\frac{p}{q}-1}G_{x}^{'}(0)\omega(x)\,dx
=\int_\Omega
\omega(x)
\left[
\int_\Omega
\frac{|u(x)-u(y)|^q}{|x-y|^{N+sq}}\,dy
\right]^{\frac{p}{q}-1}
\\
&\quad\times
\int_\Omega
\frac{
|u(x)-u(y)|^{q-2}[u(x)-u(y)][v(x)-v(y)]
}
{|x-y|^{N+sq}}
\,dy\,dx\\
&=A_s(u,v).
\end{align*}
This proves (v), which  completes the proof of Lemma \ref{lem:bianfen}.
\end{proof}
Now, we prove Theorem \ref{thm:numan}.
\begin{proof}[Proof of Theorem \ref{thm:numan}]
By the  standard variational method and Lemma \ref{lem:bianfen}, we conclude that there exists  a  unique weak solution of \eqref{qiangjie}.
For the completeness of the proof, we give some details here.
Let
\begin{align*}
M:=\inf_{u\in F^{s,\omega}_{p,q,0}(\Omega)}J_s(u).
\end{align*}
Since $J_s$ is coercive on $F^{s,\omega}_{p,q,0}(\Omega)$ (Lemma \ref{lem:bianfen}(ii)),
we deduce that $M\in (-\infty,\infty)$
and there exists a   bounded sequence
$\{u_k\}_{k\in\mathbb{N}}\subset F^{s,\omega}_{p,q,0}(\Omega)$ such that $\lim_{k\to\infty}J_s(u_k)=M.$
By Lemma \ref{lem:bianfen}(i), we find that $F^{s,\omega}_{p,q,0}(\Omega)$
is reflexive. Using this and  the  Eberlein--\v{S}mulian theorem, we conclude that there exist a subsequence $\{u_{k_i}\}_{i\in\mathbb{N}}$
and a function  $u\in F^{s,\omega}_{p,q,0}(\Omega)$
such that $\{u_{k_i}\}_{i\in\mathbb{N}}$
weakly converges to $u\in F^{s,\omega}_{p,q,0}(\Omega).$
From this and the  fact that
$J_s$ is weakly lower semicontinuous on $F^{s,\omega}_{p,q,0}(\Omega)$ (Lemma \ref{lem:bianfen}(iii)), we infer that
$$
J_s(u)
\leq
\varliminf_{i\to\infty}J_s(u_{k_i})
=M.
$$
Thus,  $J_s$ attains its minimum at $u.$
 Let $v\in F^{s,\omega}_{p,q,0}(\Omega).$
For any $t\in\mathbb{R},$ let
$f(t):=J_s(u+tv).$  Then $f$
has a minimum at $t=0.$
By this and Lemma \ref{lem:bianfen}(v), we  find that
\begin{align*}
0&=f'(0)=
\left.
\frac{d}{dt}J_s(u+tv)
\right|_{t=0}
=
A_s(u,v)-\int_\Omega f(x)v(x)\,dx.
\end{align*}
This proves that $u$ is a weak solution of \eqref{eq:weak}.

Next,  we prove uniqueness by using the strict convexity of $J_s$ on $F^{s,\omega}_{p,q,0}(\Omega).$
Let $u$ be a weak solution of \eqref{eq:weak}.
We first show that   $J_s$ attains its minimum at $u.$
Let $v\in F^{s,\omega}_{p,q,0}(\Omega)$. For any $t\in \mathbb{R}$, let $g(t):= J_s(u+t(v-u)).$
Using the fact that $J_s$ is strictly convex on  $F^{s,\omega}_{p,q,0}(\Omega)$ (Lemma \ref{lem:bianfen}(iv)), we conclude that $g$ is  strictly convex on $\mathbb{R}.$ This  implies that, for any $t\in (0,1)$,
\begin{align*}
g(t)\leq (1-t)g(0)+tg(1)
\end{align*}
and hence
\begin{align*}
\frac{g(t)-g(0)}{t}
\leq
g(1)-g(0).
\end{align*}
Letting $t\to 0^+$ and using Lemma \ref{lem:bianfen}(v), we  obtain
\begin{align*}
0=\left.
\frac{d}{dt}J_s(u+t(v-u))
\right|_{t=0}
\leq g(1)-g(0)=J_s(v)-J_s(u).
\end{align*}
This proves that $J_s$ has a minimum at $u.$
Now, assume that $u_1$ and $u_2$
are weak solutions of \eqref{eq:weak}. If $u_1\not=u_2,$ then $J_s(u_1)=J_s(u_2)=M.$
Since $J_s$ is strictly convex on  $F^{s,\omega}_{p,q,0}(\Omega)$ (Lemma \ref{lem:bianfen}(iv)),
it follows that
\begin{align*}
J_s\left(\frac{u_1+u_2}{2}\right)
<
\frac{1}{2}J_s(u_1)+\frac{1}{2}J_s(u_2)
=M.
\end{align*}
This is impossible. Thus, $u_1=u_2.$ This proves the uniqueness of the weak solution
of \eqref{qiangjie}.

Finally, we show \eqref{huxiao}.  Let $u$ be the weak solution of \eqref{eq:weak}.
Letting $v:=u$ in
\eqref{eq:weak}, we obtain
\begin{align*}
\int_{\Omega}f(x)u(x)\,dx
=A_s(u,u)
=[u]_{F^{s,\omega}_{p,q}(\Omega)}^p.
\end{align*}
From this, H\"older's inequality, and Theorem \ref{thm:weighted-poincare}, we deduce that
\begin{align*}
[u]_{F^{s,\omega}_{p,q}(\Omega)}^p
\leq
\|f\|_{L^{p'}_{\omega^{1-p'}}(\Omega)}
\|u\|_{L^p_\omega(\Omega)}
\leq
E (1-s)^{\frac{1}{q}}
\|f\|_{L^{p'}_{\omega^{1-p'}}(\Omega)}[u]_{F^{s,\omega}_{p,q}(\Omega)}.
\end{align*}
Thus,
\begin{align}\label{liliang}
[u]_{F^{s,\omega}_{p,q}(\Omega)}
\leq
E^{\frac{1}{p-1}}
(1-s)^{\frac{1}{q(p-1)}}
\|f\|_{L^{p'}_{\omega^{1-p'}}(\Omega)}^{\frac{1}{p-1}}.
\end{align}
This, combined with Theorem \ref{thm:weighted-poincare}, implies that
\begin{align*}
\|u\|_{L^p_{\omega}(\Omega)}
\leq
E(1-s)^{\frac{1}{q}}[u]_{F^{s,\omega}_{p,q}(\Omega)}
\leq
E^{\frac{p}{p-1}}(1-s)^{\frac{p}{q(p-1)}}
\|f\|_{L^{p'}_{\omega^{1-p'}}(\Omega)}^{\frac{1}{p-1}}.
\end{align*}
By this and \eqref{liliang},
we conclude that \eqref{huxiao} holds. This finishes the proof of Theorem \ref{thm:numan}.
\end{proof}
\section{Applications to Specific Function Spaces\label{shijie}}
In this section, we apply
Theorems \ref{thm-domain} and \ref{thm:frac-poincare},
respectively, to six concrete examples of ball
Banach function spaces, namely
weighted Lebesgue spaces
(Subsection \ref{ws}),
Morrey spaces
(Subsection \ref{mrs}), mixed-norm Lebesgue
spaces (Subsection \ref{ms}),
variable Lebesgue spaces Subsection
\ref{vs}),  Orlicz spaces
(Subsection \ref{os}),  and
Orlicz-slice
spaces (Subsection \ref{oss}).
All the obtained results are new.
These examples clearly demonstrate the universality and practical utility of our results; their extension to other (new) function spaces is evidently possible.
\subsection{Weighted Lebesgue Spaces}\label{ws}
Recall that the definition of weighted Lebesgue spaces is   given in Subsection \ref{pro:1.2}.
It has been pointed out
in \cite[Section 7.1]{shyy} that the weighted Lebesgue space
is a ball Banach function space but may not be a Banach function space. By Theorem \ref{thm-domain} and
the proof of Theorem \ref{thm:weighted-poincare},
we immediately have the following  theorem; we omit the details  here.
\begin{theorem}\label{thm:ws}
Let $p\in (1,\infty)$ and
$\omega\in A_{p}(\mathbb{R}^N)$. Then Theorem \ref{thm-domain} holds  with $X:=L^p_{\omega}(\mathbb{R}^N).$
\end{theorem}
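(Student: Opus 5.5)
The plan is to verify the hypotheses of Theorem \ref{thm-domain} for $X:=L^p_\omega(\mathbb{R}^N)$ with $r:=1$, and then apply that theorem. Taking $r=1$ gives $X^{\frac1r}=X$, so it suffices to check three things: $X$ is a BBF space, $X'$ has an absolutely continuous norm, and $\mathcal{M}$ is bounded on $X'$.

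First, $L^p_\omega(\mathbb{R}^N)$ is a BBF space. Properties (i) through (iii) of Definition \ref{Debqfs} and the triangle inequality hold because it is a weighted $L^p$ norm, and $\mathbf{1}_B\in X$ since $\omega\in L^1_{\mathrm{loc}}$. For the local integrability condition, Hölder's inequality gives, for any ball $B$,
\begin{align*}
\int_B|f|\le \|f\|_{L^p_\omega}\left[\int_B\omega^{1-p'}\right]^{\frac1{p'}},
\end{align*}
and $\omega^{1-p'}\in L^1_{\mathrm{loc}}$ because $\omega\in A_p(\mathbb{R}^N)$.

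Second, as already observed in the proof of Theorem \ref{thm:weighted-poincare}, $X'=L^{p'}_{\omega^{1-p'}}(\mathbb{R}^N)$. By \cite[Proposition 7.1.5(4)]{g14c}, $\omega^{1-p'}\in A_{p'}(\mathbb{R}^N)$, so \cite[Theorem 7.1.9]{g14c} gives the boundedness of $\mathcal{M}$ on $X'$.

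The only point not literally written in that earlier proof is absolute continuity of the norm of $X'$, as opposed to that of $X$. This follows from the dominated convergence theorem, since $p'<\infty$. For $g\in L^{p'}_{\omega^{1-p'}}$ and $\mathbf{1}_{E_j}\to0$ almost everywhere, we have $|g|^{p'}\omega^{1-p'}\mathbf{1}_{E_j}\to0$ almost everywhere. This sequence is dominated by the integrable function $|g|^{p'}\omega^{1-p'}$, so $\|g\mathbf{1}_{E_j}\|_{X'}\to0$.

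With all hypotheses of Theorem \ref{thm-domain} verified, both its compactness conclusion and its derivative estimate hold verbatim for $X=L^p_\omega(\mathbb{R}^N)$. There is no real obstacle; the only step needing a moment's care is checking absolute continuity on the associate space, which is handled by the dominated convergence argument above.
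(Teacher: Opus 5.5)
Your proposal is correct and follows the paper's route. The paper obtains Theorem \ref{thm:ws} by taking $r=1$ and reusing the verification in the proof of Theorem \ref{thm:weighted-poincare}: $X'=L^{p'}_{\omega^{1-p'}}(\mathbb{R}^N)$ and $\omega^{1-p'}\in A_{p'}(\mathbb{R}^N)$, so $\mathcal{M}$ is bounded on $X'$. Your dominated-convergence check of absolute continuity for $X'$ is the hypothesis Theorem \ref{thm-domain} actually requires (the paper literally cites absolute continuity of $X$ itself), so it tightens that step.
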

\begin{remark}
To the best of our knowledge, Theorem  \ref{thm:ws} is  new.
\end{remark}
\subsection{Morrey Spaces}\label{mrs}
Morrey spaces were introduced in 1938 by Morrey
\cite{Mo} in order to study the regularity of
solutions to partial differential equations. They
have important applications in the theory of
elliptic partial differential equations,
potential theory, and harmonic analysis (see,
for example, \cite{HMS20,HMS17,HSS18,HS17,s08,s09,s10,st07}).
\begin{definition}
For any $0<\alpha\leq p<\infty$, the \emph
{Morrey space $M_\alpha^p$} is defined to be the set of all the
measurable functions $f$ on $\mathbb{R}^{N}$ with the finite semi-norm
$$\|f\|_{M_\alpha^p}
:=\sup_{B\in\mathbb{B}}|B|^{\frac{1}{p}-\frac{1}{\alpha}}\|f\|_{L^\alpha(B)}.
$$
\end{definition}
\begin{remark}
As was indicated in \cite[p.\,86]{shyy},
the Morrey space $M^p_\alpha$ for any $1\leq \alpha\leq p<\infty$
is a ball 	Banach function space, but is not
a Banach function space in the terminology of
Bennett and Sharpley \cite{bs88}.
\end{remark}
Applying Theorems \ref{thm-domain} and \ref{thm:frac-poincare}, we have the following conclusions.
\begin{theorem}\label{thm:mrs}
Let $1<\alpha\leq p<\infty$. Then Theorems \ref{thm-domain} and \ref{thm:frac-poincare} hold with $X:=M^p_\alpha.$
\end{theorem}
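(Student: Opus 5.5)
The plan is to verify the hypotheses of Theorems \ref{thm-domain} and \ref{thm:frac-poincare} for $X:=M^p_\alpha$ with a suitable $r\in[1,\infty)$. Concretely, we need four facts: (a) $X$ and $X^{\frac1r}$ are BBF spaces; (b) $X'$ has an absolutely continuous norm; (c) $\mathcal{M}$ is bounded on $(X^{\frac1r})'$; (d) the constant $C_{X,\Omega}$ of Proposition \ref{x-poincare} exists. Fact (d) follows from Proposition \ref{x-poincare} once (a) and (c) hold.

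We choose $r\in(1,\alpha)$, which is possible since $\alpha>1$. Then $X^{\frac1r}=M^{p/r}_{\alpha/r}$ with $1<\alpha/r\le p/r<\infty$. By the remark after the definition of Morrey spaces (\cite[p.\,86]{shyy}), both $M^p_\alpha$ and $M^{p/r}_{\alpha/r}$ are BBF spaces, which gives (a).

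For (c), we use the known description of the associate space of a Morrey space as a block space. For $1<\beta\le t<\infty$, $(M^t_\beta)'$ coincides with the block space $\mathcal{H}^{t'}_{\beta'}$, with equivalent norms. Moreover, $\mathcal{M}$ is bounded on $\mathcal{H}^{t'}_{\beta'}$ whenever $\beta>1$; see, e.g., the results of Sawano et al.\ \cite{shyy} and the literature cited there. We apply this with $\beta=\alpha/r>1$ and $t=p/r$. If one prefers to avoid block spaces, the alternative is the extrapolation-type criterion: boundedness of $\mathcal{M}$ on $(X^{1/r})'$ follows from boundedness of $\mathcal{M}$ on $(X^{1/r})^{1/\theta}$ for a suitable $\theta$. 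For Morrey spaces this reduces to the Chiarenza--Frasca theorem for $\mathcal{M}$ on $M^{u}_{v}$ with $v>1$.

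The main obstacle is (b), absolute continuity of the norm of $X'=\mathcal{H}^{p'}_{\alpha'}$. The Morrey space itself lacks an absolutely continuous norm, but the block space does, and we would prove this directly.

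Take $g$ in the block space with a decomposition $g=\sum_j\lambda_j b_j$, where $\sum|\lambda_j|<\infty$ and each $b_j$ is a $(p',\alpha')$-block supported on a ball $B_j$ with $\|b_j\|_{L^{\alpha'}}\le|B_j|^{\frac1{\alpha'}-\frac1{p'}}$. Let $\mathbf1_{E_k}\to0$ almost everywhere, and fix $\epsilon>0$.

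First choose $J$ with $\sum_{j>J}|\lambda_j|<\epsilon$; since $|b_j\mathbf1_{E_k}|\le|b_j|$, the lattice property gives $\|\sum_{j>J}\lambda_j b_j\mathbf1_{E_k}\|\le\epsilon$. For each $j\le J$, the function $b_j\mathbf1_{E_k}/\|b_j\mathbf1_{E_k}\|_{L^{\alpha'}}\cdot\|b_j\|_{L^{\alpha'}}$ is again a block on $B_j$. Hence
\[
\|b_j\mathbf1_{E_k}\|_{\mathcal{H}}\le\frac{\|b_j\mathbf1_{E_k}\|_{L^{\alpha'}}}{\|b_j\|_{L^{\alpha'}}},
\]
and the right-hand side tends to $0$ by dominated convergence, since $\alpha'<\infty$. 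Summing the finitely many terms $j\le J$ gives $\varlimsup_k\|g\mathbf1_{E_k}\|_{X'}\lesssim\epsilon$, which proves (b).

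With (a)--(d) in hand, Theorems \ref{thm-domain} and \ref{thm:frac-poincare} apply verbatim with $X:=M^p_\alpha$.
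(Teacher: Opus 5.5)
Your proof is correct. Its skeleton matches the paper's: both identify $X'=(M^p_\alpha)'$ with the block space $\mathcal{B}^{p'}_{\alpha'}$ and use boundedness of $\mathcal{M}$ on block spaces. Two points differ from the paper.

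First, the choice of $r$. The paper takes $r=1$ and uses boundedness of $\mathcal{M}$ on $X'=\mathcal{B}^{p'}_{\alpha'}$ itself, which holds since $\alpha>1$ (Cheung--Ho \cite{ch14}). Your choice $r\in(1,\alpha)$ also works, but it buys nothing, since Theorems \ref{thm-domain} and \ref{thm:frac-poincare} allow $r=1$.

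Second, and more substantively, the absolute continuity of the norm of $X'$. The paper argues abstractly. By Sawano--Tanaka \cite{st15}, $M^p_\alpha$ is both the K\"othe dual and the Banach dual of $\mathcal{B}^{p'}_{\alpha'}$, so $(\mathcal{B}^{p'}_{\alpha'})^\ast=(\mathcal{B}^{p'}_{\alpha'})'$, and Lemma \ref{xingxing} then yields absolute continuity. You prove it by hand instead:
\begin{itemize}
\item you truncate a block decomposition so that its $\ell^1$-tail is small, using that each $b_j\mathbf 1_{E_k}$ is again a block;
\item you dispose of the finitely many remaining blocks by dominated convergence in $L^{\alpha'}$, which needs $\alpha'<\infty$.
\end{itemize}
Your route is elementary and self-contained, avoids both the predual theorem and the duality criterion of Lemma \ref{xingxing}, and shows exactly where $\alpha>1$ enters. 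The paper's route is a two-line citation.

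Both routes rest on the nontrivial inclusion $(M^p_\alpha)'\subset\mathcal{B}^{p'}_{\alpha'}$ (Sawano--Tanaka \cite{st15}, Theorem 4.1). That inclusion is what transfers the absolute continuity and the boundedness of $\mathcal{M}$ from the block space to $X'$. Cite it precisely rather than vaguely via \cite{shyy}, and cite Cheung--Ho for $\mathcal{M}$ on block spaces.

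Finally, drop the ``extrapolation-type'' alternative in your step (c). As stated it is false in general: boundedness of $\mathcal{M}$ on a convexification of $X^{\frac1r}$ does not imply boundedness on $(X^{\frac1r})'$. For example, with $X=L^\infty$ and $r=1$, every convexification of $X$ is $L^\infty$, on which $\mathcal{M}$ is bounded, yet $\mathcal{M}$ is unbounded on $X'=L^1$.
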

\begin{proof}
By \cite[Theorem 4.1]{st15},
we conclude that $(M^p_\alpha)^{'}=\mathcal{B}^{p'}_{\alpha'}$
and hence $(\mathcal{B}^{p'}_{\alpha'})'
=M^p_\alpha$, where $\mathcal{B}^{p'}_{\alpha'}$ denotes the block space (see \cite[Subsection 1.2]{st15} for the precise definition).
By \cite[Proposition 1.1]{st15}, we find that $(\mathcal{B}^{p'}_{\alpha'})^\ast=M^p_\alpha$. Thus, $(\mathcal{B}^{p'}_{\alpha'})^\ast=(\mathcal{B}^{p'}_{\alpha'})^{'}
=M^p_\alpha$. This, together with Lemma \ref{xingxing}, implies that $\mathcal{B}^{p'}_{\alpha'}$ has
an absolutely  continuous norm.
Moreover, by \cite[Theorem 3.1]{ch14}, we conclude that
$\mathcal{M}$ is  bounded on $\mathcal{B}^{p'}_{\alpha'}$. Thus, all the assumptions of Theorems \ref{thm-domain} and \ref{thm:frac-poincare} hold with $X:=M^p_\alpha$. Applying these, we obtain
the desired conclusions.
This finishes the proof of Theorem \ref{thm:mrs}.
\end{proof}
\begin{remark}
To the best of our knowledge, Theorem  \ref{thm:mrs} is  new.
\end{remark}
\subsection{Mixed-Norm Lebesgue Spaces\label{ms}}
\begin{definition}
For a given vector $\vec{p}:=(p_1,\ldots,p_N)
\in(0,\infty]^N$, the \emph{mixed-norm Lebesgue
space $L^{\vec{p}}$} is defined to be the
set of all measurable functions $f$ on
$\mathbb{R}^N$ with the finite quasi-norm
$$
\|f\|_{L^{\vec{p}}}:=\left\{\int_{\mathbb{R}}
\cdots\left[\int_{\mathbb{R}}|f(x_1,\ldots,
x_N)|^{p_1}\,dx_1\right]^{\frac{p_2}{p_1}}
\cdots\,dx_N\right\}^{\frac{1}{p_N}},
$$
where the usual modifications are made when $p_i=
\infty$ for some $i\in\{1,\ldots,N\}$.
\end{definition}
From the definition of $L^{\vec{p}}$, we easily deduce that
$L^{\vec{p}}$, where $\vec{p}\in(0,\infty)^N$,
is a ball quasi-Banach function space.
But, $L^{\vec{p}}$ may not be a quasi-Banach function space
(see, for instance, \cite[Remark 7.20]{zwyy}).
The study of mixed-norm Lebesgue spaces
can be traced back to
Benedek and Panzone \cite{bp61}.
For more studies
on  mixed-norm Lebesgue spaces,
we refer  to
\cite{cgn17,hlyyjga19,hlyypams,hy}
for the Hardy spaces associated with mixed-norm Lebesgue spaces,
to \cite{cgn17bs, gjn17, GN16,cgn19acha}
for the Triebel--Lizorkin and the Besov spaces
associated with mixed-norm Lebesgue spaces, and
to \cite{cgn17,CGN19MS,cg}
for the (anisotropic) mixed-norm Lebesgue spaces.

By an argument similar to that used in the proof of \cite[Theorem 5.6]{dlyyz23}
and applying Theorems \ref{thm-domain} and \ref{thm:frac-poincare}, we have the following conclusion; we omit the details here.
\begin{theorem}\label{thm:ms}
Let $\vec{p}\in(1,\infty)^N.$
Then Theorems \ref{thm-domain} and \ref{thm:frac-poincare} hold with $X:=L^{\vec{p}}.$
\end{theorem}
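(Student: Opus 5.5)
The plan is to verify, for $X:=L^{\vec{p}}$ with $\vec{p}\in(1,\infty)^N$, the hypotheses of Theorems \ref{thm-domain} and \ref{thm:frac-poincare}, and then apply those theorems directly. Following the proof of \cite[Theorem 5.6]{dlyyz23}, I will choose $r:=1$; if the maximal operator turns out to be easier to handle for some $r>1$, I will take any $r\in[1,\min_i p_i)$ instead. The argument has three steps, mirroring the proof of Theorem \ref{thm:mrs}.

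\textbf{Step 1: $X$ and $X^{\frac1r}$ are BBF spaces.} $L^{\vec{p}}$ is a BQBF space, as noted above. Since every $p_i>1$, Minkowski's inequality applied iteratively in each variable gives the triangle inequality. For any ball $B$, Hölder's inequality in each variable gives
\begin{align*}
\int_B|f|\le\|\mathbf 1_B\|_{L^{\vec{p}\,'}}\|f\|_{L^{\vec{p}}}.
\end{align*}
The same holds for $X^{\frac1r}=L^{\vec{p}/r}$ whenever $r<\min_i p_i$, so in particular for $r=1$.

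\textbf{Step 2: $X'$ has an absolutely continuous norm.} By Benedek--Panzone \cite{bp61}, $(L^{\vec{p}})'=L^{\vec{p}\,'}$, where $\vec{p}\,':=(p_1',\ldots,p_N')\in(1,\infty)^N$. Absolute continuity of the norm of $L^{\vec{p}\,'}$ then follows from the dominated convergence theorem applied iteratively from the innermost integral outward. Concretely, if $\mathbf 1_{E_j}\to0$ almost everywhere, the inner integrals $\int|g|^{p_1'}\mathbf 1_{E_j}\,dx_1$ tend to $0$ for almost every $(x_2,\ldots,x_N)$, since they are dominated by an integrable function, and the same reasoning passes through each successive layer. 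Alternatively, one can use Lemma \ref{xingxing} together with the fact $(L^{\vec{p}\,'})^\ast=L^{\vec{p}}$ from \cite[Theorem 1]{bp61}.

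\textbf{Step 3: $\mathcal M$ is bounded on $(X^{\frac1r})'=L^{(\vec{p}/r)'}$.} This is the main obstacle. The mixed-norm space is not rearrangement- or rotation-invariant, so $\mathcal M$ cannot be handled directly by one-dimensional arguments. I would first use the pointwise bound
\begin{align*}
\mathcal Mf\lesssim\mathcal M_N\circ\cdots\circ\mathcal M_1 f,
\end{align*}
which holds because $\mathcal M$ is dominated by the strong maximal operator, and the strong maximal operator is dominated by the composition of the one-dimensional maximal operators $\mathcal M_i$ acting in the $i$-th variable. Each $\mathcal M_i$ is then bounded on the mixed-norm space with all exponents in $(1,\infty)$. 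This requires the vector-valued Fefferman--Stein inequality in the inner variables, i.e. a bound of the form $L^{p_i}(\ell^{\vec{p}_{<i}})$ for the $i$-th maximal operator, as in \cite{bp61} and the mixed-norm literature used in \cite[Theorem 5.6]{dlyyz23}. Since the exponent vector $(\vec{p}/r)'$ lies in $(1,\infty)^N$, this yields the required boundedness. I would cite this result from the mixed-norm literature rather than reprove it.

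Once Steps 1--3 are established, all hypotheses of Theorems \ref{thm-domain} and \ref{thm:frac-poincare} hold with $X:=L^{\vec{p}}$, and both conclusions follow immediately.
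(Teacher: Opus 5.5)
Your proposal is correct and follows essentially the same route as the paper, which simply verifies the hypotheses of Theorems \ref{thm-domain} and \ref{thm:frac-poincare} for $X:=L^{\vec{p}}$ as in \cite[Theorem 5.6]{dlyyz23} and then applies those theorems; like you, it uses $r=1$, the Benedek--Panzone duality $(L^{\vec{p}})'=L^{\vec{p}\,'}$, absolute continuity of the norm of $L^{\vec{p}\,'}$, and boundedness of $\mathcal{M}$ on mixed-norm spaces with exponents in $(1,\infty)^N$. One notational slip in Step 3: the inequality you need there is for the one-dimensional maximal operator on $L^{q_i}(L^{(q_1,\ldots,q_{i-1})})$ with $\vec{q}:=(\vec{p}/r)'$, a Bagby-type mixed-norm-valued bound, not an $\ell^{\vec{p}_{<i}}$-valued bound with the original exponents.
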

\begin{remark}
To the best of our knowledge, Theorem  \ref{thm:ms} is  new.
\end{remark}
\subsection{Variable Lebesgue Spaces\label{vs}}
Let $p:\ \mathbb{R}^N\to[0,\infty)$ be a nonnegative
measurable function. Let
$$
\widetilde{p}_-:=\underset{x\in\mathbb{R}^N}{\mathrm{ess\,inf}}\,p(x)\ \text{and}\
\widetilde p_+:=\underset{x\in\mathbb{R}^N}{\mathrm{ess\,sup}}\,p(x).
$$
A function $p:\ \mathbb{R}^N\to[0,\infty)$ is said to be \emph{globally
log-H\"older continuous} if there exist
 $p_{\infty}\in\mathbb{R}$
and a positive constant $C$ such that, for any
$x,y\in\mathbb{R}^N$,
$$
|p(x)-p(y)|\le \frac{C}{\log(e+1/|x-y|)}\ \ \text{and}\ \
|p(x)-p_\infty|\le \frac{C}{\log(e+|x|)}.
$$
The \emph{variable Lebesgue space
$L^{p(\cdot)}$} associated with the function
$p:\ \mathbb{R}^N\to[0,\infty)$ is defined to be the set
of all  measurable functions $f$ on $\mathbb{R}^N$ with
the finite quasi-norm
$$
\|f\|_{L^{p(\cdot)}}:=\inf\left\{\lambda
\in(0,\infty):\ \int_{\mathbb{R}^N}\left[\frac{|f(x)|}
{\lambda}\right]^{p(x)}\,dx\le1\right\}.
$$
It is known that $L^{p(\cdot)}$ is a ball quasi-Banach function space
(see, for instance, \cite[Section 7.8]{shyy}). By an argument similar to that used in the proof of \cite[Theorem 5.11]{dlyyz23}
and applying Theorems \ref{thm-domain} and \ref{thm:frac-poincare}, we have the following conclusion; we omit the details here.
\begin{theorem}\label{thm:vs}
Let $p:\ \mathbb{R}^N\to[0,\infty)$ be globally
log-H\"older continuous with
$1< \widetilde{p}_-\leq  \widetilde{p}_+<\infty$. Then Theorems \ref{thm-domain} and
\ref{thm:frac-poincare}   hold with $X:=L^{p(\cdot)}.$
\end{theorem}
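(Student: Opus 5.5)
The plan is to check that $X:=L^{p(\cdot)}$ satisfies every hypothesis of Theorems \ref{thm-domain} and \ref{thm:frac-poincare}, and then apply those theorems directly. This follows the scheme of the proof of Theorem \ref{thm:mrs}. We take $r:=1$. The hypotheses to verify are three:
\begin{enumerate}
\item[(a)] $L^{p(\cdot)}$ is a ball Banach function space;
\item[(b)] its associate space $(L^{p(\cdot)})'$ has an absolutely continuous norm;
\item[(c)] the Hardy--Littlewood maximal operator $\mathcal{M}$ is bounded on $(L^{p(\cdot)})'$.
\end{enumerate}

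For (a), we use that $\widetilde p_-\in(1,\infty)$. The Luxemburg functional is then a genuine norm, so the triangle inequality holds. Properties (i)--(iv) of Definition \ref{Debqfs} follow from \cite[Section 7.8]{shyy}. The local integrability estimate $\int_B|f|\le C_{(B)}\|f\|_{L^{p(\cdot)}}$ comes from the variable-exponent H\"older inequality together with $\mathbf 1_B\in L^{p'(\cdot)}$.

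For (b) and (c), we first identify the associate space. Since $1<\widetilde p_-\le\widetilde p_+<\infty$, the standard duality theory of variable Lebesgue spaces gives $(L^{p(\cdot)})'=L^{p'(\cdot)}$ with equivalent norms, where $\frac1{p(\cdot)}+\frac1{p'(\cdot)}=1$. The conjugate exponent satisfies $1<\widetilde{(p')}_-\le\widetilde{(p')}_+<\infty$, because $\widetilde{(p')}_-=(\widetilde p_+)'$ and $\widetilde{(p')}_+=(\widetilde p_-)'$.
\begin{itemize}
\item Absolute continuity: since $\widetilde{(p')}_+<\infty$, the modular of $f\mathbf 1_{E_j}$ tends to $0$ by dominated convergence whenever $\mathbf 1_{E_j}\to0$ almost everywhere. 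Modular convergence is equivalent to norm convergence when the exponent is bounded, so $L^{p'(\cdot)}$ has an absolutely continuous norm.
\item Maximal operator: global log-H\"older continuity passes to $p'$. This holds because $|1/p(x)-1/p(y)|\le|p(x)-p(y)|/\widetilde p_-^{\,2}$, and $1/p'=1-1/p$ inherits the same modulus, with limit $p_\infty'$ at infinity. The exponent $p'$ is also bounded away from $1$. The classical Diening/Cruz-Uribe--Fiorenza--Neugebauer theorem then shows that $\mathcal{M}$ is bounded on $L^{p'(\cdot)}$.
\end{itemize}
This is the only step requiring real input, and it is exactly what the proof of \cite[Theorem 5.11]{dlyyz23} provides. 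We would cite that argument rather than reprove it.

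With (a)--(c) established, all assumptions of Theorems \ref{thm-domain} and \ref{thm:frac-poincare} hold with $X:=L^{p(\cdot)}$ and $r=1$. Note that $X^{1/r}=X$ is trivially a BBF space in this case. Both conclusions therefore follow. The main obstacle is purely bookkeeping: confirming that the associate space coincides isometrically, or up to equivalent norms, with $L^{p'(\cdot)}$. An equivalence of norms suffices, since boundedness of $\mathcal{M}$ and absolute continuity of the norm are both invariant under equivalent renormings.
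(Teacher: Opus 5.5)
Your proposal is correct and follows the paper's route. The paper only says that, by an argument similar to that in \cite[Theorem 5.11]{dlyyz23}, the hypotheses of Theorems \ref{thm-domain} and \ref{thm:frac-poincare} hold for $X:=L^{p(\cdot)}$, and it omits the details; your choice $r=1$, the identification $(L^{p(\cdot)})'=L^{p'(\cdot)}$ up to equivalent norms, the absolute continuity of that norm since $\widetilde{(p')}_+<\infty$, and the boundedness of $\mathcal{M}$ on $L^{p'(\cdot)}$ via the log-H\"older continuity of $p'$ supply exactly those omitted details.
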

\begin{remark}
To the best of our knowledge, Theorem  \ref{thm:vs} is  new.
\end{remark}
\subsection{Orlicz Spaces}\label{os}
Recall that a non-decreasing function $\Phi:\ [0,\infty)\to[0,\infty)$
is called an \emph{Orlicz function} if it
satisfies $\Phi(0)= 0$, $\Phi(t)>0$ for any $t\in(0,\infty)$,
and $\lim_{t\to\infty}\Phi(t)=\infty$.
Moreover, $\Phi$ is said to be
of \emph{upper} (resp. \emph{lower})
\emph{type} $p$ with $p\in(-\infty,\infty)$ if
there exists a positive constant $C_{p}$, depending on $p$,
such that, for any $t\in[0,\infty)$
and $s\in[1,\infty)$ [resp. $s\in (0,1)$],
$
\Phi(st)\le C_{p}s^p \Phi(t).
$
Then we present the concept of Orlicz spaces;
see, for instance, \cite[p.\,58, Definition 2]{rr}.

\begin{definition}
Let $\Phi$ be an Orlicz function with lower type
$p_{\Phi}^-\in(0,\infty)$ and upper type $p_{\Phi}^+\in(0,\infty)$.
The \emph{Orlicz space $L^\Phi$} is defined
to be the set of all measurable
functions $f$ on $\mathbb{R}^n$ such that
$$\|f\|_{L^\Phi}
:=\inf\left\{\lambda\in(0,\infty):\ \int_{\mathbb{R}^N}
\Phi\left(\frac{|f(x)|}{\lambda}\right)\,dx\le1\right\}$$
is finite.
\end{definition}
It is easy to prove that the Orlicz space $L^\Phi$
is a quasi-Banach
function space and hence
a ball quasi-Banach
function space (see \cite[Subsection 7.6]{shyy}).
By an argument similar to that used in the proof of \cite[Theorem 5.20]{dlyyz23}
and applying Theorems \ref{thm-domain} and \ref{thm:frac-poincare}, we have the following conclusion; we omit the details here.
\begin{theorem}\label{thm:os}
Let $\Phi$ be an Orlicz function with positive lower
type $p_{\Phi}^-\in(1,\infty)$.
Then Theorems \ref{thm-domain} and \ref{thm:frac-poincare}
hold with $X:=L^\Phi.$
\end{theorem}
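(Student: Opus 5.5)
The plan is to verify the three hypotheses of Theorems \ref{thm-domain} and \ref{thm:frac-poincare} for $X:=L^\Phi$, in the same way as the proofs of Theorems \ref{thm:weighted-poincare} and \ref{thm:mrs}, and then apply those theorems directly. The hypotheses are:
\begin{enumerate}
\item[(a)] $X$ and $X^{1/r}$ are BBF spaces for some $r\in[1,\infty)$;
\item[(b)] $X'$ has an absolutely continuous norm;
\item[(c)] $\mathcal{M}$ is bounded on $(X^{1/r})'$.
\end{enumerate}

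For (a) and (c), I would fix $r\in(1,p_\Phi^-)$. Then $(L^\Phi)^{1/r}=L^{\Phi_r}$ with $\Phi_r(t):=\Phi(t^{1/r})$, which has lower type $p_\Phi^-/r>1$ and upper type $p_\Phi^+/r$. Up to an equivalent convex Orlicz function, $L^\Phi$ and $L^{\Phi_r}$ are Banach function spaces in the sense of \cite{bs88}, and hence BBF spaces. The argument in \cite[Theorem 5.20]{dlyyz23} should supply this, since a function of lower type $>1$ is equivalent to a convex one. The associate space $(L^{\Phi_r})'$ is the Orlicz space $L^{\Psi}$, where $\Psi$ is the complementary function of $\Phi_r$. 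Because $\Phi_r$ has lower type $>1$, the function $\Psi$ satisfies the $\Delta_2$ condition and has finite upper type. Because $\Phi_r$ has finite upper type, $\Psi$ has lower type $>1$. The boundedness of $\mathcal{M}$ on $L^\Psi$ then follows from the known characterization that $\mathcal{M}$ is bounded on $L^\Psi$ whenever $\Psi$ has lower type $>1$, as in \cite{dlyyz23}. This gives (c).

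For (b), the space $X'=(L^\Phi)'$ is the Orlicz space of the complementary function $\widetilde\Phi$ of $\Phi$. Since $\Phi$ has finite upper type $p_\Phi^+$, the function $\widetilde\Phi$ satisfies $\Delta_2$. An Orlicz space with a $\Delta_2$ Young function has absolutely continuous norm, by dominated convergence applied to the modular. Alternatively, one could invoke Lemma \ref{xingxing} through $(L^{\widetilde\Phi})^\ast=L^\Phi$. Once (a)--(c) hold, Theorems \ref{thm-domain} and \ref{thm:frac-poincare} apply verbatim.

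The main obstacle is the bookkeeping of types under complementation and convexification, in particular ensuring that the complementary functions have lower type $>1$ and satisfy $\Delta_2$. The upper type of $\Phi$ is implicitly finite by the definition of $L^\Phi$ used here, and I would rely on the corresponding lemma used in \cite[Theorem 5.20]{dlyyz23} for this step.
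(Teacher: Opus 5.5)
Your plan matches the paper's, which only says to verify the hypotheses of Theorems \ref{thm-domain} and \ref{thm:frac-poincare} for $L^\Phi$ as in \cite[Theorem 5.20]{dlyyz23} and then apply them. Your treatment of (a) and (c) is fine. In (c) you pair the indices correctly: lower type $>1$ of $\Phi_r$ gives $\Psi\in\Delta_2$, and finite upper type of $\Phi_r$ gives $\Psi$ lower type $>1$, hence $\mathcal M$ is bounded on $L^\Psi$.

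Step (b), however, rests on a false implication. Finite upper type of $\Phi$ (that is, $\Phi\in\Delta_2$) gives $\widetilde\Phi\in\nabla_2$, meaning $\widetilde\Phi$ has lower type $(p_\Phi^+)'>1$. It does not give $\widetilde\Phi\in\Delta_2$. For example, $\Phi(t)=t\log(e+t)$ has finite upper type, but $\widetilde\Phi(s)\approx e^{s-1}$ for large $s$, so $L^{\widetilde\Phi}$ is locally an $\exp L$ space. Its norm is not absolutely continuous: take $f:=\log(1/|\cdot|)\mathbf 1_{B(\mathbf 0,1/2)}$ and $E_j:=B(\mathbf 0,1/j)$; then $\|f\mathbf 1_{E_j}\|\geq 1/N$ for every $j$.

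What actually gives $\widetilde\Phi\in\Delta_2$ is the hypothesis $p_\Phi^->1$, which makes $\widetilde\Phi$ of upper type $(p_\Phi^-)'<\infty$. With that, your dominated-convergence argument on the modular does show that $X'=L^{\widetilde\Phi}$ has absolutely continuous norm. The alternative via Lemma \ref{xingxing} does not bypass this, since $(L^{\widetilde\Phi})^\ast=L^\Phi$ is itself a consequence of $\widetilde\Phi\in\Delta_2$. So the absolute continuity in (b) comes from $p_\Phi^->1$, and the maximal bound in (c) comes from $p_\Phi^+<\infty$, not the other way round.

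A minor further point concerns the case where $\Phi$ is only equivalent to a convex function. Passing to the equivalent norm preserves the compactness conclusion. It does not preserve the sharp constant $B_{q,N}$ or the threshold $C_{X,\Omega}B_{q,N}$; these change by the equivalence constants. The paper is equally silent on this.
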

\begin{remark}
To the best of our knowledge, Theorem  \ref{thm:os} is  new.
\end{remark}
\subsection{Orlicz-Slice  Spaces\label{oss}}
\begin{definition}
Let $\Phi: [0,\infty)\to [0,\infty)$
be an Orlicz function with positive
lower type $p_{\Phi}^-$ and positive upper
type $p_{\Phi}^+$. For any given $t,p\in(0,\infty)$,
the \emph{Orlicz-slice space}
$(E_\Phi^p)_t$ is defined to be the set of all
measurable functions $f$ on $\mathbb{R}^N$ with the finite
quasi-norm
$$
\|f\|_{(E_\Phi^p)_t} :=\left\{\int_{\mathbb{R}^N}
\left[\frac{\|f\mathbf{1}_{B(x,t)}\|_{L^\Phi}}
{\|\mathbf{1}_{B(x,t)}\|_{L^\Phi}}\right]
^p\,dx\right\}^{\frac{1}{p}}.
$$
\end{definition}
According to
both \cite[Lemma 2.28]{zyyw} and \cite[Remark 7.41(i)]{zwyy},
the Orlicz-slice space $(E_\Phi^p)_t$ is a
ball Banach function space, but in general is not a
Banach function space.
The Orlicz-slice spaces were introduced in
\cite{zyyw} as a generalization of
both the slice space of	Auscher and Mourgoglou
\cite{AM2014,APA} and the Wiener amalgam space
in \cite{knt,h19}.  By an argument similar to that used in the proof of \cite[Theorem 5.25]{dlyyz23}
and applying Theorems \ref{thm-domain} and \ref{thm:frac-poincare}, we have the following conclusion; we omit the details here.
\begin{theorem}\label{thm:oss}
Let $t\in (0,\infty)$, $p\in(1,\infty)$, and $\Phi$ be an Orlicz function with positive lower
type $p_{\Phi}^-\in (1,\infty)$.
Then Theorems \ref{thm-domain} and
 \ref{thm:frac-poincare} hold with $X:=(E_\Phi^p)_t.$
\end{theorem}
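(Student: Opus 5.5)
The plan is to verify the hypotheses of Theorems \ref{thm-domain} and \ref{thm:frac-poincare} for $X:=(E_\Phi^p)_t$. These hypotheses are: $X$ and $X^{\frac1r}$ are BBF spaces for a suitable $r\in[1,\infty)$, $X'$ has an absolutely continuous norm, and $\mathcal{M}$ is bounded on $(X^{\frac1r})'$. Once they are checked, both theorems apply verbatim. As in the preceding subsections, I will mirror the argument of \cite[Theorem 5.25]{dlyyz23}.

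\textbf{Choice of $r$.} Since $p,p_\Phi^-\in(1,\infty)$, I fix $r\in(1,\min\{p,p_\Phi^-\})$. Then the convexification satisfies
\[
\bigl[(E_\Phi^p)_t\bigr]^{\frac1r}=(E_{\Phi_r}^{p/r})_t,\qquad \Phi_r(\tau):=\Phi(\tau^{\frac1r}),
\]
and $\Phi_r$ has lower type $p_\Phi^-/r>1$ while $p/r>1$. By \cite[Lemma 2.28]{zyyw}, both $X$ and $X^{\frac1r}$ are then BBF spaces.

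\textbf{Maximal operator on the associate space.} By \cite{zyyw}, the associate space of an Orlicz-slice space is again an Orlicz-slice space: $[(E_{\Phi_r}^{p/r})_t]'$ is equivalent to $(E_{\Psi}^{(p/r)'})_t$, where $\Psi$ is the complementary function of $\Phi_r$. I will then use the known boundedness of $\mathcal{M}$ on Orlicz-slice spaces whose exponent lies in $(1,\infty)$ and whose Orlicz function has lower type greater than $1$ \cite{zyyw}. The lower type of $\Psi$ is governed by the upper type of $\Phi_r$; if that type is only finite, $\Psi$ might fail the lower-type condition. The plan is to use the finiteness of the upper type $p_\Phi^+$, shrinking $r$ if necessary, exactly as in \cite{dlyyz23}.

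\textbf{Absolute continuity of $X'$.} Here $X'=(E_{\Phi^*}^{p'})_t$, where $\Phi^*$ is the complementary function of $\Phi$. I will show directly that this space has an absolutely continuous norm, using dominated convergence twice. Fix $f\in X'$ and sets $E_j$ with $\mathbf{1}_{E_j}\to0$ almost everywhere.
\begin{itemize}
\item[(a)] For each $x$, the quantity $\|f\mathbf{1}_{E_j}\mathbf{1}_{B(x,t)}\|_{L^{\Phi^*}}$ tends to $0$. This uses the $\Delta_2$ property of $\Phi^*$, which follows from the finite upper type of $\Phi^*$, itself a consequence of $p_\Phi^->1$.
\item[(b)] The integral in $x$ then tends to $0$ by dominated convergence, since the integrand is dominated by its value at $f$ and $p'<\infty$.
\end{itemize}
Equivalently, one can invoke Lemma \ref{xingxing} together with a duality identity $(X')^*=X$ from \cite{zyyw}.

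\textbf{Main obstacle.} The delicate step is the identification of $(X^{\frac1r})'$ and the boundedness of $\mathcal{M}$ on it. The required type conditions on the complementary Orlicz function must hold, so the choice of $r$ has to be made carefully relative to $p$, $p_\Phi^-$ and $p_\Phi^+$. Once the hypotheses are in place, Theorems \ref{thm-domain} and \ref{thm:frac-poincare} yield the conclusion directly.
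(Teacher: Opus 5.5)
Your proposal is correct and follows the paper's route. The paper just verifies the hypotheses of Theorems \ref{thm-domain} and \ref{thm:frac-poincare} for $X:=(E_\Phi^p)_t$ via the argument of \cite[Theorem 5.25]{dlyyz23}: the BBF property of $X$ and $X^{1/r}$, the identification of the associate space with an Orlicz-slice space, absolute continuity of $X'$, and boundedness of $\mathcal{M}$ on $(X^{1/r})'$. It then applies those theorems.

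One correction to your maximal-operator paragraph: a finite upper type of $\Phi_r$ is exactly what gives its complementary function the lower type $(p_\Phi^+/r)'>1$, so there is no risk there and no need to shrink $r$. The strict inequality $r<\min\{p,p_\Phi^-\}$ is what matters, since it makes $(p_\Phi^-/r)'$ and $(p/r)'$ finite; the choice $r=1$ would also work.
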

\begin{remark}
To the best of our knowledge, Theorem  \ref{thm:oss} is  new.
\end{remark}

\medskip

\noindent\textbf{Funding}\quad This project is partially supported by the National
Natural Science Foundation of China (Grant Nos. 12431006, 12371093, and 12501118),
the Beijing Natural Science Foundation (Grant No. 1262011), and
the Fundamental Research Funds for the Central Universities
(Grant No. 2253200028).

\medskip

\noindent\textbf{Data Availability}\quad  Data sharing not applicable
to this article as no datasets were generated or analyzed during
the current study.

\section*{Declarations}

\noindent\textbf{Conflict of interest}\quad The authors declare
that they have no conflict of interest.

\bigskip

\noindent
Xiaosheng Lin

\smallskip

\noindent
School of Mathematical Sciences, Jimei University,
Xiamen 361005, The People's Republic of China

\smallskip

\noindent {\it E-mail}: \texttt{xslin@jmu.edu.cn}

\bigskip

\noindent Dachun Yang (Corresponding author), Wen Yuan and Yangyang Zhang

\smallskip

\noindent Laboratory of Mathematics and Complex Systems
(Ministry of Education of China),
School of Mathematical Sciences, Institute for Advanced Study,
Beijing Normal University,
Beijing 100875, The People's Republic of China

\smallskip

\noindent{\it E-mails:} \texttt{dcyang@bnu.edu.cn} (D. Yang)

\noindent\phantom{{\it E-mails:}} \texttt{wenyuan@bnu.edu.cn} (W. Yuan)

\noindent\phantom{{\it E-mails:}} \texttt{yangyzhang@bnu.edu.cn} (Y. Zhang)

\end{document}